\documentclass[reqno]{amsart}
\usepackage{amsfonts,amssymb,amsmath,latexsym,indentfirst,cite}
\usepackage{amsthm}
\usepackage{color,graphicx}
\usepackage{eucal}
\usepackage{verbatim}


\newcommand{\vertiii}[1]{{\left\vert\kern-0.25ex\left\vert\kern-0.25ex\left\vert #1 
    \right\vert\kern-0.25ex\right\vert\kern-0.25ex\right\vert}}
\newcommand{\R}{\mathbb R}

\newcommand{\N}{\mathbb N}


\newcommand{\peq}{\hspace*{0.10in}}
\newcommand{\peqq}{\hspace*{0.05in}}


\newtheorem{theorem}{Theorem}[section]
\newtheorem{proposition}[theorem]{Proposition}
\newtheorem{lemma}[theorem]{Lemma}
\newtheorem{corollary}[theorem]{Corollary}
\newtheorem{definition}[theorem]{Definition}

\theoremstyle{definition}
\newtheorem{remark}[theorem]{Remark}

\begin{document}

\title[Nonlinear Profile Decomposition]{Nonlinear Profile Decomposition and the Concentration Phenomenon for Supercritical Generalized KdV Equations}
\author[L. Farah]{Luiz G. Farah}
\address{Universidade Federal de Minas Gerais\\lgfarah@gmail.com}
\thanks{L.G.F. was partially supported by a CNPq-Brazil and FAPEMIG-Brazil.}

\author[B. Pigott]{Brian Pigott} 
\address{Wofford College\\pigottbj@wofford.edu}
\thanks{B.P. was partially supported by a Wofford College Faculty Development Grant.}



\begin{abstract}
A nonlinear profile decomposition is established for solutions of supercritical generalized Korteweg-de Vries equations. As a consequence, we obtain a concentration result for finite time blow-up solutions that are of Type II.
\end{abstract}

\maketitle

\section{Introduction}

We consider the initial value problem for the supercritical generalized Korteweg-de Vries (gKdV) equation
\begin{equation}
\label{gkdv}
\left \{ \begin{array}{l l}
\partial_{t} u + \partial_{x}^{3} u + \partial_{x} (u^{k+1}) = 0, & x \in \R, \ t>0,\\
u(x,0) = u_{0}(x),
\end{array}
\right .
\end{equation}
where $k > 4$ is an integer. In the particular case when $k =1$, this equation was derived by Korteweg and de Vries \cite{KdV} to model long waves on a shallow rectangular canal. In the present work we are primarily interested in the case when $k > 4$, the so-called \emph{$L^{2}$-supercritical} case, which is a generalization of the model derived in \cite{KdV}.

We recall that the gKdV equation \eqref{gkdv} has the following scaling symmetry: if $u(x,t)$ solves \eqref{gkdv} with initial data $u(x,t) = u_{0}(x)$, then for any $\lambda > 0$ the function $u_{\lambda}(x,t)$ given by
\begin{equation}
\label{scaling}
u_{\lambda}(x,t) = \lambda^{2/k} u(\lambda x, \lambda^{3} t)
\end{equation}
is also a solution of \eqref{gkdv} with initial data $u_{0,\lambda}(x) = u_{0}(\lambda x)$. A simple calculation reveals that
\begin{equation*}
\| u_{\lambda} \|_{\dot{H}^{s}} = \lambda^{s + \frac{2}{k} - \frac{1}{2}} \| u \|_{\dot{H}^{s}}.
\end{equation*}
We note that if $s_{k} = \frac{1}{2} - \frac{2}{k} = (k-4)/2k$, then the homogeneous Sobolev space $\dot{H}^{s_{k}}(\R)$ is invariant under the scaling symmetry. In particular, if $k = 4$, we find that the scale invariant Sobolev space is $\dot{H}^{0}(\R) = L^{2}(\R)$; we refer to the problem \eqref{gkdv} with $k=4$ as the $L^{2}$-critical (or simply critical) problem. When $k > 4$ we note that $s_{k} > 0$; these problems are referred to as $L^{2}$-supercritical (or simply supercritical). (In light of the mass and energy conservation -- see \eqref{mass} and \eqref{energy} below -- these problems are also referred to as mass-supercritical and energy-subcritical.)  Thanks to the scaling structure it is natural to study \eqref{gkdv} in the Sobolev spaces $H^{s}(\R)$ with $s \geq s_{k}$.

The local well-posedness theory for \eqref{gkdv} with $k \geq 4$ is well-understood. Kenig, Ponce, and Vega \cite{kpv1} showed that the equation is locally well-posed in the Sobolev space $H^{s}(\R)$ with $s \geq s_{k}$. Local well-posedness in the critical space $\dot{H}^{s_{k}}(\R)$ is more delicate with the length of the interval of existence depending on $u_{0} \in \dot{H}^{s_{k}}(\R)$, rather than on $\| u_{0} \|_{\dot{H}^{s_{k}}}$ only, see \cite{kpv1}. Farah and Pastor also investigated local well-posedness for \eqref{gkdv} with $k >4$ in \cite{FP13} and developed an alternative proof to the argument given in \cite{kpv1}. These results are reviewed below in Section \ref{lwpreview}, see Theorem \ref{local} and Corollary \ref{smallglobal}. The corresponding global theory in $H^1(\R)$ for \eqref{gkdv} with $k \geq 4$ is also well-understood. For the critical gKdV equation ($k=4$), a combination of the results in \cite{kpv1} and the sharp version of a Gagliardo-Nirenberg inequality proved by Weinstein \cite{W83} imply global well-posedness for intial data $u_0 \in H^1(\R)$ satisfying
\begin{equation}\label{SA}
\|u_0\|_{L^2} < \|Q\|_{L^2}, 
\end{equation}
where $Q$ is the unique positive radial solution of the elliptic equation 
\begin{equation}\label{GSE}
Q^{''} - Q + Q^{5} = 0.
\end{equation}
This result was extended for $k>4$ by Farah, Linares, and Pastor \cite{FLP} (see also Holmer and Roudenko \cite{HR08} for the corresponding result in the case of NLS equation), where they proved that the initial value problem is globally well-posed for $u_{0} \in H^{1}(\R)$ provided
\begin{equation*}
\mathcal{E}[u_{0}]^{s_{k}} \mathcal{M}[u_{0}]^{1-s_{k}} < \mathcal{E}[Q]^{s_{k}} \mathcal{M}[Q]^{1-s_{k}}, \qquad \mathcal{E}[u_{0}] \geq 0,
\end{equation*}
and
\begin{equation*}
\| \partial_{x} u_{0} \|_{L^{2}}^{s_{k}} \| u_{0} \|_{L^{2}}^{1-s_{k}} < \| \partial_{x} Q \|_{L^{2}}^{s_{k}} \| Q \|_{L^{2}}^{1-s_{k}},
\end{equation*}
where $\mathcal{M}$ and $\mathcal{E}$ refer to the mass and energy quantities conserved by the gKdV equation \eqref{gkdv} (see definitions \eqref{mass} and \eqref{energy} below). Also $Q$ stands for the unique positive radial solution of the elliptic equation 
\begin{equation*}
Q^{''} - Q + Q^{k+1} = 0.
\end{equation*}
It should be pointed out that in the defocusing case (replace the `$+$' sign in front of the nonlinear term with a `$-$' sign), the $I$-method can be used to prove that the initial value problem is globally well-posed for $u_{0} \in H^{s}(\R)$ for $s > 4(k-1)/5k$ without any smallness condition on the initial data; see \cite{FLP}. In the focusing case, the $I$-method can also be applied to the critical gKdV equation ($k=4$) under the smallness assumption \eqref{SA}, see for instance \cite{FLP1}, \cite{LG6} and \cite{miao}. In this paper we consider only the focusing case for \eqref{gkdv}.

The following two quantities are conserved by \eqref{gkdv}:
\begin{align}
\text{(Mass)} \quad \mathcal{M}[u(t)] &:= \int_{\R} \vert u(x,t) \vert^{2} dx = \int_{\R} \vert u_{0}(x) \vert^{2} dx = \mathcal{M}[u_{0}]  \label{mass} \\
\text{(Energy)} \quad \mathcal{E}[u(t)] &:= \int_{\R} \left ( u_{x}^{2}(x,t) - \frac{2}{k+2} u^{k+2}(x,t) \right ) dx \label{energy} \\
&= \int_{\R} \left ( u_{0x}^{2}(x) - \frac{2}{k+2} u_{0}^{k+2}(x) \right ) dx = \mathcal{E}[u_{0}] . \notag
\end{align}
In the critical case ($k=4$) we notice that the mass is invariant under the scaling symmetry and is a conserved quantity, while in the supercritical case ($k > 4$), the scale invariant Sobolev space no longer coincides with a conserved quantity. This presents a key difficulty in the analysis to come. 

Mass-supercritical problems have received a great deal of attention in recent years. A new strategy, the so-called concentration-compactness/rigidity method, developed by Kenig and Merle in \cite{KM2006} and \cite{KM2008} allowed the authors to prove sharp results on energy-critical problems for the nonlinear Schr\"{o}dinger equation and the nonlinear wave equation. 
Later these ideas were adapted to address mass-supercritical, energy-subcritical problems, see \cite{KM2010}, \cite{HR08}, \cite{DHR2008}, \cite{FXC11} and \cite{Guevara2014}, for instance. A detailed account of the ideas employed in these results is available in \cite{Kenig2015}.

A key ingredient in the concentration-compactness/rigidity method is the profile decomposition. The idea of this decomposition is connected to the concentration compactness method of P.L. Lions and the bubble decomposition for elliptic equations. For dispersive equations the profile decomposition was first introduced by Merle and Vega \cite{MV1998} and Bahouri and Gerard \cite{BaGe99} in the context of Schr\"odinger and wave equation, respectively.
The principle goal of this article is to establish a nonlinear profile decomposition result for the supercritical KdV equations.  We begin by recalling the following linear profile decomposition result proved by Farah and Versieux \cite{FV15}. (See also \cite{shaolpd} for related results when $k=4$.)

\begin{theorem} \label{Lprofdec}
 Let $k>4$, $s_k= (k-4)/2k$ and $\{\phi_n\}_{n\in \mathbb{N}}$ be a bounded sequence in $\dot{H}^{s_k}(\R)$. Set $v_n=V(t)\phi_n$, where $V(t)$ is the Airy evolution. Then there exists a subsequence, which we still denote by $\{v_n\}_{n\in \mathbb{N}}$, a sequence of functions $\{\psi^j\}_{j\in \mathbb{N}}\subset \dot{H}^{s_k}(\R)$ and sequences of parameters $(h_n^j, x_n^j, t_n^j)_{n\in \mathbb{N}, j\in \mathbb{N}}$  such that for every $J\geq1$ there exists $R^J_n\in \dot{H}^{s_k}(\R)$ satisfying
\begin{equation}\label{SUM}
v_n(x,t)=\sum_{j=1}^{J}\dfrac{1}{(h_n^j)^{2/k}}V\left(\dfrac{t-t_n^j}{(h_n^j)^{3}}\right)\psi^j\left(\dfrac{x-x_n^j}{h_n^j}\right) + V(t)R_n^J
\end{equation}
where the remainder sequence has the following asymptotic smallness property
\begin{equation}\label{LSPWNL}
\limsup_{n\rightarrow \infty} \|D_x^{1/p}V(t)R_n^J\|_{L^p_tL^{q}_x} \rightarrow 0,  \peqq \textrm{as} \peqq J\rightarrow \infty,
\end{equation}
for all $(p,q)$ satisfying the condition
\begin{equation*}
\frac{2}{p} + \frac{1}{q} = \frac{2}{k}.
\end{equation*}

Moreover the remainder also asymptotically vanishes in the Strichartz space $L_x^{5k/4}L^{5k/2}_t$, meaning that
\begin{equation}\label{LSPWNL2}
\limsup_{n\rightarrow \infty} \|V(t)R_n^J\|_{L_x^{5k/4}L^{5k/2}_t} \rightarrow 0,  \peqq \textrm{as} \peqq J\rightarrow \infty.
\end{equation}

Furthermore, the sequences of parameters have a pairwise divergence property: For all $1\leq i \neq j\leq J$, 
\begin{equation}\label{LXT}
\lim_{n\rightarrow \infty} \left|\dfrac{h_n^i}{h_n^j}\right|+\left|\dfrac{h_n^j}{h_n^i}\right|+\left|\dfrac{t_n^i-t_n^j}{(h_n^i)^{3}}\right|+\left|\dfrac{x_n^i-x_n^j}{h_n^i}\right|=\infty.
\end{equation}

Finally, for fixed $J\geq 1$, we have the asymptotic Pythagorean expansion
\begin{equation}\label{LPYTHA}
\|v_n(\cdot,0)\|^2_{\dot{H}^{s_k}(\R)}-\sum_{j=1}^{J}\|\psi^j\|^2_{\dot{H}^{s_k}(\R)} - \|R_n^J\|^2_{\dot{H}^{s_k}(\R)} \rightarrow 0,  \peqq \textrm{as} \peqq n\rightarrow \infty.
\end{equation}
\end{theorem}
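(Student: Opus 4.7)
The plan is to follow the Bahouri--Gerard/Keraani template for profile decompositions, adapted to the Airy evolution at the critical regularity $\dot H^{s_k}(\R)$. The core analytic ingredient is a \emph{refined Strichartz inequality}: if $\{\phi_n\}$ is bounded in $\dot H^{s_k}$ and $\limsup_n \|V(t)\phi_n\|_{L_x^{5k/4}L_t^{5k/2}} \geq \eta > 0$, then one can find parameters $(h_n, x_n, t_n)$ and a nontrivial weak limit $\psi \in \dot H^{s_k}$, obtained by renormalizing $\phi_n$ via the scaling/translation/time-translation symmetry compatible with \eqref{scaling}, whose $\dot H^{s_k}$ norm is at least a quantitative fraction of $\eta$. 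The hard part of the whole argument lies in establishing this refined inequality: the standard route is a Whitney decomposition of frequency space, a bilinear $L^2$ estimate for Airy waves with transverse dyadic frequency supports, and a pigeonholing that forces the mass to concentrate on a single dyadic cube in phase space. The cubic dispersion of $V(t)$ and the non-standard mixed norm $L_x^{5k/4}L_t^{5k/2}$ require adjustments relative to the Schr\"odinger analogue, and this step is the principal obstacle.

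With the refined inequality in hand, the profiles are extracted iteratively. One sets $\psi^1$ to be a profile obtained from $\phi_n$ as above, defines the first remainder $R_n^1$ by $\phi_n = (h_n^1)^{-2/k} V(-t_n^1/(h_n^1)^3)\psi^1((\cdot - x_n^1)/h_n^1) + R_n^1$, and checks that, once renormalized according to the step-$1$ parameters, $R_n^1$ converges weakly to zero in $\dot H^{s_k}$. The Pythagorean identity at step $1$ is then immediate from weak convergence and the invariance of $\|\cdot\|_{\dot H^{s_k}}$ under the symmetry group. If $\|V(t)R_n^1\|_{L_x^{5k/4}L_t^{5k/2}}$ does not tend to zero, one applies the refined inequality to $R_n^1$ to extract $\psi^2$ with parameters $(h_n^2, x_n^2, t_n^2)$, and iterates. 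The bound $\sum_j \|\psi^j\|_{\dot H^{s_k}}^2 \leq \limsup_n \|\phi_n\|_{\dot H^{s_k}}^2$ is preserved at every step and yields square-summability of the profile norms.

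The pairwise divergence property \eqref{LXT} is verified by contradiction: if for some $i<j$ at least one of the four ratios in \eqref{LXT} remained bounded along a subsequence, then after passing to a further subsequence the step-$j$ renormalization of $R_n^{j-1}$ would inherit a nontrivial weak limit originating from $\psi^i$, contradicting the weak convergence $R_n^i \rightharpoonup 0$ after step-$i$ renormalization. The asymptotic smallness \eqref{LSPWNL2} then follows from the quantitative form of the refined inequality: at each step either the Strichartz norm of $V(t)R_n^{J-1}$ is already small, or else one extracts a profile whose $\dot H^{s_k}$ norm is proportional to that Strichartz norm. Since $\sum_j \|\psi^j\|_{\dot H^{s_k}}^2$ converges, $\|\psi^J\|_{\dot H^{s_k}} \to 0$ as $J \to \infty$, which forces the Strichartz norm of $V(t)R_n^J$ to zero. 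Finally, the analogous decay \eqref{LSPWNL} for all admissible exponents $(p,q)$ with $\tfrac{2}{p} + \tfrac{1}{q} = \tfrac{2}{k}$ is obtained by interpolating this decay against the endpoint derivative Strichartz estimates together with the trivial $L^\infty_t \dot H^{s_k}_x$ bound coming from the unitarity of $V(t)$.
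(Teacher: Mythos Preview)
The paper does not actually prove Theorem~\ref{Lprofdec}. It is stated as a result of Farah and Versieux \cite{FV15} (see the sentence preceding the theorem: ``We begin by recalling the following linear profile decomposition result proved by Farah and Versieux \cite{FV15}''), and no argument for it appears anywhere in the present paper. So there is no proof here against which to compare your proposal.

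That said, your outline follows the standard Bahouri--G\'erard/Keraani template, which is exactly the scheme one expects \cite{FV15} to implement (and which \cite{shaolpd} carries out in the critical case $k=4$). Your sketch is accurate at the level of strategy, but you have correctly identified that the entire substance lies in the refined Strichartz inequality for the Airy evolution in the mixed norm $L_x^{5k/4}L_t^{5k/2}$, and you do not prove it---you only describe the ingredients (bilinear estimates for transverse frequency supports, Whitney decomposition, pigeonholing). Without that inequality actually established, the proposal is a plan rather than a proof. If your goal is to supply a self-contained argument, that step must be filled in; if your goal is only to match the paper, note that the paper simply imports the result.
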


Before stating our main result we require the following definition.

\begin{definition} \label{NLP}
Let $\psi \in \dot{H}^{s_{k}}(\R)$ and $\{ t_{n} \}_{n \in \N}$ a sequence with $\lim_{n \to \infty} t_{n} = \overline{t} \in [-\infty, \infty]$. We say that $u(x,t)$ is a nonlinear profile associated with $(\psi, \{ t_{n} \}_{n \in \N})$ if there exists an interval $I = (a,b)$ with $\overline{t} \in I$ (if $\overline{t} = \pm \infty$, then $I = (a, +\infty)$ or $I = (-\infty, b)$, as appropriate) such that $u$ solves \eqref{gkdv} in $I$ and 
\begin{equation*}
\lim_{n \to \infty} \| u(\cdot, t_{n}) - V(t_{n}) \psi \|_{\dot{H}^{s_{k}}} = 0.
\end{equation*}
\end{definition}

The existence and uniqueness of nonlinear profiles is demonstrated in Section \ref{nlpd} (see Proposition \ref{existnlpd} and Remark \ref{4.2}). Our main result reads as follows.

\begin{theorem}[Nonlinear Profile Decomposition]
\label{Nprofdec}
 Assume $k>4$,  $s_k= (k-4)/2k$ and $\{\phi_n\}_{n\in \mathbb{N}}$ be a bounded sequence in $\dot{H}^{s_k}(\R)$. Let $\{\psi^j\}_{j\in \mathbb{N}}\subset \dot{H}^{s_k}(\R)$ and $(h_n^j, x_n^j, t_n^j)_{n\in \mathbb{N}, j\in \mathbb{N}}$ be, respectively, the sequence of functions and sequences of parameters given by Theorem \ref{Lprofdec} and $\{U^j\}_{j\in \mathbb{N}}$ the family of nonlinear profiles associated with $(\psi^j, \{-t^j_n/(h_n^j)^3\}_{n\in \N})_{j\in \N}$.
 
 Let $\{I_n\}_{n\in \mathbb{N}}$  be a sequence of intervals containing zero. The following statements are equivalents
 \begin{itemize}
 \item [$(i)$] 
 \begin{equation}\label{(i)}
 \lim_{n\rightarrow \infty} \left(\|D_x^{s_k}U_n^j\|_{L^{5}_xL^{10}_{I_n}}+\|U_n^j\|_{L_x^{5k/4}L^{5k/2}_{I_n}}\right) <\infty \quad \textrm{for every} \quad j\geq 1.
 \end{equation} 
 \item [$(ii)$] 
 \begin{equation}\label{(ii)}
 \lim_{n\rightarrow \infty}  \left(\|D_x^{s_k}u_n\|_{L^{5}_xL^{10}_{I_n}} +\|u_n\|_{L_x^{5k/4}L^{5k/2}_{I_n}}\right)< \infty,
 \end{equation} 
 \end{itemize}
 where 
 \begin{equation}\label{Unj}
 U_n^j(x,t)=\dfrac{1}{(h_n^j)^{2/k}}U^j\left(\dfrac{x-x_n^j}{h_n^j}, \dfrac{t-t_n^j}{(h_n^j)^{3}}\right)
 \end{equation}
 and
 $u_n$ is the solution to the gKdV equation \eqref{gkdv} with Cauchy data $\phi_n$ at $t=0$.
 
 Moreover, if \eqref{(i)} or \eqref{(ii)} holds, then (up to a subsequence) for every $J\geq1$
 \begin{equation}\label{Decomp}
u_n=\sum_{j=1}^{J}U_n^j + V(t)R_n^J + r_n^J,
\end{equation}
 with
 \begin{equation}\label{NSPWNL}
\limsup_{n\rightarrow \infty} \left(\|r_n^J\|_{L^{\infty}_{I_n}\dot{H}_x^{s_k}}+\|D_x^{s_k}r_n^J\|_{L^{5}_xL^{10}_{I_n}}+\|r_n^J\|_{L_x^{5k/4}L^{5k/2}_{I_n}}\right) \rightarrow 0,  \peqq \textrm{as} \peqq J\rightarrow \infty,
\end{equation}
where $\{R^J_n\}_{n,J\in\mathbb{N}}\subset \dot{H}^{s_k}(\R)$ are as Theorem \ref{Lprofdec}.
\end{theorem}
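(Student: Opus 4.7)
The plan is to prove (ii) $\Rightarrow$ (i) by extracting each individual profile from $u_n$ via the scaling group, and to prove (i) $\Rightarrow$ (ii) together with the decomposition \eqref{Decomp} by constructing an approximate solution from the nonlinear profiles plus the linear remainder, and then invoking a long-time perturbation lemma for \eqref{gkdv}.

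For (ii) $\Rightarrow$ (i), fix $j$ and consider the rescaled function $\widetilde{u}_n(x,t) := (h_n^j)^{2/k} u_n(h_n^j x + x_n^j,(h_n^j)^3 t + t_n^j)$. By the scaling symmetry \eqref{scaling} this is itself a solution of \eqref{gkdv}; its initial data at time $-t_n^j/(h_n^j)^3$ agrees with the $j$-th piece of \eqref{SUM} up to the error produced by the other profiles and the linear remainder, both of which tend to zero in $\dot H^{s_k}$ in the $\dot H^{s_k}$-inner product with $V(t)\psi^j$-type data thanks to \eqref{LXT} and \eqref{LSPWNL}. By uniqueness of the nonlinear profile (Proposition \ref{existnlpd}/Remark \ref{4.2}), $\widetilde{u}_n$ and $U^j$ therefore agree asymptotically. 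The Strichartz norms appearing in \eqref{(i)} and \eqref{(ii)} are invariant under the scaling that links $u_n$ to $\widetilde{u}_n$ and $U^j$ to $U_n^j$, so boundedness for $u_n$ on $I_n$ transfers to boundedness for $U_n^j$ on $I_n$.

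For (i) $\Rightarrow$ (ii), first observe that by the asymptotic Pythagorean expansion \eqref{LPYTHA}, $\sum_j \|\psi^j\|_{\dot H^{s_k}}^2$ converges, so only finitely many profiles have $\dot H^{s_k}$ norm above the small-data threshold of Corollary \ref{smallglobal}. For the remaining (tail) profiles, their $U^j$ exist globally with small Strichartz norm controlled by $\|\psi^j\|_{\dot H^{s_k}}$, so the hypothesis \eqref{(i)} is automatic and the orthogonality of the $(h_n^j,x_n^j,t_n^j)$ will allow us to sum all $j$'s. Define the approximate solution
\begin{equation*}
\widetilde u_n^J(x,t) := \sum_{j=1}^{J} U_n^j(x,t) + V(t) R_n^J(x),
\end{equation*}
and use the pairwise divergence \eqref{LXT} together with a standard change-of-variables argument to prove the asymptotic orthogonality
\begin{equation*}
\Bigl\| D_x^{s_k} \widetilde u_n^J \Bigr\|_{L^5_x L^{10}_{I_n}}^{5} \sim \sum_{j=1}^J \|D_x^{s_k} U_n^j\|_{L^5_x L^{10}_{I_n}}^5 + \|D_x^{s_k} V(t) R_n^J\|_{L^5_x L^{10}_{I_n}}^5 + o(1)
\end{equation*}
(and analogously for $L_x^{5k/4}L_t^{5k/2}$). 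Combined with \eqref{(i)}, \eqref{LSPWNL}, \eqref{LSPWNL2}, this yields uniform Strichartz control on $\widetilde u_n^J$.

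Next, compute the residual
\begin{equation*}
e_n^J := \partial_t \widetilde u_n^J + \partial_x^3 \widetilde u_n^J + \partial_x\bigl( (\widetilde u_n^J)^{k+1} \bigr) = \partial_x\Bigl[ (\widetilde u_n^J)^{k+1} - \sum_{j=1}^{J} (U_n^j)^{k+1} \Bigr],
\end{equation*}
since each $U_n^j$ solves \eqref{gkdv} and $V(t)R_n^J$ solves the linear Airy equation. Expanding the $(k+1)$-th power produces cross terms of two types: products of $U_n^{j_1}\cdots U_n^{j_{k+1}}$ with at least two distinct indices, which vanish in the dual Strichartz norm as $n\to\infty$ by \eqref{LXT}; and terms involving at least one factor of $V(t)R_n^J$, which are controlled by H\"older's inequality and the smallness estimates \eqref{LSPWNL}--\eqref{LSPWNL2} as $J\to\infty$. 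Hence $e_n^J$ is small in the dual Strichartz norm in the double limit $n\to\infty$, $J\to\infty$. Since $u_n(0) - \widetilde u_n^J(0) \to 0$ in $\dot H^{s_k}$ as $n\to\infty$ by the definition of the linear decomposition, the long-time perturbation theorem for \eqref{gkdv} (the analogue in the present setting of the stability result used to build the local theory in \cite{FP13,kpv1}) yields $r_n^J := u_n - \widetilde u_n^J$ satisfying \eqref{NSPWNL}, and also supplies (ii).

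The main obstacle will be the bookkeeping for the residual $e_n^J$: one must track all $\binom{k+2}{\cdot}$ cross terms, carefully rescale each to one profile's frame, and exploit \eqref{LXT} to push the other factors into zero in mixed Lebesgue spaces, and this must be done with constants uniform in $J$ so that the perturbation lemma can be iterated as $J\to\infty$. A subsidiary technical point is that the intervals $I_n$ depend on $n$, so the perturbation lemma must be applied on $I_n$ with bounds independent of $n$, which is why the orthogonality argument for the Strichartz norms of $\widetilde u_n^J$ must be carried out on $I_n$ rather than on all of $\R$.
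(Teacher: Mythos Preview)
Your plan for (i) $\Rightarrow$ (ii) is essentially the paper's argument. The paper likewise builds the approximate solution $\sum_{j\leq J} U_n^j + V(t)R_n^J$, splits the residual nonlinearity into cross-terms among distinct profiles and terms carrying at least one factor of $V(t)R_n^J$, and shows both vanish in the dual norm $L^1_xL^2_t$ (this is Lemma \ref{LimInJ}). The ``long-time perturbation lemma'' you invoke is not stated separately in the paper; instead it is proved inline by partitioning $I_n$ into finitely many subintervals on which $\sum_j U_n^j$ has small Strichartz norm (Lemma \ref{PartIn}) and then running a bootstrap on each piece. Your remark about needing constants uniform in $n$ and $J$ corresponds exactly to this partition step.

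Your argument for (ii) $\Rightarrow$ (i), however, has a genuine gap. After rescaling $u_n$ to the $j$-th frame you get
\[
\widetilde u_n\bigl(\cdot,-t_n^j/(h_n^j)^3\bigr) \;=\; V\bigl(-t_n^j/(h_n^j)^3\bigr)\psi^j \;+\; (\text{other rescaled profiles}) \;+\; (\text{rescaled remainder}),
\]
and you yourself note that the extra pieces tend to zero only ``in the $\dot H^{s_k}$-inner product,'' i.e.\ \emph{weakly}. But the uniqueness statement you cite (Definition \ref{NLP} and Remark \ref{4.2}) characterizes the nonlinear profile by the \emph{strong} convergence $\|u(\cdot,t_n)-V(t_n)\psi\|_{\dot H^{s_k}}\to 0$. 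With only weak convergence of data you cannot conclude that $\widetilde u_n$ and $U^j$ agree asymptotically in any norm; passing weak limits through the nonlinear flow is precisely the content of the profile decomposition and cannot be taken as an input here. Since the other profiles and the remainder each carry a fixed positive amount of $\dot H^{s_k}$ norm, the error is not small strongly and the argument breaks down.

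The paper instead handles (ii) $\Rightarrow$ (i) by contradiction: assuming (ii) but that (i) fails at a minimal index $j_0$, it invokes the decomposition \eqref{Decomp} (established in Step 1 under (i), applied on subintervals where (i) does hold for all $j$, including $j_0$, with the $j_0$-norm large but finite) to force $\|u_n\|_{L_x^{5k/4}L^{5k/2}_{I_n}}\to\infty$, contradicting (ii). You should replace your rescaling argument with this contradiction scheme.
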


As an application of the nonlinear profile decomposition Theorem \ref{Nprofdec} we now prove a concentration result for blow-up solutions of \eqref{gkdv} with $k > 4$.

In the case of the critical problem $k=4$, it is known that the maximum time of existence may be finite \cite{MM02}. More precisely, there exists $u_{0} \in H^{1}(\R)$ such that the corresponding solution $u(x,t)$ of \eqref{gkdv} with $k =4$ blows up at finite time $T^{\ast}$:
\begin{equation*}
\lim_{t \uparrow T^{\ast}} \| u(t) \|_{H^{1}} = \infty.
\end{equation*}
Recent work of Martel, Merle, and Rapha\"{e}l \cite{MMR1}, \cite{MMR2}, \cite{MMR3} offers an updated perspective on these results .
Although it is expected that the supercritical problems $k > 4$ also admit finite time blow up solutions, the problem remains open. There are a number of numerical results that suggest that this is the case, see \cite{DM1998}, for instance. In recent work of Koch \cite{koch2015} and Lan \cite{lan2015}, blow-up solutions to \eqref{gkdv} with a slightly supercritical nonlinearity have been constructed. These results do not fall within the class of nonlinearities we consider here. We do not address the existence of finite time blow up solutions in the present work. Rather we demonstrate the concentration phenomenon for the supercritical generalized KdV equations assuming the existence of finite time blow-up solutions. Mass concentration results for the critical generalized KdV equation are known, see \cite{KPV2000} and \cite{pigottconc}. Theorem \ref{Concentration} below extends these critical concentration results to the supercritical gKdV equation \eqref{gkdv} in the critical Sobolev space $\dot{H}^{s_{k}}(\R)$. Our main assumption is that the blow up solution is of type II, that is, the solution blows up and remains bounded in the critical Sobolev norm. In our case,
\begin{equation}
\label{TypeII1}
\sup_{t \in [0,T^{\ast})} \| u(t) \|_{\dot{H}^{s_{k}}} < \infty,
\end{equation}
where $T^{\ast} > 0$ is the blow up time. It should be pointed out that the local well–posedness theory does not rule out type II solutions.

An abundance of recent literature is devoted to the study type II blow-up solutions for several dispersive models. For instance, in the case of energy-critical wave equation Krieger, Schlag and Tataru \cite{KST09}, Krieger and Schlag \cite{KS14}, Hillairet and Raph\"ael \cite{HR12} and Jendrej \cite{J} constructed examples of this type of solutions. Moreover, the works of Duyckaerts, Kenig and Merle \cite{DKM11}-\cite{DKM12} characterize these solutions. For the energy supercritical NLS, the first example of type II blow-up solutions is due Merle, Rapha\"el and Rodnianski \cite{MRRPr}.

A concentration result for supercritical nonlinear Schr\"{o}dinger (NLS) equations has recently been established by Guo \cite{Guo2013}. There the author considers the following initial value problem
\begin{equation}
\label{nls}
\left \{ \begin{array}{l l}
i u_{t} + \Delta u + \vert u \vert^{p-1} u = 0, & x \in \R^{d}, t \geq 0\\
u(x,0) = u_{0}(x) 
\end{array}
\right .
\end{equation}
with $p > 1+ 4/d$
and establishes a concentration result, provided the blow up is of Type II, meaning that
\begin{equation}
\label{TypeII}
\sup_{t \in [0,T^{\ast})} \| u(t) \|_{\dot{H}^{s_{p}}} < \infty,
\end{equation}
where $s_{p} = \frac{d}{2} - \frac{2}{p-1} $ denotes the critical Sobolev regularity. 
Guo shows that if $u$ is a solution of \eqref{nls} that blows up at time $T^{*} >0$ and satisfies \eqref{TypeII}, then 
\begin{equation*}
\liminf_{t \uparrow T^{*}} \int_{\vert x - x(t) \vert \leq \lambda(t)} \left \vert (-\Delta)^{s_{p}/2} u(x,t) \right \vert^{2} dx \geq \| \widetilde{Q} \|_{\dot{H}^{s_{p}}}^{2},
\end{equation*}
where $\lambda(t) > 0$ satisfies $\lambda(t) \| \nabla u(t) \|_{L^{2}}^{1/(1-s_{p})} \to \infty$ as $t \uparrow T^{*}$, $\widetilde{Q}$ solves
\begin{equation*}
-\Delta Q + \frac{p-1}{2} (-\Delta)^{s_{p}}Q - \vert Q \vert^{p-1} Q = 0,
\end{equation*}

It should be noted that the work of Merle and Raphael \cite{MR2008} shows that it is possible, in the case of supercritical NLS, for the solution to violate the type II blow up condition (assuming $d\geq 3$ and radial initial data). Indeed, they show that there are solutions $u(x,t)$ of \eqref{nls} that blow up at finite time $T^{\ast}$ such that
\begin{equation*}
\lim_{t \uparrow T^{\ast}} \| u(t) \|_{\dot{H}^{s_{p}}} = \infty.
\end{equation*}
Currently no such result exists for supercritical KdV equations \eqref{gkdv}.

Before state our concentration result, we recall that the small data global theory (see Corollary \ref{smallglobal}) implies the existence of a number $\delta_k>0$ such that for any $u_{0} \in \dot{H}^{s_{k}}(\R)$ with $\| u_{0} \|_{\dot{H}^{s_{k}}} < \delta_{k}$, the corresponding solution $u$ of \eqref{gkdv} with $u(0) = u_{0}$ is global in time. Moreover, the solution also satisfies
\begin{equation}
\label{a2}
u\in L^{\infty}_{t} \dot{H}^{s_{k}} \cap L^{5k/4}_{x} L^{5k/2}_{t} \quad \textrm{and} \quad D^{s_{k}}_{x} u \in L^{5}_{x} L^{10}_{t}.
\end{equation}

In light of the above result we can introduce the following definition.

\begin{definition} 
\label{DefDelta}
We define $\delta_{0} = \delta_{0}(k)$ as the supremum over $\delta_{k}$ such that global existence for \eqref{gkdv} holds and the solution $u$ satisfies \eqref{a2}.
\end{definition}

We are now in position to state our main concentration theorem. It asserts that for every type II blow-up solution for the gKdV equation \eqref{gkdv} such that $\delta_{0} \leq \| u(t) \|_{\dot{H}^{s_{k}}} \leq (3\sqrt{2}/4) \delta_{0}$ there is a concentration phenomenon in the $\dot{H}^{s_{k}}(\R)$ norm at the blow up time, with minimal amount $\delta_0$.

\begin{theorem}\label{Concentration}
Let $\delta_{0}>0$ given in Definition \ref{DefDelta} and $u\in C([0,T^{\ast}):\dot{H}^{s_k}(\R))$ be a  solution of the gKdV equation \eqref{gkdv} with $k > 4$ which blows up at finite time $T^{\ast}<\infty$ such that $u(t)\in C = \{ f \in \dot{H}^{s_{k}}(\R) \  \vert \ \delta_{0} \leq \| f \|_{\dot{H}^{s_{k}}} \leq (3\sqrt{2}/4) \delta_{0} \}$ for all $t\in [0,T^{\ast})$. Let $\lambda(t)>0$ such that $\lambda(t)^{-1} (T^{\ast}-t)^{1/3}\rightarrow 0$ as $t\rightarrow T^{\ast}$. There exists $x(t)\in \R$ such that
\begin{equation}\label{Concentration2}
\liminf_{t\rightarrow T^{\ast}}\int_{|x-x(t)|\leq \lambda(t)}|D^{s_k}_xu(x,t)|^2dx\geq \delta_0^2.
\end{equation} 
\end{theorem}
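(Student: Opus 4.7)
My plan is to extract a concentrating profile via Theorem~\ref{Nprofdec}. Fix an arbitrary sequence $t_n\uparrow T^*$ and set $\phi_n:=u(\cdot,t_n)$; by hypothesis,
\[
\delta_0^2 \leq \|\phi_n\|_{\dot H^{s_k}}^2 \leq \tfrac{9}{8}\,\delta_0^2.
\]
Apply Theorem~\ref{Lprofdec} to obtain profiles $\{\psi^j\}$, parameters $(h_n^j,x_n^j,t_n^j)$, and remainders $R_n^J$. Let $u_n(\cdot,s)=u(\cdot,t_n+s)$ be the nonlinear evolution of $\phi_n$ on $I_n:=[0,T^*-t_n)$. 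Since $u$ cannot be extended past $T^*$, the $L^{5k/4}_xL^{5k/2}_t$ blow-up criterion (which follows from Corollary~\ref{smallglobal} via a perturbation argument) forces $\|u_n\|_{L^{5k/4}_xL^{5k/2}_{I_n}}=\infty$. Thus condition (ii) of Theorem~\ref{Nprofdec} fails, and by the equivalence, (i) fails for some index $j_0$: along a subsequence
\[
\|U_n^{j_0}\|_{L^{5k/4}_xL^{5k/2}_{I_n}}+\|D_x^{s_k}U_n^{j_0}\|_{L^5_xL^{10}_{I_n}}\longrightarrow\infty.
\]

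Next I would extract the structural properties of this profile. Definition~\ref{DefDelta} implies that if $\|\psi^{j_0}\|_{\dot H^{s_k}}<\delta_0$, then $U^{j_0}$ is global with finite Strichartz norms, contradicting the divergence just established; hence $\|\psi^{j_0}\|_{\dot H^{s_k}}\geq\delta_0$. The Pythagorean expansion \eqref{LPYTHA} combined with $\|\phi_n\|_{\dot H^{s_k}}^2\leq\tfrac{9}{8}\delta_0^2<2\delta_0^2$ then rules out a second profile with norm $\geq\delta_0$, so $j_0$ is unique and every other $\psi^j$ is a genuinely small-data profile.

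The central quantitative step is to show $h_n^{j_0}/\lambda(t_n)\to 0$. Pass to a further subsequence so that $\bar t:=\lim(-t_n^{j_0})/(h_n^{j_0})^3$ exists in $[-\infty,+\infty]$. The divergence of the Strichartz norm of $U^{j_0}$ on the rescaled interval $J_n:=\bigl(\bar t+o(1),\,\bar t+(T^*-t_n)/(h_n^{j_0})^3\bigr)$ forces $J_n$ to exhaust a maximal Strichartz interval of $U^{j_0}$. A case analysis on $\bar t$ (in the cases $\bar t=\pm\infty$ the scattering condition in Definition~\ref{NLP} gives vanishing Strichartz norm in the corresponding tail, so the divergence cannot come from the endpoint at infinity) yields
\[
\frac{T^*-t_n}{(h_n^{j_0})^3}\geq C>0
\]
for large $n$, hence $h_n^{j_0}\leq C(T^*-t_n)^{1/3}$. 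The hypothesis $\lambda(t_n)^{-1}(T^*-t_n)^{1/3}\to 0$ then gives $h_n^{j_0}/\lambda(t_n)\to 0$.

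Set $x(t_n):=x_n^{j_0}$ and, off the sequence, choose $x(t)$ as a near-maximizer of $x\mapsto\int_{|y-x|\le\lambda(t)}|D^{s_k}u(y,t)|^2\,dy$ so that the conclusion depends only on the supremum. To finish, I localize the Pythagorean identity: using a smooth cutoff $\chi_n$ equal to $1$ on $|x-x_n^{j_0}|\leq\lambda(t_n)/2$ and supported in $|x-x_n^{j_0}|\leq\lambda(t_n)$, the bubble $(h_n^{j_0})^{-2/k}V(-t_n^{j_0}/(h_n^{j_0})^3)\psi^{j_0}((\cdot-x_n^{j_0})/h_n^{j_0})$ sits at scale $h_n^{j_0}\ll\lambda(t_n)$, so $\chi_n$ acts essentially as the identity on it; the remaining profiles are pairwise orthogonal by \eqref{LXT} and contribute $o(1)$ after localization; and the remainder contribution vanishes by \eqref{LPYTHA}. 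This gives
\[
\int_{|x-x_n^{j_0}|\leq\lambda(t_n)}|D^{s_k}u(x,t_n)|^2\,dx \geq \|\psi^{j_0}\|_{\dot H^{s_k}}^2 - o(1) \geq \delta_0^2 - o(1),
\]
which yields \eqref{Concentration2}. The two most delicate points will be (a) the scale bound when $\bar t=\pm\infty$, where one must rescale to pin down an effective finite blow-up time of $U^{j_0}$ and identify it with $T^*$, and (b) the localized Pythagorean identity, where the nonlocality of $D^{s_k}$ forces one to exploit the scale separation $h_n^{j_0}/\lambda(t_n)\to 0$ to control the commutator $[D^{s_k},\chi_n]$.
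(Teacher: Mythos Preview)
Your overall strategy matches the paper's: profile decomposition, identification of a single ``bad'' profile $\psi^{j_0}$ with $\|\psi^{j_0}\|_{\dot H^{s_k}}\geq\delta_0$, a scale bound $h_n^{j_0}\lesssim (T^*-t_n)^{1/3}$, and then localization. There is, however, a genuine gap at the localization step. Your claim that the bubble $(h_n^{j_0})^{-2/k}V(-t_n^{j_0}/(h_n^{j_0})^3)\psi^{j_0}((\cdot-x_n^{j_0})/h_n^{j_0})$ ``sits at scale $h_n^{j_0}$'' presupposes that $\bar t=\lim(-t_n^{j_0}/(h_n^{j_0})^3)$ is \emph{finite}: otherwise the Airy flow disperses $\psi^{j_0}$ and the $\dot H^{s_k}$ mass is no longer concentrated near $x_n^{j_0}$ at scale $h_n^{j_0}$. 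Your case analysis correctly produces the scale bound even when $\bar t=-\infty$ (indeed then $(T^*-t_n)/(h_n^{j_0})^3\to+\infty$), but it does not exclude $\bar t=-\infty$, and in that case the cutoff $\chi_n$ does \emph{not} act as the identity on the bubble. The paper closes this gap by also running the argument on the backward interval $[-t_n,0]$: since $\|u\|_{L^{5k/4}_xL^{5k/2}_{[0,T^*)}}=\infty$, one has $\|u_n\|_{L^{5k/4}_xL^{5k/2}_{[-t_n,0]}}\to\infty$ as well, and Theorem~\ref{Nprofdec} again produces a bad profile. The upper bound $\|u(t)\|_{\dot H^{s_k}}\leq(3\sqrt2/4)\delta_0$ (via \eqref{LPYTHA}) forces this to be the \emph{same} index $j_0$, and backward divergence then rules out $\bar t=-\infty$, so one may take $\bar t=0$.

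For the final step the paper proceeds differently and more directly than your localized Pythagorean expansion. Rather than commutator estimates for $[D^{s_k},\chi_n]$, the paper shows (using the orthogonality \eqref{LXT} on the $j\neq j_0$ pieces, the smallness \eqref{LSPWNL2} on $R_n^J$, and a short lemma, Lemma~\ref{Vweak}, characterizing weak $\dot H^{s_k}$ convergence through the Airy Strichartz norm) that
\[
(h_n^{j_0})^{2/k}\,u\bigl(h_n^{j_0}\cdot+x_n^{j_0},\,t_n\bigr)\ \rightharpoonup\ \psi^{j_0}\quad\text{weakly in }\dot H^{s_k}(\R).
\]
Weak lower semicontinuity of the $L^2$ norm on balls then gives, for every $R>0$,
\[
\liminf_{n\to\infty}\int_{|x-x_n^{j_0}|\leq R\,h_n^{j_0}}|D^{s_k}_x u(x,t_n)|^2\,dx\ \geq\ \int_{|y|\leq R}|D^{s_k}_x\psi^{j_0}(y)|^2\,dy,
\]
and since $R\,h_n^{j_0}\leq\lambda(t_n)$ for large $n$, letting $R\to\infty$ yields \eqref{Concentration2}. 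This bypasses entirely the commutator issue you flag in (b); your approach could be completed along the lines you indicate (split the cross term into the global inner product, which vanishes by orthogonality, plus a piece controlled by the vanishing tail of the $j_0$ bubble outside the ball), but only \emph{after} you have secured $\bar t\in\R$.
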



\begin{remark}
The assumption $\|u(t)\|_{\dot{H}^{s_k}}\leq (3\sqrt{2}/4) \delta_0$ is technical and it guarantees the uniqueness of the blowing up profiles given by Theorem \ref{Nprofdec} below. Without this assumption we cannot prove Theorem \ref{Concentration}.
\end{remark}

The proof of Theorem \ref{Concentration} is inspired by the work of Keraani \cite{Keraani2006} where the author establishes a concentration result for the critical nonlinear Schr\"{o}dinger equation 
\eqref{nls}
with $d = 1,2$ and $p = 1 + 4/d$ by first establishing a nonlinear profile decomposition result for the solutions.

\subsection{Organization} 
The paper is organized as follows. In Section \ref{notation} we review the notation that is to be used throughout the paper. Section \ref{lwpreview} offers a review of the Strichartz estimates and the local well-posedness theory for the supercritical generalized KdV equations including criteria for blow-up. The proof of Theorem \ref{Nprofdec} is contained in Section \ref{nlpd}, and the proof of Theorem \ref{Concentration} is given in Section \ref{conc}.

\section{Notation} \label{notation}

In this section we introduce the notation that will be used throughout the paper. We use $c$ to denote various constants that may vary from one line to the next. Given any positive quantities $a$ and $b$, the notation $a \lesssim b$ means that $a \leq cb$, with $c$ uniform with respect to the set where $a$ and $b$ vary. Also, we denote $a \sim b$ when, $a \lesssim b$ and $b \lesssim a$. 

We write $\| f \|_{L^{p}}$ for the norm of $f$ in $L^{p}(\R)$. We also use the mixed norm space $L^{q}_{t} L^{r}_{x}$ to denote the space of space-time functions $u(x,t)$ for which the norm 
\begin{equation*}
\| u \|_{L^{q}_{t} L^{p}_{x}} := \left ( \int_{-\infty}^{\infty} \| u(t) \|_{L^{r}_{x}}^{q} dt \right )^{1/q}
\end{equation*}
is finite, with the usual modifications if either $q = \infty$ or $r = \infty$. In some cases we wish to consider a finite time interval $I = [a,b]$, in which case we write
\begin{equation*}
\| u \|_{L^{q}_{I} L^{p}_{x}} = \| u \|_{L^{q}_{[a,b]} L^{p}_{x}} = \left ( \int_{a}^{b} \| u(t) \|_{L^{p}_{x}}^{q} dt \right )^{1/q}.
\end{equation*}
The spaces $L^{p}_{x} L^{q}_{t}$ and $L^{p}_{x} L^{q}_{I} = L^{p}_{x} L^{q}_{[a,b]}$ are defined similarly.

We define the spatial Fourier transform of a function $f(x)$ by
\begin{equation*}
\widehat{f}(\xi) = \int_{-\infty}^{\infty} e^{-ix \xi} f(x) dx.
\end{equation*}

The class of Schwartz functions is denoted by $\mathcal{S}(\R)$. We define $D^{s}_{x}$ to be the Fourier multiplication operator with symbol $\vert \xi \vert^{s}$. In this case the (homogeneous) Sobolev space $\dot{H}^{s}(\R)$ is collection of functions $f:\R \to \R$ equipped with the norm
\begin{equation*}
\| f \|_{\dot{H}^{s}} = \| D^{s}_{x} f \|_{L^{2}}.
\end{equation*}

\section{Review of the Local Well-Posedness Theory} \label{lwpreview}

In this section we shall recall the well-posedness theory for the supercritical gKdV equations, \eqref{gkdv} with $k > 4$ in the critical Sobolev space $\dot{H}^{s_k}(\R)$. We begin by recalling the Strichartz estimates associated with the Airy evolution
\begin{equation}
\label{Lkdv}
\left \{ \begin{array}{l l}
\partial_{t} u + \partial_{x}^{3}u = 0, & x \in \R, \  t>0,\\
u(0,x) = u_{0}(x)
\end{array}
\right .
\end{equation}
The solution of \eqref{Lkdv} is given by $u(x,t)=V(t)u_0(x)$, where $V(t):= \exp(-it\partial_{x}^{3})$ is the linear propagator for the Airy equation. Notice that the solution is globally defined in the Sobolev space $\dot{H}^s(\R)$, for all $s\in \R$. Moreover, $\{V(t)\}_{t\in \R}$ defines a unitary operator in these spaces. In particular, we have for all $s\in \R$
\begin{equation}\label{HSP}
\|V(t)u_0\|_{\dot{H}^s}=\|u_0\|_{\dot{H}^s}, \peq \textrm{ for all } t\in \R.
\end{equation}

Next, we recall some Strichartz type estimates associated to the linear propagator.

\begin{lemma}\label{lemma1} Let $k\geq 4$ and $s_k= (k-4)/2k$. Then we have the following estimates:
\begin{itemize}
\item [$(i)$] $\|V(t)u_0\|_{L^{5k/4}_xL^{5k/2}_t}\leq c\|u_0\|_{\dot{H}^{s_k}}$;
\item [$(ii)$] $\|V(t)u_0\|_{L^{5}_xL^{10}_t}\leq c\|u_0\|_{L^2}$;
\item [$(iii)$] $\|\partial_x\int_0^tV(t-s)g(\cdot, s)ds\|_{L^{\infty}_tL^2_x}+ \|\int_0^tV(t-s)g(\cdot, s)ds\|_{L^{5}_xL^{10}_t} \leq c \|g\|_{L^{1}_xL^{2}_t}$;
\item [$(iv)$]$\|\partial_x\int_0^tV(t-s)g(\cdot, s)ds\|_{L^{5k/4}_xL^{5k/2}_t}\leq c \|D_x^{s_k}g\|_{L^{1}_xL^{2}_t}.$
\end{itemize}
\end{lemma}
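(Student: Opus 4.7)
My plan is to recognize Lemma \ref{lemma1} as a collection of standard Strichartz and Kato-smoothing bounds for the Airy group $V(t)=e^{-t\partial_x^3}$; all four estimates appear (essentially verbatim) in Kenig--Ponce--Vega \cite{kpv1}, with the $\dot H^{s_k}$-critical versions (i) and (iv) also featured in \cite{FP13} and \cite{FV15}. I would therefore cite the main references and, for completeness, sketch how the four estimates reduce to two building blocks: (a) the Kato smoothing identity $\|\partial_x V(t)u_0\|_{L^\infty_xL^2_t}\leq c\|u_0\|_{L^2}$, obtained by Plancherel in $t$ after the substitution $\eta=\xi^3$ in the oscillatory integral $V(t)u_0(x)=\int e^{i(x\xi+t\xi^3)}\hat{u}_0(\xi)\,d\xi$, and (b) estimate (ii) itself, which follows by interpolating the energy identity $\|V(t)u_0\|_{L^\infty_tL^2_x}=\|u_0\|_{L^2}$ against the $L^4_tL^\infty_x$ bound with $1/4$-derivative loss that comes from the stationary-phase estimate $|V(t)\delta_0(x)|\lesssim|t|^{-1/3}$ and a $TT^*$ argument.

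From (a) and (b) I would derive (iii) as the natural inhomogeneous companion: duality of (a) yields the $L^\infty_tL^2_x$ piece, and (b) together with the group property $V(t-s)=V(t)V(-s)$ yields the $L^5_xL^{10}_t$ piece for the \emph{full-line} Duhamel integral; the Christ--Kiselev lemma then converts both pieces to the retarded integral $\int_0^t$. For (i) and (iv), which are the $\dot H^{s_k}$-critical analogues of (ii) and (iii), I would use that $D_x^{s_k}$ commutes with $V(t)$ by \eqref{HSP}: applying (ii) and (iii) after dressing the data by $D_x^{s_k}$ gives bounds in $L^5_xL^{10}_t$ from $\dot H^{s_k}$ data on the right, which one upgrades to the target $L^{5k/4}_xL^{5k/2}_t$ by a Littlewood--Paley decomposition plus Bernstein's inequality on each frequency annulus. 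That this is the correct estimate is consistent with \eqref{scaling}, under which the $\dot H^{s_k}$-norm of data and the $L^{5k/4}_xL^{5k/2}_t$-norm of its Airy evolution both scale as $\lambda^{-2/k}$.

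The main technical nuisance is applying Christ--Kiselev inside the mixed-norm space $L^{5k/4}_xL^{5k/2}_t$ in the presence of the fractional derivative on the right-hand side of (iv); however, commutation of $D_x^{s_k}$ with the retarded truncation and $V(t-s)$ is routine (since $D_x^{s_k}$ acts only in $x$), and the Christ--Kiselev hypothesis $5k/2>1$ is automatic for $k\geq 4$, so once these checks are in place all four estimates follow, and I would refer the reader to \cite{kpv1} for the detailed computations.
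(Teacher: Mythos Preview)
Your proposal is correct and aligns with the paper's own proof, which simply cites \cite{FP13} (Lemma 2.5) for (i), \cite{kpv1} (Corollary 3.8) for (ii), and then says (iii)--(iv) follow from (i), (ii), and \eqref{HSP} via duality and a $TT^*$ argument. Your sketch is in fact more informative than the paper's terse proof: you correctly isolate the Kato smoothing effect $\|\partial_xV(t)u_0\|_{L^\infty_xL^2_t}\lesssim\|u_0\|_{L^2}$ as the ingredient needed to land on the $L^1_xL^2_t$ right-hand side in (iii)--(iv) (the paper's claim that (i)--(ii) alone suffice glosses over this), and your Littlewood--Paley/Bernstein route to (i) is a valid alternative to citing \cite{FP13} directly.
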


\begin{proof}
Inequalities $(i)$ and $(ii)$ were proved, respectively, by Farah and Pastor \cite{FP13} (Lemma 2.5) and Kenig, Ponce and Vega \cite{kpv1} (Corollary 3.8). The inequalities $(iii)$ and $(iv)$ follows from $(i)$, $(ii)$ and \eqref{HSP} by way of duality and a $TT^{\ast}$ argument.
\end{proof}

Further well-known Strichartz estimates are the following

\begin{lemma}\label{lemma12} Let $k\geq 4$ and $s_k= (k-4)/2k$. Then we have the following estimates:
\begin{itemize}
\item [$(i)$] $\|D^{1+s_k}_x V(t)u_0\|_{L^{\infty}_xL^{2}_t}+\|D^{-1/k}_x V(t)u_0\|_{L^{k}_xL^{\infty}_t}\leq c\|u_0\|_{\dot{H}^{s_k}}$;
\item [$(ii)$] $\|V(t)u_0\|_{L_x^{\frac{k(3k-2)}{3k-4}}L^{3k-2}_{t}}\leq c\|u_0\|_{\dot{H}^{s_k}}$;
\item [$(iii)$] $\|D^{1+s_k}_x\partial_x\int_0^tV(t-s)g(\cdot, s)ds\|_{L^{\infty}_tL^2_x}\leq c \|D_x^{s_k}g\|_{L^{1}_xL^{2}_t}.$
\item [$(iv)$] $\|\partial_x\int_0^tV(t-s)g(\cdot, s)ds\|_{L_x^{\frac{k(3k-2)}{3k-4}}L^{3k-2}_{t}} \leq c \|D_x^{s_k}g\|_{L^{1}_xL^{2}_t}.$
\end{itemize}
\end{lemma}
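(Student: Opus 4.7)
The plan is to derive all four inequalities from the foundational sharp smoothing and maximal function inequalities for the Airy group of Kenig, Ponce and Vega \cite{kpv1}, the Strichartz estimates in Lemma~\ref{lemma1}, and standard duality/$TT^{*}$ and Christ-Kiselev arguments, very much in the spirit of the proof of Lemma~\ref{lemma1}. For $(i)$, the first bound is a direct application of the sharp Kato smoothing $\|\partial_x V(t) f\|_{L^{\infty}_x L^{2}_t} \leq c \|f\|_{L^{2}}$ with $f = D^{s_k}_x u_0$, after commuting $D^{s_k}_x$ with $V(t)$ and $\partial_x$ (both are Fourier multipliers). The second bound is the sharp one-dimensional maximal function estimate in homogeneous form, $\|V(t) g\|_{L^{k}_x L^{\infty}_t} \leq c \|D^{\alpha}_x g\|_{L^{2}}$ with $\alpha = \tfrac{1}{2} - \tfrac{1}{k}$ forced by the scaling $g(\cdot) \mapsto g(\lambda \cdot)$; applied to $g = D^{-1/k}_x u_0$ and using $\alpha - \tfrac{1}{k} = s_k$, it recovers the displayed inequality.

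For $(ii)$, I would first verify that $(p,q) = \bigl(\tfrac{k(3k-2)}{3k-4}, 3k-2\bigr)$ satisfies the Airy scaling relation $\tfrac{1}{p} + \tfrac{3}{q} = \tfrac{1}{2} - s_k$; a direct computation gives
\begin{equation*}
\frac{1}{p} + \frac{3}{q} = \frac{3k-4}{k(3k-2)} + \frac{3}{3k-2} = \frac{2}{k} = \frac{1}{2} - s_k,
\end{equation*}
identifying this as a scale-invariant admissible pair at $\dot{H}^{s_k}$ regularity. When $k = 4$ the pair becomes $(5, 10)$ and the inequality specializes to Lemma~\ref{lemma1}$(ii)$; for general $k > 4$ it is obtained by complex interpolation between the trivial energy endpoint $\|V(t) u_0\|_{L^{\infty}_t L^{2}_x} = \|u_0\|_{L^{2}}$ (unitarity) and the maximal-function endpoint supplied by $(i)$, with the interpolation parameter chosen to match the stated exponents.

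Estimates $(iii)$ and $(iv)$ are the inhomogeneous Duhamel counterparts of $(i)$ and $(ii)$, and they follow from these by a standard duality/$TT^{*}$ argument. Concretely, dualizing the smoothing estimate from $(i)$ yields an $L^{2}_{x}$ bound on $\int_{\R} V(-s) g(s)\, ds$ in terms of $\|g\|_{L^{1}_{x} L^{2}_{t}}$ (modulo the correct power of $D_x$), and composing with $V(t)$ (unitary on $L^{2}_x$, hence on $\dot{H}^{s}$ for all $s$) reconstitutes a homogeneous Duhamel integral over $\R$. The Christ-Kiselev lemma then replaces $\int_{\R}$ by the truncated integral $\int_0^t$ since the temporal exponent $q = 2 > 1$. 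The extra $D^{1+s_k}_x$ in $(iii)$ is precisely the additional derivative recovered by composing the smoothing with itself in the $TT^{*}$ step; estimate $(iv)$ is obtained analogously starting from $(ii)$.

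The main technical obstacle I anticipate is the derivative book-keeping in $(iii)$--$(iv)$: one must carefully distinguish $\partial_x$ (Fourier symbol $i\xi$) from $D_x$ (symbol $|\xi|$), identify them in modulus on $L^{2}_{x}$, and verify that the scaling of both sides of each inequality is consistent with the stated form before extracting the correct powers from the $TT^{*}$ composition. Once this accounting is done, the remaining reductions to $(i)$, $(ii)$, and the Christ-Kiselev lemma are routine.
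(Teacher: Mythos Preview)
Your treatment of $(i)$ is correct and matches the paper's citation of \cite{kpv1}. Your outline for $(iii)$ and $(iv)$ via duality and $TT^{*}$ is also in line with the paper's argument.

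There is, however, a genuine gap in your proof of $(ii)$. You propose to interpolate between the energy identity $\|V(t)u_{0}\|_{L^{\infty}_{t}L^{2}_{x}}=\|u_{0}\|_{L^{2}}$ and the maximal estimate $\|D^{-1/k}_{x}V(t)u_{0}\|_{L^{k}_{x}L^{\infty}_{t}}\leq c\|u_{0}\|_{\dot H^{s_{k}}}$. This cannot produce the target space $L^{p}_{x}L^{q}_{t}$ with $q=3k-2<\infty$: both endpoints carry time exponent $\infty$, so any interpolate also has time exponent $\infty$. (There is a second obstruction as well: the two endpoints live in mixed Lebesgue spaces with \emph{opposite} orderings of the variables, $L^{\infty}_{t}L^{2}_{x}$ versus $L^{k}_{x}L^{\infty}_{t}$, and complex interpolation of such spaces does not simply yield a single $L^{p}_{x}L^{q}_{t}$.)

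The paper's route is to interpolate between the \emph{two} estimates in $(i)$, namely the smoothing bound $\|D^{1+s_{k}}_{x}V(t)u_{0}\|_{L^{\infty}_{x}L^{2}_{t}}\leq c\|u_{0}\|_{\dot H^{s_{k}}}$ and the maximal bound $\|D^{-1/k}_{x}V(t)u_{0}\|_{L^{k}_{x}L^{\infty}_{t}}\leq c\|u_{0}\|_{\dot H^{s_{k}}}$. Both are in the same $L^{p}_{x}L^{q}_{t}$ ordering, and the time exponents are $2$ and $\infty$ respectively, so interpolation can hit any intermediate $q$. Choosing the interpolation parameter $\theta=\tfrac{3k-4}{3k-2}$ (weight on the maximal endpoint) so that the derivative orders $1+s_{k}$ and $-1/k$ combine to $0$, one obtains exactly $p=\tfrac{k(3k-2)}{3k-4}$ and $q=3k-2$. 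Replace your energy endpoint by the smoothing estimate from $(i)$ and the argument goes through.
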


\begin{proof}
For $(i)$ see Kenig, Ponce and Vega \cite{kpv1} (Theorem 3.5 and Lemma 3.29). The inequality $(ii)$ is a interpolation between the two inequalities in $(i)$. Finally, $(iii)$ can be obtained by duality and a $TT^{\ast}$ argument combined with $(i)$ and $(ii)$.
\end{proof}

\begin{remark} 
Note that when $k=4$ the estimates in Lemma \ref{lemma1} $(i)$-$(ii)$ and in Lemma \ref{lemma12} $(ii)$ are all the same.
\end{remark}

We say that a pair $(p,q)$ is $\dot{H}^{s_{k}}$-admissible if 
\begin{equation}
\label{Adpair}
\frac{2}{p} + \frac{1}{q} = \frac{2}{k}.
\end{equation}
In this case we have the following Strichartz type estimate.

\begin{lemma}\label{lemma2} If $k\geq 4$, $s_k= (k-4)/2k$, and $(p,q)$ is an $\dot{H}^{s_{k}}$-admissible pair,  then 
\begin{equation}\label{STR0}
 \|D^{1/p}_xV(t)u_0\|_{L_t^pL_x^q}\leq c\|u_0\|_{\dot{H}^{s_k}}.
\end{equation}
\end{lemma}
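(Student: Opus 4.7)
The plan is to establish the estimate by combining the classical $L^2$-based Strichartz theory for the Airy equation with a Sobolev embedding in the spatial variable. The (non-endpoint) $L^2$-Strichartz estimate for the Airy propagator reads $\|V(t) g\|_{L^{p}_t L^{r}_x} \lesssim \|g\|_{L^2}$ for every pair $(p, r)$ with $3/p + 1/r = 1/2$ and $p \in (2, \infty]$; this follows from the standard $TT^\ast$/Stein--Tomas argument applied to the dispersive decay $\|V(t) g\|_{L^\infty_x} \lesssim |t|^{-1/3} \|g\|_{L^1_x}$ together with the unitarity of $V(t)$ on $L^2(\R)$.

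Given an $\dot{H}^{s_k}$-admissible pair $(p, q)$ satisfying \eqref{Adpair}, I would set $\sigma := s_k - 1/p$ and choose $r$ by $1/r = 1/q + \sigma$. A direct computation shows that $3/p + 1/r = 1/2$ precisely when $2/p + 1/q = 2/k$, so $(p, r)$ lies on the classical $L^2$-admissible line. Applying the above Strichartz estimate to $g = D^{s_k}_x u_0$, invoking the 1D Sobolev embedding $\|f\|_{L^q_x} \lesssim \|D^\sigma_x f\|_{L^r_x}$ (valid for $\sigma \geq 0$ and $1/q = 1/r - \sigma$), and using that $V(t)$ commutes with $D^{s_k}_x$, one obtains
\[
\|D^{1/p}_x V(t) u_0\|_{L^p_t L^q_x} \lesssim \|V(t) D^{s_k}_x u_0\|_{L^p_t L^r_x} \lesssim \|D^{s_k}_x u_0\|_{L^2} = \|u_0\|_{\dot{H}^{s_k}}.
\]
At the endpoint $p = \infty$ (hence $q = k/2$) the argument reduces to the isometry \eqref{HSP} combined with the Sobolev embedding $\dot{H}^{s_k}(\R) \hookrightarrow L^{k/2}(\R)$, valid because $1/(k/2) = 1/2 - s_k$.

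The main obstacle is that the Sobolev step requires $\sigma = s_k - 1/p \geq 0$, i.e.\ $p \geq 2k/(k-4)$, which covers the full admissible range $p \in [k, \infty]$ precisely when $k \geq 6$. For $4 < k < 6$, the admissible pairs with $p \in [k, 2k/(k-4))$ fall outside the scope of this direct argument, and one would have to supplement it either by a Littlewood--Paley decomposition combined with scaling and frequency-localised orthogonality, or by a complex-interpolation argument between the endpoint $(\infty, k/2)$ and a derivative-loss estimate such as the Kato-smoothing estimate in Lemma \ref{lemma12}(i), exploiting that $z \mapsto D^{z}_x V(t)$ is an analytic family of operators along the admissible segment.
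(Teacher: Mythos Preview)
The paper does not give a self-contained argument; it simply cites Theorem~2.1 of Kenig--Ponce--Vega \cite{KPV4}, where the estimate is obtained via an oscillatory-integral analysis of the Airy kernel that exploits the nondegeneracy of the Hessian $\phi''(\xi)=6\xi$ away from the origin. That analysis yields the sharper dispersive bound $\|D_x^{\beta}V(t)\|_{L^1\to L^\infty}\lesssim |t|^{-(1+2\beta)/3}$ for $0\le\beta\le 1/2$, and the $TT^\ast$ machinery then produces the full Strichartz family \emph{with} the $1/p$-derivative gain in one stroke, uniformly in $k\ge 4$.

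Your route---derivative-free Strichartz from the $|t|^{-1/3}$ decay plus a spatial Sobolev embedding---is genuinely different and more elementary, and it is complete when $k\ge 6$: there the admissible range $p\in[k,\infty]$ lies entirely in the region $p\ge 2k/(k-4)$ where $\sigma=s_k-1/p\ge0$, so the embedding goes in the right direction. For $k\in\{4,5\}$, however, the gap you flag is real and not merely technical. In that range one has $1/p>s_k$ on part of the admissible line (in particular at the diagonal pair $p=q=3k/2$ actually used in \eqref{STR}), so Sobolev embedding cannot manufacture the missing $1/p-s_k$ derivatives; this gain is precisely the smoothing effect of the Airy group, and it does \emph{not} follow from the $|t|^{-1/3}$ decay alone. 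Your proposed fixes do not close the gap as stated: the Kato smoothing estimate in Lemma~\ref{lemma12}(i) lives in $L^\infty_xL^2_t$, hence on a different interpolation scale than the $L^p_tL^q_x$ norms here, and a Littlewood--Paley decomposition still needs, on each dyadic block, a Strichartz bound with a derivative gain---which brings you back to the KPV oscillatory estimate. In short, for $k=4,5$ one is essentially forced to invoke (or reprove) the content of \cite{KPV4}.
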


\begin{proof}
See Kenig, Ponce and Vega\cite[Theorem 2.1]{KPV4}.
\end{proof}

The particular case when $p=q$ in the above estimate will be useful in the sequel. In this case we find that
\begin{equation}
\label{STR}
\| D^{2/3k}_{x} V(t)u_{0} \|_{L^{3k/2}_{t,x}} \leq c \| u_{0} \|_{\dot{H}^{s_{k}}}.
\end{equation}

Also, recall the fractional Leibniz rule established by Kenig, Ponce, and Vega in \cite{kpv1} (see Theorems A.6, A.8, and A.13).

\begin{lemma} 
\label{Leibniz}
Let $0 < \alpha < 1$ and $p, p_{1}, p_{2}, q, q_{1}, q_{2} \in (1, \infty)$ with 
\begin{equation*}
\frac{1}{p} = \frac{1}{p_{1}} + \frac{1}{p_{2}} \qquad \text{and} \qquad \frac{1}{q} = \frac{1}{q_{1}} + \frac{1}{q_{2}}.
\end{equation*}
Then
\begin{itemize}
\item[$(i)$] $\displaystyle \| D^{\alpha}_{x}(fg) - f D^{\alpha}_{x} g - g D^{\alpha}_{x} f \|_{L^{p}_{x} L^{q}_{t}} \lesssim \| D^{\alpha}_{x}f \|_{L^{p_{1}}_{x} L^{q_{1}}_{t}} \| g \|_{L^{p_{2}}_{x} L^{q_{2}}_{t}}$. \vspace{0.2cm}\\
This result holds in the case $p=1, q=2$ as well. 

\vspace{0.2cm}

\item[$(ii)$] $\displaystyle \| D^{\alpha}_{x} F(f) \|_{L^{p}_{x} L^{q}_{t}} \lesssim \| D^{\alpha}_{x}f \|_{L^{p_{1}}_{x} L^{q_{2}}_{t}} \| F'(f) \|_{L^{p_{2}}_{x} L^{q_{2}}_{t}},$ where $F\in C^1(\mathbb{R})$.
\end{itemize}
\end{lemma}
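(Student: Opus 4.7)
The plan is to prove both parts via Littlewood-Paley/paraproduct machinery in the spatial variable, combined with vector-valued estimates to handle the mixed $L^p_x L^q_t$ norms.

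For (i) I would split the product using a dyadic Littlewood-Paley decomposition into two low-high paraproducts and one high-high piece, $fg = \pi_{\ll}(f,g) + \pi_{\ll}(g,f) + \pi_{\sim}(f,g)$. After subtracting $f D_x^{\alpha}g + g D_x^{\alpha}f$ the surviving contributions are commutators of the form $[D_x^{\alpha},\pi_{<N}f]\pi_N g$ plus the high-high interaction. Each such commutator gains an extra factor of $N^{-\varepsilon}$ coming from the mean value theorem applied to the smooth low-frequency symbol, so by Coifman-Meyer multiplier theory extended to $L^q_t$-valued functions one obtains
\[
\bigl\| [D_x^{\alpha},\pi_{<N}f]\pi_N g\bigr\|_{L^p_x L^q_t} \lesssim \|D_x^{\alpha}f\|_{L^{p_1}_x L^{q_1}_t}\, \|g\|_{L^{p_2}_x L^{q_2}_t}
\]
for every dyadic $N$, with dyadically summable constants. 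The square-function characterization of $L^p_x$ for $1<p<\infty$ then sums these pieces and gives the claim; the endpoint $p=1,\, q=2$ has to be treated by replacing $L^1_x$ with the atomic Hardy space $H^1_x$, or equivalently by invoking the weak-$(1,1)$ theory for $L^2_t$-valued Calderón commutators.

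For (ii) the plan is to combine the Stein pointwise representation
\[
D_x^{\alpha}g(x) = c_{\alpha}\int_{\R}\frac{g(x)-g(y)}{|x-y|^{1+\alpha}}\, dy
\]
with the $C^1$ Taylor identity
\[
F(f(x)) - F(f(y)) = (f(x)-f(y))\int_0^1 F'\bigl(f(y)+s(f(x)-f(y))\bigr)\, ds
\]
to deduce the pointwise majorization $|D_x^{\alpha}F(f)(x,t)| \lesssim \mathcal{M}_x\bigl(F'(f)\bigr)(x,t)\cdot |D_x^{\alpha}f(x,t)|$, where $\mathcal{M}_x$ denotes the Hardy-Littlewood maximal function acting in $x$ for each fixed $t$. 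Hölder in $x$ with exponents $(p_1,p_2)$ followed by Hölder in $t$ with $(q_1,q_2)$, together with the boundedness of $\mathcal{M}_x$ on $L^{p_2}_x L^{q_2}_t$ (valid since $p_2,q_2>1$), yields the stated inequality.

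The main obstacle is the endpoint $(p,q)=(1,2)$ in (i): standard paraproduct/square-function arguments in $L^p_x$ break down at $p=1$, so one must route through $H^1_x$-$\textrm{BMO}_x$ duality or through the weak-type $(1,1)$ theory for Calderón-type commutators, treating them as singular integrals with values in the Hilbert space $L^2_t$. A secondary technical point is justifying the pointwise maximal-function domination in (ii) under the mere assumption $F\in C^1$, which typically requires an approximation argument with mollified $F$ together with Fatou's lemma once the estimate has been established for smooth $F$.
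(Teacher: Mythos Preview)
The paper does not prove this lemma at all; it simply cites Theorems~A.6, A.8, and A.13 of Kenig--Ponce--Vega~\cite{kpv1}. So your proposal is not being compared against an argument in the present paper but against the KPV appendix.

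Your outline for (i) via paraproducts and Coifman--Meyer theory is a reasonable route in the open range $1<p<\infty$, and it is essentially equivalent in spirit to what KPV do (they work with an explicit kernel decomposition rather than modern paraproduct language, but the mechanism is the same). Your treatment of the endpoint $p=1$, $q=2$ is too vague to count as a proof: routing through the Hardy space $H^1_x$ does not automatically yield an $L^1_x$ bound, and the weak-$(1,1)$ theory for vector-valued Calder\'on commutators would have to be set up carefully. KPV handle this endpoint by a direct argument specific to the $L^1_xL^2_t$ structure.

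There is a genuine error in your plan for (ii). The pointwise majorization
\[
|D_x^{\alpha}F(f)(x,t)| \lesssim \mathcal{M}_x\bigl(F'(f)\bigr)(x,t)\cdot |D_x^{\alpha}f(x,t)|
\]
is false. From the Stein representation and the Taylor identity you obtain
\[
|D_x^{\alpha}F(f)(x)| \le c_\alpha \int_{\R} \frac{|f(x)-f(y)|}{|x-y|^{1+\alpha}}\,\Bigl|\int_0^1 F'\bigl(f(y)+s(f(x)-f(y))\bigr)\,ds\Bigr|\,dy,
\]
but the two factors inside the $y$-integral cannot be decoupled into a product of $\mathcal{M}_x(F'(f))(x)$ and $|D_x^{\alpha}f(x)|$. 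First, the value $f(y)+s(f(x)-f(y))$ is not $f$ evaluated at a point near $x$, so there is no reason $|F'(\cdot)|$ at that value is controlled by $\mathcal{M}_x(F'(f))(x)$. Second, even granting such a bound, what remains is $\int |f(x)-f(y)|\,|x-y|^{-1-\alpha}\,dy$, which is not $|D_x^{\alpha}f(x)|$ (the latter is the absolute value of a principal-value integral and relies on cancellation; the former need not even be finite). The KPV proof of the chain rule proceeds instead through a dyadic decomposition of the kernel and careful $L^p$ estimates at each scale; no pointwise product inequality of the type you claim is available.
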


The following theorem is the local theory as proved by Farah and Pastor \cite{FP13}. We include a summary of the proof for the reader's convenience and since some of the estimates developed in the proof will be used in the ensuing analysis.

\begin{theorem}[Small Data Local Theory]
\label{local}
Let $k \geq 4,\ s_{k} = (k-4)/2k,\ u_{0} \in \dot{H}^{s_{k}}(\R)$ with $\| u_{0} \|_{\dot{H}^{s_{k}}} \leq K$, and $t_{0} \in I$, a time interval. There exists $\delta = \delta(K) > 0$ such that if
\begin{equation*}
\| V(t-t_{0}) u_{0} \|_{L^{5k/4}_{x} L^{5k/2}_{t}} < \delta,
\end{equation*}
there exists a unique solution $u$ of the integral equation
\begin{equation}\label{IntEq}
u(t) = V(t-t_{0}) u_{0} - \int_{t_{0}}^{t} V(t -t') \partial_{x} (u^{k+1})(t') dt'
\end{equation}
in $I \times \R$ with $u \in C(I; \dot{H}^{s_{k}}(\R))$ satisfying
\begin{equation}
\label{i0}
\| u \|_{L^{5k/4}_{x} L^{5k/2}_{I}} \leq 2\delta \qquad \text{and} \qquad \| u \|_{L^{\infty}_{I} \dot{H}^{s_{k}}_{x}} + \| D^{s_{k}}_{x} u \|_{L^{5}_{x} L^{10}_{I}} < 2cK,
\end{equation}
for some positive constant $c$.
\end{theorem}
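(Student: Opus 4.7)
The plan is to prove Theorem \ref{local} via a contraction mapping argument applied to the Duhamel map
\[
\Phi(u)(t) := V(t-t_0) u_0 - \int_{t_0}^{t} V(t-t')\,\partial_x(u^{k+1})(t')\,dt',
\]
on the complete metric space
\[
X := \bigl\{ u \in C(I;\dot{H}^{s_k}(\R)) : \|u\|_{L^{5k/4}_x L^{5k/2}_I} \leq 2\delta,\ \|u\|_{L^\infty_I \dot{H}^{s_k}_x} + \|D^{s_k}_x u\|_{L^5_x L^{10}_I} \leq 2cK \bigr\},
\]
equipped, say, with the distance $d(u,v) = \|u-v\|_{L^{5k/4}_x L^{5k/2}_I} + \|D^{s_k}_x(u-v)\|_{L^5_x L^{10}_I}$.

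First, I would bound the linear piece. The hypothesis gives $\|V(t-t_0)u_0\|_{L^{5k/4}_x L^{5k/2}_I} < \delta$ directly, while Lemma \ref{lemma1}(ii) applied to $D^{s_k}_x u_0$ together with the unitarity \eqref{HSP} yields
\[
\|V(t-t_0)u_0\|_{L^\infty_I \dot{H}^{s_k}_x} + \|D^{s_k}_x V(t-t_0)u_0\|_{L^5_x L^{10}_I} \leq cK.
\]
Next, the nonlinear term requires the inhomogeneous Strichartz estimates (iii)--(iv) of Lemma \ref{lemma1}, commuting $\partial_x$ with $V$ so that the source becomes $u^{k+1}$ (not $\partial_x(u^{k+1})$), which gives
\[
\bigl\| D^{s_k}_x\!\!\int V(t-t')\partial_x(u^{k+1})\bigr\|_{L^\infty_I L^2_x} + \bigl\| \int V(t-t')\partial_x(u^{k+1})\bigr\|_{L^{5k/4}_x L^{5k/2}_I \cap L^5_x L^{10}_I} \lesssim \| D^{s_k}_x(u^{k+1})\|_{L^1_x L^2_I}.
\]
The key nonlinear estimate is then obtained from the chain rule (Lemma \ref{Leibniz}(ii)) with $F(u) = u^{k+1}$, together with Hölder in the spatial and time variables. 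Writing $1 = \tfrac{1}{5} + \tfrac{4}{5}$ and $\tfrac{1}{2} = \tfrac{1}{10} + \tfrac{2}{5}$ (so $k \cdot \tfrac{5}{4} = \tfrac{5k}{4}$ and $k \cdot \tfrac{5}{2} = \tfrac{5k}{2}$), this yields the crucial bound
\[
\|D^{s_k}_x(u^{k+1})\|_{L^1_x L^2_I} \lesssim \|D^{s_k}_x u\|_{L^5_x L^{10}_I}\,\|u\|_{L^{5k/4}_x L^{5k/2}_I}^{k}.
\]

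Combining these estimates, for $u \in X$ one obtains
\[
\|\Phi(u)\|_{L^{5k/4}_x L^{5k/2}_I} \leq \delta + c(2cK)(2\delta)^k, \qquad \|\Phi(u)\|_{L^\infty_I \dot{H}^{s_k}_x} + \|D^{s_k}_x \Phi(u)\|_{L^5_x L^{10}_I} \leq cK + c(2cK)(2\delta)^k,
\]
so choosing $\delta = \delta(K) > 0$ small enough that $c(2\delta)^k < 1/2$ ensures $\Phi : X \to X$. An analogous computation on the difference $u^{k+1} - v^{k+1}$, bounded pointwise by $(|u|^k + |v|^k)|u-v|$ and then handled by the same Hölder partition (with Lemma \ref{Leibniz}(ii) replaced by its difference version), yields the contraction estimate with constant $\leq 1/2$ after shrinking $\delta$ if necessary. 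Banach's fixed-point theorem then produces the unique solution in $X$ satisfying \eqref{i0}, and continuity in $t$ with values in $\dot{H}^{s_k}$ follows from the Strichartz estimates applied to $\Phi(u)$ itself.

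The main technical obstacle is the careful bookkeeping of Hölder exponents in the fractional Leibniz step: one must arrange the $k+1$ factors of $u$ so that exactly one carries the derivative and gets placed in $L^5_x L^{10}_I$, while the remaining $k$ factors combine into $\|u\|_{L^{5k/4}_x L^{5k/2}_I}^k$, and the resulting mixed-norm space is $L^1_x L^2_I$ so that Lemma \ref{lemma1}(iii)--(iv) can close the estimate. Once these exponents are lined up, the scheme is the standard small-data Picard iteration.
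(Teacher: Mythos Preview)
Your proposal is correct and follows essentially the same contraction-mapping argument as the paper: the same Duhamel map, the same key nonlinear estimate $\|D^{s_k}_x(u^{k+1})\|_{L^1_xL^2_I}\lesssim \|D^{s_k}_x u\|_{L^5_xL^{10}_I}\|u\|_{L^{5k/4}_xL^{5k/2}_I}^k$, and the same smallness mechanism. The only place where the paper is more explicit is the contraction step, where it factors $u^{k+1}-v^{k+1}=(u-v)\sum_{j=0}^k u^{k-j}v^j$ and applies the product form of the fractional Leibniz rule (Lemma~\ref{Leibniz}(i)) rather than a ``difference version'' of the chain rule; your pointwise bound $(|u|^k+|v|^k)|u-v|$ does not by itself control the $D^{s_k}_x$ norm, so you should invoke that factorization when you fill in the details.
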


\begin{proof}
We define
$$
X^k_{a,b}=\{ u \textrm{ on } I\times \R:
\|u\|_{L^{5k/4}_xL^{5k/2}_I}\leq a  \quad \textrm{and} \quad \|D^{s_k}_xu\|_{L^5_xL^{10}_I}\leq b\}.
$$
Let
\begin{equation}\label{Phi}
\Phi(u)(t):=V(t-t_0)u_0-\int_{t_0}^tV(t-t')\partial_x(u^{k+1})(t')dt',
\end{equation}
an integral operator defined on $X^k_{a,b}$. We will next choose $a$, $b$ and $\delta$ such that $\Phi:X^k_{a,b} \rightarrow X^k_{a,b}$ is a contraction.

First note that
\begin{equation*}
\begin{split}
\|\Phi(u)\|_{L^{5k/4}_xL^{5k/2}_I}&\leq
\|V(t-t_0)u_0\|_{L^{5k/4}_xL^{5k/2}_I}+c\|D^{s_k}_x(u^{k+1})\|_{L^1_xL^2_I},
\end{split}
\end{equation*}
where we have used Lemma \ref{lemma1} $(iv)$.

On the other hand
\begin{equation*}
\begin{split}
\|D_x^{s_k}\Phi(u)\|_{L^{5}_xL^{10}_I}&\leq cK+c\|D^{s_k}_x(u^{k+1})\|_{L^1_xL^2_I},
\end{split}
\end{equation*}
by Lemma \ref{lemma1} $(i)$-$(iii)$.

Using the ideas employed in Kenig, Ponce, Vega \cite{kpv1} equation (6.1) (see also Farah, Pastor \cite{FP13} equation (3.32)) we obtain
\begin{equation}\label{LocEst}
\begin{split}
\|D^{s_k}_x(u^{k+1})&\|_{L^1_xL^2_I}\leq c \|u\|^k_{L^{5k/4}_xL^{5k/2}_I}\|D_x^{s_k}u\|_{L^{5}_xL^{10}_I}\leq c a^kb
\end{split}
\end{equation}
Therefore, choosing $b=2cK$ and $a$ such that $ca^k\leq1/2$, we have 
\begin{equation*}
\begin{split}
\|D_x^{s_k}\Phi(u)\|_{L^{5}_xL^{10}_I}&\leq cK +ca^kb\leq b.
\end{split}
\end{equation*}
Now, choosing $\delta=a/2$ and $a$ so that $ca^{k-1}b\leq1/2$ yields
$$
\|\Phi(u)\|_{L^{5k/4}_xL^{5k/2}_I}\leq a.
$$
So that $\Phi:X^k_{a,b} \rightarrow X^k_{a,b}$ is well defined. 

Next, for the contraction, we first observe that Lemma \ref{lemma1} yields
\begin{equation}\label{Cont}
\begin{split}
\vertiii{\Phi(u)-\Phi(v)} &\leq c\|D^{s_k}_x(u^{k+1}-v^{k+1})\|_{L^1_xL^2_I},
\end{split}
\end{equation}
where we have set
$$
\vertiii{u}=\|u\|_{L^{5k/4}_xL^{5k/2}_I}+\|D^{s_k}_xu\|_{L^{5}_xL^{10}_I}.
$$

Since $u^{k+1}-v^{k+1}=(u-v)(\sum_{j=0}^k u^{k-j}v^j)$, we use the Leibniz rule for fractional derivatives (Lemma \ref{Leibniz}) and Holder's inequality to bound the right hand side of \eqref{Cont} by 
\begin{align*}
&\|D^{s_k}_x(u^{k+1}-v^{k+1})\|_{L^1_xL^2_I} \\ \leq c &\|\sum_{j=0}^k u^{k-j}v^j\|_{L^{5/4}_xL^{5/2}_I}\|D^{s_k}_x(u-v)\|_{L^{5}_xL^{10}_I}\\
&+ c \|(u-v)D^{s_k}_x(\sum_{j=0}^k u^{k-j}v^j)\|_{L^1_xL^2_I}\\
\leq c & (\|u\|^k_{L^{5k/4}_xL^{5k/2}_I}+\|v\|^k_{L^{5k/4}_xL^{5k/2}_I})\|D^{s_k}_x(u-v)\|_{L^{5}_xL^{10}_I}\\
&+ c \|u-v\|^k_{L^{5k/4}_xL^{5k/2}_I}(\|D^{s_k}_xu\|_{L^{5}_xL^{10}_I}\|u\|^{k-1}_{L^{5k/4}_xL^{5k/2}_I}\\
&+ \|D^{s_k}_xv\|_{L^{5}_xL^{10}_I}\|v\|^{k-1}_{L^{5k/4}_xL^{5k/2}_I}+\sum_{j=1}^{k-1}\|D^{s_k}_x(u^{k-j}v^j)\|_{L^{p_0}_xL^{q_0}_I}),
\end{align*}
where $\dfrac{1}{p_0}=1-\dfrac{4}{5k}$ and $\dfrac{1}{q_0}=\dfrac{1}{2}-\dfrac{2}{5k}$.

Moreover
\begin{equation*}
\begin{split}
\|D^{s_k}_xu^{k-j}v^j\|_{L^{p_0}_xL^{q_0}_I}&\leq c \|D^{s_k}_xu^{k-j}\|_{L^{p_j}_xL^{q_j}_I}\|v\|^{j}_{L^{5k/4}_xL^{5k/2}_I}\\
&+\|u\|^{k-j}_{L^{5k/4}_xL^{5k/2}_I}\|D^{s_k}_xv^{j}\|_{L^{\widetilde{p}_j}_xL^{\widetilde{q}_j}_I},
\end{split}
\end{equation*}
where 
$$
\dfrac{1}{p_j}=1-(j+1)\dfrac{4}{5k}, \,\,\,\dfrac{1}{q_j}=\dfrac{1}{2}-(j+1)\dfrac{2}{5k}
$$
and
$$
\dfrac{1}{\widetilde{p}_j}=1-(1+k-j)\dfrac{4}{5k}, \,\,\, \dfrac{1}{\widetilde{q}_j}=\dfrac{1}{2}-(1+k-j)\dfrac{2}{5k}.
$$

On the other hand
$$
\|D^{s_k}_xu^{k-j}\|_{L^{p_j}_xL^{q_j}_I}\leq c \|D^{s_k}_xu\|_{L^{5}_xL^{10}_I}\|u\|^{k-j-1}_{L^{5k/4}_xL^{5k/2}_I}
$$
and
$$
\|D^{s_k}_xv^{j}\|_{L^{\widetilde{p}_j}_xL^{\widetilde{q}_j}_I}\leq c \|D^{s_k}_xv\|_{L^{5}_xL^{10}_I}\|v\|^{j-1}_{L^{5k/4}_xL^{5k/2}_I}.
$$

Finally, collecting the above estimates we conclude
\begin{equation*}
\begin{split}
\vertiii{\Phi(u)-\Phi(v)} &\leq c\|D^{s_k}_x(u-v)\|_{L^{5}_xL^{10}_I}(\|u\|^k_{L^{5k/4}_xL^{5k/2}_I}+\|v\|^k_{L^{5k/4}_xL^{5k/2}_I})\\
&+\|u-v\|^k_{L^{5k/4}_xL^{5k/2}_I}(\|u\|^{k-1}_{L^{5k/4}_xL^{5k/2}_I}\\
&+\|v\|^{k-1}_{L^{5k/4}_xL^{5k/2}_I})(\|D^{s_k}_xu\|_{L^{5}_xL^{10}_I}+\|D^{s_k}_xv\|_{L^{5}_xL^{10}_I}).
\end{split}
\end{equation*}

Therefore, choosing $a$ and $b$ such that $b=2cK$, $\delta=a/2$, $ca^k\leq1/4$ and $ca^{k-1}b\leq1/4$, we establish the contraction property. Finally, since $b=2cK$ and the solution $u$ belongs to $X^k_{a,b}$ we show the inequalities \eqref{i0}. Moreover, by \eqref{HSP}, Lemma \ref{lemma1} $(iii)$ and \eqref{LocEst}, $u\in C(I;\dot{H}^{s_k}(\R))$ with its norm bounded by $b=2cK$.
\end{proof}

\begin{remark} \label{RemMax}
We can define the maximal interval of existence for any solution $u$ of \eqref{gkdv} obtained from Theorem \ref{local}. Indeed, suppose that $u^{(1)}, u^{(2)} \in C(I; \dot{H}^{s_{k}}(\R))$ are two solutions of \eqref{gkdv} on the closed interval $I$ with $u^{(1)}(t_{0}) = u_{0} = u^{(2)}(t_{0})$ for some $t_{0} \in I$. We claim that $u^{(1)} \equiv u^{(2)}$ on $I \times \R$. To see this, let
\begin{equation*}
K = \sup_{t \in I} \max_{i=1,2} \| u^{(i)} \|_{\dot{H}^{s_{k}}}.
\end{equation*}
Since $\| V(t - t_{0}) u_{0} \|_{L^{5k/4}_{x} L^{5k/2}_{t}} \leq c \| u_{0} \|_{\dot{H}^{s_{k}}}$, there exists and interval $\widetilde{I} \subseteq I$ such that $t_{0} \in \widetilde{I}$ and $\| V(t - t_{0}) u_{0} \|_{L^{5k/4}_{x} L^{5k/2}_{\widetilde{I}}} \leq \delta(K)$, where $\delta(K)$ is given by Theorem \ref{local}. By choosing a smaller interval $\widetilde{I}$, if necessary, we can also assume that for $i = 1,2$ we have
\begin{equation}
\label{I12}
\| u^{(i)} \|_{L^{5k/4}_{x} L^{5k/2}_{\widetilde{I}}} \leq a \qquad \text{and} \qquad \| D^{s_{k}}_{x} u^{(i)} \|_{L^{5}_{x} L^{10}_{\widetilde{I}}} \leq b,
\end{equation}
where $a$ and $b$ are obtained in the proof of Theorem \ref{local}. The uniqueness of the fixed point in $X^{k}_{a,b}$ gives us that $u^{(1)} \equiv u^{(2)}$ on $\widetilde{I} \times \R$. Because we can partition the interval $I$ into a finite collection of subintervals $I_{j}$, each of which satisfy the inequalities \eqref{I12}, a continuation argument gives $u^{(1)} \equiv u^{(2)}$ on $I \times \R$.

In view of the above computations, we can define a maximal interval $I(u_{0}) = (t_{0} - T_{-}(u_{0}), t_{0} + T_{+}(u_{0}))$ with $T_{+}(u_{0}), T_{-}(u_{0}) > 0$, where the solution of \eqref{gkdv} with initial data $u(t_{0}) = u_{0}$ is defined. Furthermore, if $T_{1} < t_{0} + T_{+}(u_{0})$ and $T_{2} > t_{0} - T_{-}(u_{0})$ and $T_{2} < t_{0} < T_{1}$, then $u$ solves \eqref{gkdv} in $[T_{2}, T_{1}] \times \R$ with initial data $u(t_{0}) = u_{0}$ and $u \in C([T_{2}, T_{1}]; \dot{H}^{s_{k}}(\R)), u \in L^{5k/4}_{x} L^{5k/2}_{[T_{2}, T_{1}]},$ and $D^{s_{k}}_{x} u \in L^{5}_{x} L^{10}_{[T_{2}, T_{1}]}$.
\end{remark}

As a consequence of Theorem \ref{local} we have the following result.

\begin{corollary}[Small Data Global Theory]\label{smallglobal}
\label{global}
Let $k \geq 4$ and $s_{k} = (k-4)/2k$. There exists $\delta_{k} > 0$ such that for any $u_{0} \in \dot{H}^{s_{k}}(\R)$ with $\| u_{0} \|_{\dot{H}^{s_{k}}} < \delta_{k}$, the corresponding solution $u$ of \eqref{gkdv} with $u(0) = u_{0}$ is global in time. Moreover,
\begin{equation*}
\| u \|_{L^{\infty}_{t} \dot{H}^{s_{k}}} + \| u \|_{L^{5k/4}_{x} L^{5k/2}_{t}} + \| D^{s_{k}}_{x} u \|_{L^{5}_{x} L^{10}_{t}} \leq 2c \| u_{0} \|_{\dot{H^{s_{k}}}}.
\end{equation*}
\end{corollary}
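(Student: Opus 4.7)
The plan is to rerun the Picard iteration behind Theorem \ref{local}, but directly in a ball whose radius is proportional to $\|u_0\|_{\dot{H}^{s_k}}$ rather than to the fixed constant $\delta$. The essential enabling feature is that the key Strichartz estimate \ref{lemma1}(i) is valid on all of $\R$: $\|V(t)u_0\|_{L^{5k/4}_x L^{5k/2}_t} \leq c\|u_0\|_{\dot{H}^{s_k}}$. Together with items (ii)--(iv) of Lemma \ref{lemma1}, this lets us control the linear piece of the Duhamel operator globally in time, and the hypothesis of Theorem \ref{local} is automatic once $\|u_0\|_{\dot{H}^{s_k}}$ is taken small enough.

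More precisely, I set $K = \|u_0\|_{\dot{H}^{s_k}}$ and consider the complete metric space
$$ X = \bigl\{ u \text{ on } \R\times\R \,:\, \|u\|_{L^{5k/4}_x L^{5k/2}_t} \leq 2cK, \quad \|D^{s_k}_x u\|_{L^5_x L^{10}_t} \leq 2cK \bigr\}, $$
with $c$ the constant from Lemma \ref{lemma1}. On $X$ I run the contraction $\Phi(u) = V(t)u_0 - \int_0^t V(t-t')\partial_x(u^{k+1})\,dt'$ from the proof of Theorem \ref{local}. Using Lemma \ref{lemma1}(i)--(iv),
$$ \|\Phi(u)\|_{L^{5k/4}_x L^{5k/2}_t} + \|D^{s_k}_x \Phi(u)\|_{L^5_x L^{10}_t} \leq 2cK + c\,\|D^{s_k}_x(u^{k+1})\|_{L^1_x L^2_t}, $$
and the nonlinear estimate \eqref{LocEst} from the proof of Theorem \ref{local} bounds the Duhamel contribution by $c\|u\|_{L^{5k/4}_x L^{5k/2}_t}^k \|D^{s_k}_x u\|_{L^5_x L^{10}_t} \leq c(2cK)^{k+1}$. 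Choosing $\delta_k$ so small that $c(2c\delta_k)^k \leq 1$ (in fact small enough so the contraction estimate from the proof of Theorem \ref{local} also works in $X$) yields $\Phi : X \to X$ as a contraction, hence a unique global-in-time solution $u\in X$ with the asserted bounds on the two Strichartz norms.

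The $L^\infty_t \dot{H}^{s_k}$ piece of the bound then falls out by applying Lemma \ref{lemma1}(iii) (combined with \eqref{HSP} for the linear part) to the integral equation \eqref{IntEq}, which gives $\|u\|_{L^\infty_t \dot{H}^{s_k}} \leq \|u_0\|_{\dot{H}^{s_k}} + c\|D^{s_k}_x(u^{k+1})\|_{L^1_x L^2_t} \leq K + c(2cK)^{k+1} \leq 2cK$ after possibly shrinking $\delta_k$ once more. Uniqueness in the natural sense is already handled by Remark \ref{RemMax}. There is no real obstacle here; the only subtlety is to resist the temptation to merely quote Theorem \ref{local}, which by itself gives $\|u\|_{L^{5k/4}_x L^{5k/2}_t} \leq 2\delta$ with $\delta$ essentially independent of $\|u_0\|_{\dot{H}^{s_k}}$ --- the point of redoing the fixed point in the smaller ball $X$ is precisely to turn that into the linear-in-$\|u_0\|_{\dot{H}^{s_k}}$ bound required by the statement, which in turn is what makes the space \eqref{a2} available in the forthcoming profile decomposition arguments.
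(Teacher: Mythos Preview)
Your proof is correct and follows essentially the same approach as the paper: run the Picard iteration globally in time in a ball of radius $2c\|u_0\|_{\dot{H}^{s_k}}$, using Lemma \ref{lemma1} and the nonlinear estimate \eqref{LocEst}. The only cosmetic difference is that the paper bundles the $L^\infty_t \dot{H}^{s_k}$ norm into the triple norm $\vertiii{\cdot}$ from the outset and runs the fixed point in that combined space, whereas you handle it a posteriori; both are equivalent here.
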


\begin{proof}
The result follows from the same calculations used in the proof of Theorem \ref{local}. Let
\begin{equation*}
\vertiii{u} :=  \| u \|_{L^{\infty}_{t} \dot{H}^{s_{k}}_{x}} + \| u \|_{L^{5k/4}_{x} L^{5k/2}_{t}} + \| D^{s_{k}}_{x} u \|_{L^{5}_{x} L^{10}_{t}}
\end{equation*}
and define
\begin{equation*}
X^{k}_{a} = \{ u: \R \times \R \to \R \ \vert \  \vertiii{u} \leq a \}.
\end{equation*}
Applying the estimates from Lemma \ref{lemma1} to \eqref{Phi} yields
\begin{equation*}
\vertiii{\Phi(u)} \leq c \| u_{0} \|_{\dot{H^{s_{k}}}} + c \| u \|^{k}_{L^{5k/4}_{x} L^{5k/2}_{t}} \| D^{s_{k}}_{x}u \|_{L^{5}_{x} L^{10}_{t}} \leq c \| u_{0} \|_{\dot{H}^{s_{k}}} + ca^{k+1}.
\end{equation*}
Choosing $a = 2c \| u_{0} \|_{\dot{H}^{s_{k}}}$ and $\| u_{0} \|_{\dot{H}^{s_{k}}}$ such that $2^{k} c^{k+1} \| u_{0} \|_{\dot{H}^{s_{k}}}^{k} < 1/2$ we obtain
\begin{equation*}
\vertiii{\Phi(u)} \leq 2c \| u_{0} \|_{\dot{H}^{s_{k}}}.
\end{equation*}
To see that $\Phi$ is a contraction one uses similar estimates and the proof is completed with standard arguments.
\end{proof}

\begin{remark} 
Taking into account the smallness assumption \eqref{SA} needed to have global solutions in $H^1(\R)$ it is reasonable to conjecture that $\delta_{4} = \|Q\|_{L^2}$ for the critical gKdV equation, where $Q$ is the unique positive radial solution of the elliptic equation \eqref{GSE} and this is an interesting
open problem (see Linares and Ponce \cite{LP09} chapter 8).
\end{remark}

\begin{corollary}\label{AnotherEst}
Let $k \geq 4,\ s_{k} = (k-4)/2k,\ u_{0} \in \dot{H}^{s_{k}}(\R)$ with $\| u_{0} \|_{\dot{H}^{s_{k}}} \leq K$, and $t_{0} \in I$, a time interval. Assume $u$ is a solution of the integral equation \eqref{IntEq} in $I \times \R$ with $u \in C(I; \dot{H}^{s_{k}}(\R))$ satisfying
\begin{equation*}
\| u \|_{L^{5k/4}_{x} L^{5k/2}_{I}} + \| D^{s_{k}}_{x} u \|_{L^{5}_{x} L^{10}_{I}} < M.
\end{equation*}
Then $u$ also satisfies
\begin{equation*}
\| D^{1+s_k}_xu \|_{L^{\infty}_xL^{2}_I} + \| u \|_{L_x^{\frac{k(3k-2)}{3k-4}}L^{3k-2}_{I}} < cK+cM^{k+1},
\end{equation*}
for some positive constant $c$.
\end{corollary}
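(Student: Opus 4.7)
The proof will be a direct application of the refined Strichartz estimates collected in Lemma \ref{lemma12}, using the nonlinear bound already developed in the proof of Theorem \ref{local} (cf.\ inequality \eqref{LocEst}).

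The plan is as follows. Starting from the integral equation \eqref{IntEq}, I split
\begin{equation*}
u(t) = V(t-t_0) u_0 - \int_{t_0}^t V(t-t')\partial_x(u^{k+1})(t')\,dt'
\end{equation*}
and estimate each piece separately in the two target norms. For the linear homogeneous term, Lemma \ref{lemma12} $(i)$ yields
\begin{equation*}
\|D^{1+s_k}_x V(t-t_0) u_0\|_{L^\infty_x L^2_I} \le c\|u_0\|_{\dot H^{s_k}} \le cK,
\end{equation*}
while Lemma \ref{lemma12} $(ii)$ gives the corresponding control
\begin{equation*}
\|V(t-t_0) u_0\|_{L_x^{k(3k-2)/(3k-4)} L^{3k-2}_I} \le cK.
\end{equation*}
For the nonlinear (Duhamel) piece I apply Lemma \ref{lemma12} $(iii)$ and $(iv)$ (applied to $g = u^{k+1}$, together with the fact that $\partial_x$ commutes with $V$), which reduces both quantities to controlling $\|D^{s_k}_x(u^{k+1})\|_{L^1_x L^2_I}$.

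For the latter, I reuse exactly the fractional Leibniz / H\"older estimate from the proof of Theorem \ref{local} (equation \eqref{LocEst}):
\begin{equation*}
\|D^{s_k}_x(u^{k+1})\|_{L^1_x L^2_I} \le c\,\|u\|^{k}_{L^{5k/4}_x L^{5k/2}_I}\,\|D^{s_k}_x u\|_{L^{5}_x L^{10}_I} \le c M^{k+1},
\end{equation*}
the last step using the hypothesis that each factor in the left-hand side is bounded by $M$. Putting everything together gives
\begin{equation*}
\|D^{1+s_k}_x u\|_{L^\infty_x L^2_I} + \|u\|_{L_x^{k(3k-2)/(3k-4)} L^{3k-2}_I} \le cK + cM^{k+1},
\end{equation*}
as desired.

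There is essentially no obstacle: the corollary is a bookkeeping exercise that promotes the a priori control on $u$ in the two Strichartz norms $L^{5k/4}_x L^{5k/2}_I$ and $L^{5}_x L^{10}_I$ (plus the $\dot H^{s_k}$ bound on the data) to additional spaces appearing in Lemma \ref{lemma12}. The only point that requires mild care is matching the spatial/temporal integration orders in Lemma \ref{lemma12} $(iii)$ when passing to the $L^\infty_x L^2_I$ norm of $D^{1+s_k}_x u$; this is handled by the standard $TT^*$/duality pairing between the Kato smoothing estimate in $(i)$ and the inhomogeneous bound, exactly as indicated in the proof of Lemma \ref{lemma12}.
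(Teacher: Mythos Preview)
Your proof is correct and follows exactly the same route as the paper's own argument: apply Lemma~\ref{lemma12} to the integral equation and control the Duhamel term via the bound $\|D^{s_k}_x(u^{k+1})\|_{L^1_xL^2_I}\le c\|u\|^{k}_{L^{5k/4}_xL^{5k/2}_I}\|D^{s_k}_xu\|_{L^5_xL^{10}_I}$ from \eqref{LocEst}. Your remark about the integration order in Lemma~\ref{lemma12}\,$(iii)$ is apt---the inhomogeneous Kato smoothing indeed yields $L^\infty_xL^2_t$ (matching~$(i)$), and the $L^\infty_tL^2_x$ appearing there is a typographical slip in the paper.
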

\begin{proof}
This is a direct consequence of Lemma \ref{lemma12} and the estimate
\begin{equation*}
\begin{split}
\|D^{s_k}_x(u^{k+1})&\|_{L^1_xL^2_I}\leq c \|u\|^k_{L^{5k/4}_xL^{5k/2}_I}\|D_x^{s_k}u\|_{L^{5}_xL^{10}_I},
\end{split}
\end{equation*}
obtained in the proof of Theorem \ref{local}.
\end{proof}

\begin{remark}\label{AnotherEst2}
Let $\{U^j\}_{j\in \mathbb{N}}$ the family of nonlinear profiles given in Theorem \ref{Nprofdec}. Setting
\begin{equation*}
I^j_n=(h_n^{j})^3I_n+t_n^{j}
\end{equation*}
the assumption \eqref{(i)} implies $I^j_n\rightarrow I^j$ such that $U^j$ is well defined in $\overline{I^j}$ (the closure of $I^j$) and satisfies
\begin{equation*}
\| U^j \|_{L^{5k/4}_{x} L^{5k/2}_{\overline{I^j}}} + \| D^{s_{k}}_{x} U^j \|_{L^{5}_{x} L^{10}_{\overline{I^j}}} < \infty.
\end{equation*}
In view of Corollary \ref{AnotherEst} we also deduce
\begin{equation*}
\| D^{1+s_k}_xU^j \|_{L^{\infty}_xL^{2}_{\overline{I^j}}} + \|U^j \|_{L_x^{\frac{k(3k-2)}{3k-4}}L^{3k-2}_{\overline{I^j}}} < \infty
\end{equation*}
and
\begin{equation*}
 \lim_{n\rightarrow \infty} \left(\| D^{1+s_k}_xU_n^j \|_{L^{\infty}_xL^{2}_{I_n}} + \|U_n^j \|_{L_x^{\frac{k(3k-2)}{3k-4}}L^{3k-2}_{I_n}}\right) <\infty,
 \end{equation*} 
if the assumption \eqref{(i)} is satisfied.

Also note that if $I^j=\emptyset$ then \eqref{(i)} is automatically satisfied for this index $j$.
\end{remark}

We end this section with the following finite time blow-up criteria proved in \cite{FP13}.

\begin{theorem}[Theorem 1.4 in \cite{FP13}]
\label{BUR}
Assume $k \geq 4$ and $s_{k}= (k-4)/2k$. Suppose that $u_{0} \in \dot{H}^{s_{k}}$ and $t_{0} \geq 0$. Let $u$ be the solution of \eqref{gkdv} with initial data $u(t_{0}) = u_{0}$ and maximal interval of existence $I(u_{0}) = (t_{0} - T_{-}(u_{0}), t_{0} + T_{+}(u_{0}))$. If $T_{+}(u_{0}) < \infty$, then
\begin{equation}
\label{eq5}
\| u \|_{L^{5k/4}_{x} L^{5k/2}_{[t_{0}, t_{0} + T_{+}(u_{0}))}} = \infty.
\end{equation}
Moreover, if $\sup_{t \in [t_{0}, t_{0} + T_{+}(u_{0}))} \| u(t) \|_{\dot{H}^{s_{k}}} < \infty$, then
\begin{equation}
\label{eq6}
\| D^{2/3k}_{x} u \|_{L^{3k/2}_{x} L^{3k/2}_{[t_{0}, t_{0} + T_{+}(u_{0}))}} = \infty.
\end{equation}
An analogous statement holds for $T_{-}(u_{0})$.
\end{theorem}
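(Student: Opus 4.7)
My plan is to argue both parts by contrapositive, extending the solution strictly past the maximal time $T_{+}^{*} := t_0 + T_+(u_0)$ whenever the relevant Strichartz norm is finite.

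For \eqref{eq5}, suppose $M := \|u\|_{L^{5k/4}_x L^{5k/2}_{[t_0, T_{+}^{*})}} < \infty$. Because $5k/4 \leq 5k/2$, the mixed norm satisfies the subadditivity
\begin{equation*}
\|u\|_{L^{5k/4}_x L^{5k/2}_{A\cup B}}^{5k/4} \leq \|u\|_{L^{5k/4}_x L^{5k/2}_A}^{5k/4} + \|u\|_{L^{5k/4}_x L^{5k/2}_B}^{5k/4},
\end{equation*}
so I partition $[t_0, T_{+}^{*})$ into finitely many consecutive intervals $J_i = [a_i, a_{i+1}]$ on which $\|u\|_{L^{5k/4}_x L^{5k/2}_{J_i}} \leq \eta$, with $\eta$ chosen so that $c\eta^k < 1/2$. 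On each $J_i$, Duhamel's formula combined with Lemma \ref{lemma1} and the nonlinear estimate \eqref{LocEst} gives
\begin{equation*}
\|u\|_{L^{\infty}_{J_i}\dot{H}^{s_k}_x} + \|D^{s_k}_x u\|_{L^{5}_x L^{10}_{J_i}} \leq c\|u(a_i)\|_{\dot{H}^{s_k}} + c\eta^k \|D^{s_k}_x u\|_{L^{5}_x L^{10}_{J_i}},
\end{equation*}
hence $\|u(a_{i+1})\|_{\dot{H}^{s_k}} \leq C\|u(a_i)\|_{\dot{H}^{s_k}}$. Iterating over the finitely many $J_i$ yields $K := \sup_{t \in [t_0, T_{+}^{*})} \|u(t)\|_{\dot{H}^{s_k}} < \infty$ together with $\|D^{s_k}_x u\|_{L^{5}_x L^{10}_{[t_0, T_{+}^{*})}} < \infty$.

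Next I extend past $T_{+}^{*}$. Setting $w(t) := V(-t)u(t)$ and applying Lemma \ref{lemma1}(iii) to the Duhamel integral on $[t_1,t_2]$, I obtain
\begin{equation*}
\|w(t_2) - w(t_1)\|_{\dot{H}^{s_k}} \leq c \|u\|_{L^{5k/4}_x L^{5k/2}_{[t_1,t_2]}}^{k}\|D^{s_k}_x u\|_{L^{5}_x L^{10}_{[t_1,t_2]}},
\end{equation*}
which tends to zero as $t_1, t_2 \to (T_{+}^{*})^{-}$ by absolute continuity of the two finite $x,t$-mixed norms. Hence $w$, and therefore $u$, admits a limit $u^{*} \in \dot{H}^{s_k}(\R)$ at $T_{+}^{*}$; applying Theorem \ref{local} at $t = T_{+}^{*}$ with Cauchy data $u^{*}$ yields an extension of $u$ strictly past $T_{+}^{*}$, contradicting the maximality of $T_+(u_0)$.

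For \eqref{eq6}, given $K = \sup_{t}\|u(t)\|_{\dot{H}^{s_k}} < \infty$, I reduce to the previous part by showing that finiteness of $\|D^{2/(3k)}_x u\|_{L^{3k/2}_x L^{3k/2}_{[t_0,T_{+}^{*})}}$ forces $\|u\|_{L^{5k/4}_x L^{5k/2}_{[t_0,T_{+}^{*})}} < \infty$. The $L^{3k/2}_x L^{3k/2}_t$ norm is additive in time (since it is an $L^{3k/2}$ norm over $x,t$), so I partition $[t_0,T_{+}^{*})$ into subintervals $J_i$ with $\|D^{2/(3k)}_x u\|_{L^{3k/2}_x L^{3k/2}_{J_i}} \leq \eta$. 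On each $J_i$ I run a bootstrap in the scale-invariant quantity
\begin{equation*}
\vertiii{u}_{J_i} := \|u\|_{L^{5k/4}_x L^{5k/2}_{J_i}} + \|D^{s_k}_x u\|_{L^{5}_x L^{10}_{J_i}} + \|D^{2/(3k)}_x u\|_{L^{3k/2}_x L^{3k/2}_{J_i}},
\end{equation*}
controlling $\|D^{s_k}_x(u^{k+1})\|_{L^{1}_x L^{2}_{J_i}}$ by Lemma \ref{Leibniz} and Hölder so that one factor of $D^{s_k}_x u$ is placed in $L^{5}_x L^{10}_{J_i}$ while the remaining $k$ factors of $u$ are distributed among the small $L^{3k/2}_{t,x}$ norm and other Strichartz norms bounded by $K$ via Lemmas \ref{lemma1}, \ref{lemma12}, and \ref{lemma2}. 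For $\eta$ sufficiently small (depending only on $K$), the bootstrap closes and $\vertiii{u}_{J_i} \leq C(K)$. Summing the $5k/4$-powers of the first summand over the finitely many $J_i$ yields $\|u\|_{L^{5k/4}_x L^{5k/2}_{[t_0,T_{+}^{*})}} < \infty$, contradicting \eqref{eq5}. The statement for $T_-(u_0)$ is obtained by running the argument backward in time.

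The hard part is the Hölder-Leibniz decomposition in part \eqref{eq6}: the exponents must be chosen so that $k$ of the $k+1$ copies of $u$ are absorbed into norms controlled either by the small parameter $\eta$ or by the a priori bound $K$, while the remaining copy carries the $s_k$-derivative in a Strichartz-admissible norm that closes the bootstrap. This requires a careful scale-invariant chain of the Leibniz rule and Hölder in the mixed $L^{p}_x L^{q}_t$ spaces, with verification that all intermediate exponents lie in the admissible range of the Strichartz estimates recorded in Lemmas \ref{lemma1}--\ref{lemma2}.
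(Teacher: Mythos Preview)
The paper itself does not give a proof of this theorem; it simply refers to \cite{FP13} and remarks that the case $t_0=0$ there adapts easily. So there is no detailed argument in the paper to compare against, and your contrapositive--extension strategy for \eqref{eq5} is exactly the standard one underlying such references: partition by smallness of $\|u\|_{L^{5k/4}_xL^{5k/2}}$, propagate the $\dot H^{s_k}$ bound via Lemma~\ref{lemma1} and \eqref{LocEst}, and pass to the limit at $T_+^*$ using the Cauchy property of $V(-t)u(t)$. That part is fine.

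The gap is in your treatment of \eqref{eq6}. You write that the $k$ undifferentiated copies of $u$ in $\|D^{s_k}_x(u^{k+1})\|_{L^1_xL^2_{J_i}}$ are ``distributed among the small $L^{3k/2}_{t,x}$ norm and other Strichartz norms bounded by $K$ via Lemmas~\ref{lemma1}, \ref{lemma12}, and \ref{lemma2}.'' But those lemmas bound Strichartz norms of the \emph{linear} flow $V(t-a_i)u(a_i)$ by $cK$; they do not give you a priori control of the corresponding norms of the \emph{nonlinear} solution $u$ on $J_i$. Any such norm you want to use must itself be part of the bootstrap quantity, and then you must check that the inhomogeneous Strichartz estimate for that norm (with right-hand side $\|D^{s_k}_x(u^{k+1})\|_{L^1_xL^2}$) is available. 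Moreover, placing a bare factor of $u$ into the $D^{2/(3k)}_x$--weighted $L^{3k/2}_{x,t}$ norm is not a direct H\"older move: the exponents $(3k/2,3k/2)$ together with the remaining factors do \emph{not} recombine to $L^1_xL^2_t$ without an interpolation step, and for $k=5$ one has $s_k<2/(3k)$, so you cannot simply peel $D^{2/(3k)}_x$ off of $D^{s_k}_x$.

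A clean way to close the bootstrap is to enlarge $\vertiii{\cdot}_{J_i}$ to include $\|D^{-1/k}_x u\|_{L^k_xL^\infty_{J_i}}$ (controlled for the linear piece by Lemma~\ref{lemma12}(i), and for the Duhamel piece by the dual/$TT^*$ argument indicated in the proof of Lemma~\ref{lemma12}), and then use the interpolation
\[
\|u\|_{L^{5k/4}_xL^{5k/2}_{J_i}}\;\le\;c\,\|D^{2/(3k)}_x u\|_{L^{3k/2}_{x,t}(J_i)}^{3/5}\,\|D^{-1/k}_x u\|_{L^{k}_xL^{\infty}_{J_i}}^{2/5},
\]
which injects the small factor $\eta^{3/5}$ into the standard nonlinear estimate \eqref{LocEst}. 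With this, the continuity argument closes for $\eta=\eta(K)$ small and you recover finiteness of $\|u\|_{L^{5k/4}_xL^{5k/2}_{[t_0,T_+^*)}}$, contradicting \eqref{eq5}. Your outline points in the right direction, but as written the phrase ``bounded by $K$'' hides exactly the step that makes \eqref{eq6} nontrivial.
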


\begin{proof}
The argument from \cite{FP13} yields the result in the case when $t_{0} = 0$ and is easily adapted to obtain the result stated above.
\end{proof}

\section{Nonlinear Profile Decomposition} \label{nlpd}

This section is devoted to the proof of Theorem \ref{Nprofdec}. Before proceeding to the proof of Theorem \ref{Nprofdec} we have the following important proposition.

\begin{proposition}[Existence of nonlinear profiles]
\label{existnlpd}
Suppose that $\psi \in \dot{H}^{s_{k}}(\R)$ and that $\{ t_{n} \}_{n \in \N}$ is a sequence with $\lim_{n \to \infty} t_{n} = \overline{t} \in [-\infty, \infty]$. Then there exists a nonlinear profile associated to $(\psi, \{ t_{n} \}_{n \in \N})$.
\end{proposition}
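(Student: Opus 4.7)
The plan is to split the argument into three cases according to whether the limit $\overline{t}$ is finite or $\pm\infty$.

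\textbf{Case 1: $\overline{t}\in\R$.} Here I would simply solve \eqref{gkdv} forward and backward from the time $\overline{t}$ with initial data $u(\overline{t})=V(\overline{t})\psi$. Theorem \ref{local} (together with Remark \ref{RemMax}) produces a solution $u\in C(I;\dot H^{s_k}(\R))$ on a maximal interval $I\ni \overline{t}$. For $n$ large enough we have $t_n\in I$, and the triangle inequality
\[
\|u(t_n)-V(t_n)\psi\|_{\dot H^{s_k}}\le \|u(t_n)-u(\overline t)\|_{\dot H^{s_k}}+\|V(\overline t)\psi-V(t_n)\psi\|_{\dot H^{s_k}}
\]
tends to $0$ as $n\to\infty$ by continuity of $u(\cdot)$ in $\dot H^{s_k}$ and strong continuity of the Airy group in $\dot H^{s_k}$ (see \eqref{HSP}).

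\textbf{Case 2: $\overline{t}=+\infty$.} I would construct $u$ directly as a fixed point of the Duhamel formula ``with data at $+\infty$'':
\[
\Phi(u)(t):=V(t)\psi+\int_{t}^{\infty}V(t-t')\,\partial_x(u^{k+1})(t')\,dt',
\]
on the time interval $[T,\infty)$ for $T$ large. The key observation is that, since $\psi\in\dot H^{s_k}(\R)$, Lemma \ref{lemma1}$(i)$ gives $V(t)\psi\in L^{5k/4}_xL^{5k/2}_t(\R)$, so
\[
\eta(T):=\|V(t)\psi\|_{L^{5k/4}_xL^{5k/2}_{[T,\infty)}}\longrightarrow 0\quad\text{as }T\to\infty.
\]
Running exactly the Strichartz estimates from the proof of Theorem \ref{local} on $[T,\infty)$, with $\|V(t)\psi\|_{L^{5k/4}_xL^{5k/2}_{[T,\infty)}}$ playing the role of the small ``free part'', and using \eqref{LocEst} for the nonlinear contribution, one checks that $\Phi$ maps the set
\[
X^k_{a,b}=\{u:\|u\|_{L^{5k/4}_xL^{5k/2}_{[T,\infty)}}\le a,\ \|D^{s_k}_xu\|_{L^5_xL^{10}_{[T,\infty)}}\le b\}
\]
to itself and is a contraction, provided $a=2\eta(T)$, $b=2c\|\psi\|_{\dot H^{s_k}}$ and $T$ is taken so large that $ca^{k}b\le \eta(T)$ and $ca^{k}\le 1/2$. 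This yields a solution $u\in C([T,\infty);\dot H^{s_k}(\R))$ of \eqref{gkdv} which is extended backward using Theorem \ref{local} and Remark \ref{RemMax} to a maximal interval $I=(a,\infty)\ni t_n$ for large $n$. To get the required convergence, from the equation
\[
D^{s_k}_x\bigl(u(t_n)-V(t_n)\psi\bigr)=\int_{t_n}^{\infty}V(t_n-t')\,\partial_xD^{s_k}_x(u^{k+1})(t')\,dt',
\]
Lemma \ref{lemma1}$(iii)$ gives
\[
\|u(t_n)-V(t_n)\psi\|_{\dot H^{s_k}}\le c\,\|D^{s_k}_x(u^{k+1})\|_{L^1_xL^2_{[t_n,\infty)}}\le c\,\|u\|_{L^{5k/4}_xL^{5k/2}_{[t_n,\infty)}}^{k}\|D^{s_k}_xu\|_{L^5_xL^{10}_{[t_n,\infty)}}
\]
by \eqref{LocEst}. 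Since both Strichartz norms on $[t_n,\infty)$ tend to $0$ by dominated convergence as $t_n\to+\infty$, the right-hand side vanishes, giving the desired property.

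\textbf{Case 3: $\overline{t}=-\infty$.} This is completely symmetric to Case 2: use the analogous fixed point for $\Phi(u)(t)=V(t)\psi-\int_{-\infty}^{t}V(t-t')\partial_x(u^{k+1})(t')\,dt'$ on $(-\infty,-T]$ with $T$ large, and repeat the argument.

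\textbf{Main obstacle.} The only real technical point is Case 2 (equivalently Case 3): setting up the contraction for the ``from-infinity'' Duhamel operator and, crucially, verifying that the resulting solution actually satisfies the asymptotic condition in $\dot H^{s_k}$. The estimates needed are identical in structure to those in the proof of Theorem \ref{local} and Corollary \ref{smallglobal}; the single new ingredient is the decay $\eta(T)\to 0$ coming from the membership $V(t)\psi\in L^{5k/4}_xL^{5k/2}_t(\R)$ provided by Lemma \ref{lemma1}$(i)$, which is what makes the smallness hypothesis of Theorem \ref{local} available despite the fact that $\|\psi\|_{\dot H^{s_k}}$ itself need not be small.
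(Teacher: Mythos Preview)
Your proposal is correct and follows essentially the same route as the paper: solve locally at $\overline t$ when finite, and for $\overline t=\pm\infty$ run the contraction argument of Theorem \ref{local} on the Duhamel map with data at infinity, using that $\|V(t)\psi\|_{L^{5k/4}_xL^{5k/2}_{[T,\infty)}}\to 0$ to gain smallness, then read off the $\dot H^{s_k}$ convergence from the vanishing of $\|D^{s_k}_x(u^{k+1})\|_{L^1_xL^2_{[t_n,\infty)}}$. Your write-up in fact supplies more detail than the paper's (and has the correct sign in the ``from-infinity'' Duhamel operator).
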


\begin{proof}
We consider two cases. If $\overline{t} \in (-\infty, \infty)$, then we use the arguments contained in Remark \ref{RemMax} to find an interval $\widetilde{I}$ containing $\overline{t}$ such that $u \in C(\widetilde{I}; \dot{H}^{s_{k}}(\R))$ solves \eqref{gkdv} with initial data $u_{0} = V(\overline{t}) \psi$. Therefore $u(t_{n}) \to V(\overline{t}) \psi$ as $n \to \infty$. Since we also have $V(t_{n})\psi \to V(\overline{t})\psi$ as $n \to \infty$ we obtain the desired result in this case.

Next suppose that $\overline{t} = +\infty$ (a similar argument applies in the case when $\overline{t} = -\infty$). We solve the integral equation
\begin{equation}
\label{IENLP}
u(t) = V(t)\psi - \int_{t}^{+\infty} V(t - t') \partial_{x}(u^{k+1})(t') dt'
\end{equation}
in $(t_{n_{0}}, +\infty) \times \R$ for $n_{0} \in \N$ large and satisfying
\begin{equation*}
\| V(t) \psi \|_{L^{5k/4}_{x} L^{5k/2}_{[t_{n_{0}}, +\infty)}} < \delta,
\end{equation*}
where $\delta$ is given by Theorem \ref{local}. The same argument that was used in the proof of Theorem \ref{local} now allows us to construct a solution to the integral equation \eqref{IENLP} such that $u \in C([t_{n_{0}}, +\infty); \dot{H}^{s_{k}}(\R))$, $u \in L^{5k/4}_{x} L^{5k/2}_{[t_{n_{0}},+\infty)}$, $D^{s_{k}}_{x} u \in L^{5}_{x} L^{10}_{[t_{n_{0}}, +\infty)}$, and
\begin{equation}
\label{IENLP2}
\| D^{s_{k}}_{x} (u^{n+1}) \|_{L^{1}_{x} L^{2}_{[t_{n_{0}}, +\infty)}} < \infty.
\end{equation}
The Strichartz estimates from Lemma \ref{lemma1} imply that for $n$ sufficiently large we have
\begin{equation*}
\| u(\cdot, t_{n}) - V(t_{n}) \psi \|_{\dot{H}^{s_{k}}} \leq c \| D^{s_{k}}_{x} (u^{k+1}) \|_{L^{1}_{x} L^{2}_{[t_{n},+\infty)}},
\end{equation*}
which goes to zero as $n \to \infty$ by \eqref{IENLP2}.
\end{proof}

\begin{remark}\label{4.2}
Note that if $u^{(1)}, u^{(2)} \in C(I;\dot{H}^{s_{k}}(\R))$ are both nonlinear profiles associated to $(\psi, \{ t_{n} \}_{n \in \N})$ in an interval $I$ containing $\overline{t} = \lim_{n\to \infty} t_{n}$, then $u^{(1)} \equiv u^{(2)}$ on $I \times \R$. This statement is clear if $\overline{t} \in (-\infty, \infty)$ thanks to Remark \ref{RemMax}. If $\overline{t} = +\infty$ (the case when $\overline{t} = -\infty$ being similar), then $u^{(1)}$ and $u^{(2)}$ are solutions of the integral equation \eqref{IENLP} in $(a, +\infty) \times \R$ for some $a \in \R$. Furthermore,  we have $u^{(i)} \in L^{5k/4}_{x} L^{5k/2}_{(a,+\infty)}$ and $D^{s_{k}}_{x} u^{(i)} \in L^{5}_{x} L^{10}_{(a,+\infty)}$. Using the same arguments as those from Remark \ref{RemMax} we conclude the claim.

By this remark we can also define the maximal interval $I$ of existence for the nonlinear profile associated to $(\psi, \{ t_{n} \}_{n \in \N})$.
\end{remark}

\begin{proof}[Proof of Theorem \ref{Nprofdec}]
The proof is based on the ideas developed by Keraani \cite{Keraani2006} to obtain related results involving the $L^2$-critical NLS equation (see also \cite{K01}). 

\smallskip

\noindent{\textbf{Step 1.}} We begin by proving that \eqref{(i)} implies \eqref{(ii)}. Let
\begin{equation}\label{rnJ}
 r_n^J=u_n-\sum_{j=1}^{J}U_n^j - V(t)R_n^J.
\end{equation}

First note that, since $u_n$ and $U^j$ are solutions of the (gKdV) equation, $r_n^J$ satisfies the equation
\begin{equation}\label{rnJ2}
\begin{cases}
\partial_t r_n^J+\partial_x^3r_n^J +\partial_x(f_n^J)=0, \\
r_n^J(x,0)=\sum_{j=i}^{J}(V_n^j-U_n^j)(x,0),
\end{cases}
\end{equation}
where $U_n^j$ is given in relation \eqref{Unj},
\begin{equation*}
 V_n^j(x,t)=\dfrac{1}{(h_n^j)^{2/k}}V\left(\dfrac{t-t_n^j}{(h_n^j)^{3}}\right)\psi^j\left(\dfrac{x-x_n^j}{h_n^j}\right) 
\end{equation*}
and
\begin{equation*}
 f_n^J(x,t)=\left(\sum_{j=1}^{J}U_n^j(x,t) + V(t)R_n^J(x) + r_n^J(x,t)\right)^{k+1}-\sum_{j=1}^{J}\left(U_n^j(x,t)\right)^{k+1}.
\end{equation*}

Letting $I=[a,b]$, the integral equation associated to \eqref{rnJ2} with initial time $a$ is the following
\begin{equation*}
r_n^J(t)=V(t-a)r_n^J(a)-\int_{a}^tV(t-t')\partial_x(f_n^J)(t')dt',
\end{equation*}
for every $t\in [a,b]$.

Define 
$$
\vertiii{r_n^J}_I=\|r_n^J\|_{L^\infty_I\dot{H}_x^{s_k}}+\|r_n^J\|_{L^{5k/4}_xL^{5k/2}_I}+\|D^{s_k}_xr_n^J\|_{L^5_xL^{10}_I}.
$$
By the Strichartz estimates (Lemma \ref{lemma1}) we deduce
\begin{equation}\label{rnJ4}
\vertiii{r_n^J}_I\leq c(\|r_n^J(a)\|_{\dot{H}^{s_k}}+\|D^{s_k}_xf_n^J\|_{L^1_xL^{2}_I}).
\end{equation}
We claim that
\begin{equation}\label{rnJ3}
\lim_{n\rightarrow \infty}\vertiii{r_n^J}_{I_n}\rightarrow 0, \peq \textrm{ as } \peq J\rightarrow 0.
\end{equation}
If \eqref{rnJ3} is true, by \eqref{rnJ} and Strichartz estimates (Lemma \ref{lemma1}) we have
\begin{align*}
&\lim_{n\rightarrow \infty} (\|D_x^{s_k}u_n\|_{L^{5}_xL^{10}_{I_n}} +\|u_n\|_{L_x^{5k/4}L^{5k/2}_{I_n}}) \\
\leq &\sum_{j=1}^{J_0}\lim_{n\rightarrow \infty}(\|D_x^{s_k}U^j_n\|_{L^{5}_xL^{10}_{I_n}} +\|U^j_n\|_{L_x^{5k/4}L^{5k/2}_{I_n}})
+\sup_{n,J}\|R_n^J\|_{\dot{H}^{s_k}}+1,
\end{align*}
for some $J_0\in \N$. Therefore, the assumption \eqref{(i)} and the Pythagorean expansion \eqref{LPYTHA} imply \eqref{(ii)}.

Next, we prove the limit \eqref{rnJ3}. For every interval $I=[a,b]\subseteq I_n$, we estimate the second term in the right hand side of \eqref{rnJ4} by
\begin{align*}
&\|D^{s_k}_xf_n^J\|_{L^1_xL^{2}_I} \\ 
\leq & \left\|D^{s_k}_x\left(\left(\sum_{j=1}^{J}U_n^j\right)^{k+1}-\sum_{j=1}^{J}\left(U_n^j\right)^{k+1}\right)\right\|_{L^1_xL^{2}_{I}}\\
+ &\left\|D^{s_k}_x\left(\left(\sum_{j=1}^{J}U_n^j + V(t)R_n^J\right)^{k+1}-\left(\sum_{j=1}^{J}U_n^j\right)^{k+1}\right)\right\|_{L^1_xL^{2}_{I}}\\
+ &\left\|D^{s_k}_x\left(\left(\sum_{j=1}^{J}U_n^j + V(t)R_n^J + r_n^J(x,t)\right)^{k+1}-\left(\sum_{j=1}^{J}U_n^j+V(t)R_n^J\right)^{k+1}\right)\right\|_{L^1_xL^{2}_{I}}\\
\equiv \  & I_n^J+II_n^J+III_n^J
\end{align*}

The last term is the easiest one to handle. Indeed, using the same computations as in the proof of small data theory (Theorem \ref{local}) and combining with the last two inequalities, we obtain
\begin{equation}
\begin{aligned}
\label{rnJI}
\vertiii{r_n^J}_I
&\leq c(\|r_n^J(a)\|_{\dot{H}^{s_k}}+I_n^J+II_n^J)\\
&+c(\vertiii{r_n^J}^{k+1}_{I}+\vertiii{r_n^J}^{k}_{I}\|D^{s_k}_x(\sum_{j=1}^{J}U_n^j + V(t)R_n^J)\|_{L^{5}_xL^{10}_{I}})\\
&+c\vertiii{r_n^J}^{2}_{I}\|\sum_{j=1}^{J}U_n^j + V(t)R_n^J\|^k_{L^{5k/4}_xL^{5k/2}_{I}}\\
&+c\vertiii{r_n^J}_{I}\|\sum_{j=1}^{J}U_n^j + V(t)R_n^J\|^{k-1}_{L^{5k/4}_xL^{5k/2}_{I}}\|D^{s_k}_x(\sum_{j=1}^{J}U_n^j + V(t)R_n^J)\|_{L^{5}_xL^{10}_{I}}\\
&+c\vertiii{r_n^J}_{I}\|\sum_{j=1}^{J}U_n^j + V(t)R_n^J\|^{k}_{L^{5k/4}_xL^{5k/2}_{I}}.
\end{aligned}
\end{equation}

We claim that 
\begin{equation*}
\Lambda_{n}^{J} := \|\sum_{j=1}^{J}U_n^j + V(t)R_n^J\|_{L^{5k/4}_xL^{5k/2}_{I_n}}+\|D^{s_k}_x(\sum_{j=1}^{J}U_n^j + V(t)R_n^J)\|_{L^{5}_xL^{10}_{I_n}}
\end{equation*}
is uniformly bounded. Indeed, we have the following result.

\begin{lemma}\label{UnifB}
There exists $C>0$ such that
$\displaystyle \limsup_{J\rightarrow \infty} \left ( \limsup_{n\rightarrow \infty} \Lambda_{n}^{J} \right ) < C.$
\end{lemma}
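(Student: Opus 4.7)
The plan is to bound $\Lambda_n^J$ uniformly in $J$ by splitting the profiles into finitely many ``large'' ones and an infinite tail of ``small'' ones, combining hypothesis \eqref{(i)} with the small data theory from Corollary \ref{smallglobal}, and exploiting the asymptotic orthogonality encoded in \eqref{LXT} so that cross terms between distinct profiles vanish as $n\to\infty$. The linear remainder $V(t)R_n^J$ will be absorbed via Lemma \ref{lemma1} and the Pythagorean expansion \eqref{LPYTHA}.

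First I would use \eqref{LPYTHA} together with boundedness of $\{\phi_n\}$ in $\dot H^{s_k}$ to extract $\sum_{j\ge 1}\|\psi^j\|^2_{\dot H^{s_k}}\le C<\infty$. Fixing $\eta\in(0,\delta_k)$ with $\delta_k$ from Corollary \ref{smallglobal}, there exists $J_0=J_0(\eta)$ such that $\|\psi^j\|_{\dot H^{s_k}}<\eta$ for every $j>J_0$. For each such $j$, the construction in Proposition \ref{existnlpd} combined with the small data theory yields a global nonlinear profile with
\begin{equation*}
\|U^j\|_{L^{5k/4}_xL^{5k/2}_t}+\|D^{s_k}_xU^j\|_{L^5_xL^{10}_t}\le 2c\|\psi^j\|_{\dot H^{s_k}},
\end{equation*}
and the scaling invariance of these particular norms under the transformation \eqref{Unj} transfers the bound to $U_n^j$ on any subinterval. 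For the finitely many indices $j\le J_0$, hypothesis \eqref{(i)} provides finite constants $M_j$ directly.

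Next I would decouple the sum through the identities
\begin{equation*}
\|f\|^2_{L^{5k/4}_xL^{5k/2}_t}=\||f|^2\|_{L^{5k/8}_xL^{5k/4}_t},\qquad \|D^{s_k}_xf\|^2_{L^5_xL^{10}_t}=\||D^{s_k}_xf|^2\|_{L^{5/2}_xL^5_t},
\end{equation*}
expanding $\bigl|\sum_{j=1}^J U_n^j\bigr|^2=\sum_{j=1}^J(U_n^j)^2+2\sum_{i<j}U_n^iU_n^j$, and applying the triangle inequality. This reduces the problem to the asymptotic orthogonality
\begin{equation*}
\lim_{n\to\infty}\left(\|U_n^iU_n^j\|_{L^{5k/8}_xL^{5k/4}_{I_n}}+\|D^{s_k}_xU_n^i\,D^{s_k}_xU_n^j\|_{L^{5/2}_xL^5_{I_n}}\right)=0,\qquad i\ne j,
\end{equation*}
which I would establish by approximating $U^i,U^j$ (and their fractional derivatives) in the relevant Strichartz norms by smooth, compactly supported functions and observing that the parameter divergence in \eqref{LXT} forces the supports of the rescaled and translated products to shrink out of any fixed compact set: either $h_n^i/h_n^j+h_n^j/h_n^i\to\infty$, or $|x_n^i-x_n^j|/h_n^i\to\infty$, or $|t_n^i-t_n^j|/(h_n^i)^3\to\infty$. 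Combined with $\|V(t)R_n^J\|_{L^{5k/4}_xL^{5k/2}_t}+\|D^{s_k}_xV(t)R_n^J\|_{L^5_xL^{10}_t}\le c\|R_n^J\|_{\dot H^{s_k}}$ from Lemma \ref{lemma1} and the uniform boundedness of $\|R_n^J\|_{\dot H^{s_k}}$ ensured by \eqref{LPYTHA}, these estimates yield
\begin{equation*}
\limsup_{n\to\infty}\Lambda_n^J\lesssim \Bigl(\sum_{j=1}^{J_0}M_j^2+4c^2\!\!\sum_{j>J_0}\|\psi^j\|^2_{\dot H^{s_k}}\Bigr)^{1/2}+c\sup_n\|\phi_n\|_{\dot H^{s_k}},
\end{equation*}
a bound independent of $J$.

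I expect the main obstacle to be verifying the asymptotic orthogonality at the level of the fractional derivative, since $D^{s_k}_x$ is nonlocal and therefore does not commute with cutoffs or with the rescaling built into $U_n^j$. A density argument together with the fractional Leibniz rule (Lemma \ref{Leibniz}) should allow one to replace $D^{s_k}_xU^j$ by a smooth, compactly supported approximant at a cost controlled by $\|\psi^j\|_{\dot H^{s_k}}$, after which the case analysis on which of the three divergence conditions in \eqref{LXT} is active proceeds by the same change-of-variables argument as in the undifferentiated case.
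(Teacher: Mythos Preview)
Your proposal is correct and follows the same strategy as the paper: bound the linear remainder via Lemma \ref{lemma1} and \eqref{LPYTHA}, split the profiles into a finite head controlled by hypothesis \eqref{(i)} and an $\ell^2$-summable tail controlled by Corollary \ref{smallglobal}, and decouple the sum using the parameter orthogonality \eqref{LXT}. Your closing worry about the nonlocality of $D^{s_k}_x$ is unnecessary: since $D^{s_k}_x$ commutes with translations and intertwines dilations up to a scalar, the functions $D^{s_k}_x U_n^j$ inherit exactly the same pairwise-orthogonal parameter structure as $U_n^j$, so you may approximate $D^{s_k}_x U^j$ directly in $L^5_xL^{10}_t$ by a compactly supported function and run the identical change-of-variables argument, without any appeal to Lemma \ref{Leibniz}.
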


\begin{proof}[Proof of Lemma \ref{UnifB}] 
We first note that by Strichartz estimates (Lemma \ref{lemma1})
\begin{equation}\label{RnJ}
\|V(t)R_n^J\|_{L^{5k/4}_xL^{5k/2}_{t}}+\|D^{s_k}_xV(t)R_n^J\|_{L^{5}_xL^{10}_{t}}
\leq c\|R_n^J\|_{\dot{H}^{s_k}}\leq c,
\end{equation}
for all $J,n\in \N$, since $\{\phi_n\}_{n\in \mathbb{N}}$ is a bounded sequence in $\dot{H}^{s_k}(\R)$ and the asymptotic Pythagorean expansion \eqref{LPYTHA} holds.

Therefore,  we just need to prove
\begin{equation}\label{LimUnj}
\limsup_{J\rightarrow \infty} \left ( \limsup_{n\rightarrow \infty}\|\sum_{j=1}^{J}U_n^j\|_{L^{5k/4}_xL^{5k/2}_{I_n}}+\|D^{s_k}_x(\sum_{j=1}^{J}U_n^j)\|_{L^{5}_xL^{10}_{I_n}} \right ) <+\infty.
\end{equation}

Using the pairwise orthogonality of $(h_n^j, x_n^j, t_n^j)_{n\in \mathbb{N}, j\in \mathbb{N}}$ given in Theorem \ref{Lprofdec} we can prove (see for instance Farah and Versieux \cite{FV15} Lemma 3.1)
\begin{equation}\label{sumUnj}
\limsup_{n\rightarrow \infty} \|\sum_{j=1}^{J}U_n^j\|^{5k/4}_{L^{5k/4}_xL^{5k/2}_{I_n}}
\leq \sum_{j=1}^{J} \|U_n^j\|^{5k/4}_{L^{5k/4}_xL^{5k/2}_{I_n}}, \peq \textrm{ for all} \peq  J\geq 1.
\end{equation}
A similar idea can be used to also deduce
\begin{equation}\label{sumUnj2}
\limsup_{n\rightarrow \infty} \|D^{s_k}_x(\sum_{j=1}^{J}U_n^j)\|^5_{L^{5}_xL^{10}_{I_n}}
\leq \sum_{j=1}^{J} \|D^{s_k}_xU_n^j\|^5_{L^{5}_xL^{10}_{I_n}}, \peq \textrm{ for all} \peq  J\geq 1.
\end{equation}

On the other hand, by Strichartz estimates (Lemma \ref{lemma1}) and the asymptotic Pythagorean expansion \eqref{LPYTHA}
\begin{equation}\label{psij}
\sum_{j\geq 1} \|V(t)\psi^j\|^2_{L^{5k/4}_xL^{5k/2}_{t}}+\sum_{j\geq 1}\|D^{s_k}_xV(t)\psi^j\|^2_{L^{5}_xL^{10}_{t}}
\leq c\sum_{j\geq 1}\|\psi^j\|^2_{\dot{H}^{s_k}}<+\infty.
\end{equation}

Let $\delta_0>0$ given by Definition \ref{DefDelta}. In view of \eqref{psij} there exists $J(\delta_0)>1$ sufficiently large such that
$$
\|\psi^j\|^2_{\dot{H}^{s_k}}<(\delta_0/2)^2 \peq \textrm{ for all} \peq  j\geq J(\delta_0).
$$

Also, by Definition \ref{NLP} and \eqref{HSP}, for every  $j\geq J(\delta_0)$ there exists $T^j\in \R$ such that
\begin{equation*}
\|U^j(T^j)\|^2_{\dot{H}^{s_k}}< \|\psi^j\|^2_{\dot{H}^{s_k}}+(\delta_0/2^{j+1})^2<\delta_0^2.
\end{equation*}

Using the small data global theory (Corollary \ref{global}) with $u_0=U^j(T^j)$ we deduce that $U^j$ is globally defined and
\begin{equation*}
\|U^j\|_{L^{5k/4}_xL^{5k/2}_t}+ \|D^{s_k}_xU^j\|_{L^5_xL^{10}_t}\leq 2c\|U^j(T^j)\|_{\dot{H}^{s_k}}.
\end{equation*}

Collecting the last two inequalities and \eqref{psij}, we have (since $5k/4>2$, for $k>4$)
\begin{equation}\label{sumUj}
\sum_{j\geq J(\delta_0)} \|U^j\|^{5k/4}_{L^{5k/4}_xL^{5k/2}_{t}}+\sum_{j\geq J(\delta_0)}\|D^{s_k}_xU^j\|^{5}_{L^{5}_xL^{10}_{t}} <+\infty.
\end{equation}

We still have to consider a finite number of nonlinear profiles $\{U^j\}_{1\leq j\leq J(\delta_0)}$, however by triangle inequality, for every $n\in \N$
\begin{equation}\label{sumUj2}
\begin{aligned}
& \|\sum_{j=1}^{J(\delta_0)}U_n^j\|_{L^{5k/4}_xL^{5k/2}_{I_n}}+\|D^{s_k}_x(\sum_{j=1}^{J(\delta_0)}U_n^j)\|_{L^{5}_xL^{10}_{I_n}}\\
< &\sum_{j=1}^{J(\delta_0)}\|U_n^j\|_{L^{5k/4}_xL^{5k/2}_{I_n}}+\|D^{s_k}_xU_n^j\|_{L^{5}_xL^{10}_{I_n}}.
\end{aligned}
\end{equation}

Moreover, assumption \eqref{(i)} of Theorem \ref{Nprofdec} imply that the right hand side of the above inequality is finite. 

Finally, in view of \eqref{sumUnj} and \eqref{sumUnj2}, the inequalities \eqref{sumUj} and \eqref{sumUj2} imply \eqref{LimUnj} and we complete the proof of Lemma \ref{UnifB}.
\end{proof}

Returning to the proof of Theorem \ref{Nprofdec}, by Lemma \ref{UnifB} we can bound the right hand side of inequality \eqref{rnJI} by
\begin{equation}\label{rnJI2}
\begin{split}
\vertiii{r_n^J}_I
&\leq c(\|r_n^J(a)\|_{\dot{H}^{s_k}}+I_n^J+II_n^J)\\
&+\vertiii{r_n^J}_{I} \|\sum_{j=1}^{J}U_n^j\|^{k-1}_{L^{5k/4}_xL^{5k/2}_{I}} +\sum_{l=2,k,k+1}\vertiii{r_n^J}^l_{I}. 
\end{split}
\end{equation}

The next two lemmas will help us to complete the proof.
\begin{lemma}\label{LimInJ}
$\displaystyle
\limsup_{n\rightarrow \infty} \peq I_n^J+II_n^J \rightarrow 0, \peq \textrm{ as } \peq  J\rightarrow +\infty
$
\end{lemma}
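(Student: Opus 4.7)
The plan is to handle $I_n^J$ and $II_n^J$ separately: the vanishing of $I_n^J$ will follow from the pairwise divergence property \eqref{LXT} of the sequences $(h_n^j, x_n^j, t_n^j)$, while the vanishing of $II_n^J$ will come from the asymptotic smallness of $V(t)R_n^J$ in Strichartz norms combined with the uniform bound from Lemma \ref{UnifB}.

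For $I_n^J$, I would expand $\bigl(\sum_{j=1}^J U_n^j\bigr)^{k+1} - \sum_{j=1}^J (U_n^j)^{k+1}$ via the multinomial theorem into a finite (depending on $J$) sum of cross terms $c_\alpha \prod_{\ell=1}^{k+1} U_n^{j_\ell}$ whose multi-index $(j_1,\dots,j_{k+1})$ is not constant, so that in each such term at least two distinct profiles appear. After distributing $D^{s_k}_x$ using the fractional Leibniz rule (Lemma \ref{Leibniz}) and applying H\"older's inequality with the exponents used in the contraction estimate of Theorem \ref{local}, each such term in $L^1_x L^2_{I_n}$ is dominated by a product of $L^{5k/4}_x L^{5k/2}_{I_n}$- and $L^5_x L^{10}_{I_n}$-norms containing at least two distinct profiles $U_n^i,\,U_n^{i'}$ (with $D^{s_k}_x$ on at most one of them). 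A change of variables unwinds the scalings and translations, leaving expressions of the form $U^i(y,s)\,U^{i'}\bigl((h_n^i/h_n^{i'})y + (x_n^i-x_n^{i'})/h_n^{i'},\,(h_n^i/h_n^{i'})^3 s + (t_n^i - t_n^{i'})/(h_n^{i'})^3\bigr)$, whose parameters diverge by \eqref{LXT}; a smooth compactly supported approximation of the $U^j$ in $\dot{H}^{s_k}$ (justified by Remark \ref{AnotherEst2}) then forces each such integral to zero. Summing over the finite set of cross terms yields $\limsup_{n\to\infty} I_n^J = 0$ for every fixed $J$.

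For $II_n^J$, I would use the representation
\begin{equation*}
\Bigl(\sum_{j=1}^J U_n^j + V(t)R_n^J\Bigr)^{k+1} - \Bigl(\sum_{j=1}^J U_n^j\Bigr)^{k+1} = (k+1)\,V(t)R_n^J\int_0^1 \Bigl(\sum_{j=1}^J U_n^j + \theta V(t)R_n^J\Bigr)^k d\theta,
\end{equation*}
so that every contribution carries an explicit factor of $V(t)R_n^J$. Applying $D^{s_k}_x$ via Lemma \ref{Leibniz} and then H\"older's inequality produces an estimate of the form
\begin{equation*}
II_n^J \le c\,\Bigl(\|V(t)R_n^J\|_{L^{5k/4}_x L^{5k/2}_{I_n}} + \|D^{s_k}_x V(t)R_n^J\|_{L^5_x L^{10}_{I_n}}\Bigr)\,(\Lambda_n^J)^k,
\end{equation*}
where $(\Lambda_n^J)^k$ is uniformly controlled by Lemma \ref{UnifB}; the first factor vanishes as $J\to\infty$ by \eqref{LSPWNL}--\eqref{LSPWNL2} (together with the Strichartz estimates of Lemma \ref{lemma1}, and interpolation if needed to recover $\|D^{s_k}_x V(t)R_n^J\|_{L^5_x L^{10}_{I_n}}$ from an admissible pair in \eqref{LSPWNL}).

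The principal obstacle is the orthogonality argument inside $I_n^J$: because $D^{s_k}_x$ is nonlocal, it does not commute cleanly with the rescalings and translations defining $U_n^j$. The fractional Leibniz rule circumvents this by placing $D^{s_k}_x$ on a single factor at a time, after which the classical change-of-variables plus \eqref{LXT} argument (combined with smooth truncation of the profiles and Remark \ref{AnotherEst2}) handles each cross term uniformly in $n$, and the finiteness of the multinomial expansion for fixed $J$ finishes the proof.
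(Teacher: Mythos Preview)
There is a genuine gap in your treatment of $II_n^J$. Your estimate
\[
II_n^J \ \le\ c\Bigl(\|V(t)R_n^J\|_{L^{5k/4}_x L^{5k/2}_{I_n}} + \|D^{s_k}_x V(t)R_n^J\|_{L^5_x L^{10}_{I_n}}\Bigr)(\Lambda_n^J)^k
\]
is correct, but the second summand in the bracket does \emph{not} vanish as $J\to\infty$. By Lemma \ref{lemma1}$(ii)$ one only has $\|D^{s_k}_x V(t)R_n^J\|_{L^5_x L^{10}_t}\lesssim \|R_n^J\|_{\dot H^{s_k}}$, and the Pythagorean expansion \eqref{LPYTHA} guarantees merely a uniform bound on $\|R_n^J\|_{\dot H^{s_k}}$, not decay. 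The smallness statements \eqref{LSPWNL}--\eqref{LSPWNL2} concern different norms; the norm $\|D^{s_k}_x\,\cdot\,\|_{L^5_xL^{10}_t}$ is not of the form $\|D^{1/p}_x\,\cdot\,\|_{L^p_tL^q_x}$ with $(p,q)$ admissible, and no interpolation between \eqref{LSPWNL} and the bounded Kato/Strichartz endpoints lands exactly at $(5,10)$. What one does obtain (as in the paper, via \eqref{STR}, Lemma \ref{lemma12}$(i)$, and interpolation) is smallness of $\|D^{s_k}_x V(t)R_n^J\|_{L^{\tilde p}_xL^{\tilde q}_t}$ only for exponents $(\tilde p,\tilde q)$ strictly off $(5,10)$. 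Exploiting this requires the complementary factor $W_n^J=\sum_jU_n^j$ to be placed in $L^{\bar p}_xL^{\bar q}_t$ with $\bar p,\bar q$ correspondingly perturbed away from $(5k/4,5k/2)$; that extra integrability is not available from \eqref{(i)} alone and is supplied in the paper by approximating each $U^j$ by a compactly supported function. Your scheme skips this entire mechanism.

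A parallel issue occurs in your $I_n^J$ argument. After the Leibniz rule distributes $D^{s_k}_x$ onto a single factor, H\"older splits the product into \emph{separate} norms $\|D^{s_k}_x U_n^{j_l}\|_{L^5_xL^{10}}\prod_{r\neq l}\|U_n^{j_r}\|_{L^{5k/4}_xL^{5k/2}}$, each of which is uniformly bounded, so no orthogonality survives. To retain orthogonality one must keep a distinct pair together in a single norm; but then the term $\|D^{s_k}_x(U_n^{j_k}U_n^{j_{k+1}})\|_{L^{p_k}_xL^{q_k}_{I_n}}$ is unavoidable. Your description (``expressions of the form $U^i(y,s)U^{i'}(\ldots)$'') does not cover this term, and a compactly supported approximation of $U^j$ does not make $D^{s_k}_xU^j$ compactly supported. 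The paper handles this by interpolating between $\|U_n^{j_k}U_n^{j_{k+1}}\|_{L^{5k/8}_xL^{5k/4}_{I_n}}$ (which carries the orthogonality) and a higher-regularity norm $\|D^{\sigma_k+s_k}_x(U_n^{j_k}U_n^{j_{k+1}})\|_{L^\alpha_xL^\beta_{I_n}}$ that is uniformly bounded thanks to Corollary \ref{AnotherEst} and Remark \ref{AnotherEst2}.
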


\begin{lemma}\label{PartIn}
For every $\varepsilon>0$, there exist $p\in\N$ (which depends on $\varepsilon$ but not on $n$ and $J$) and a partition of $I_n$
$$
\displaystyle I_n=\bigcup_{i=1}^p I^i_n
$$
such that
\begin{equation*}
\limsup_{n\rightarrow \infty}\|\sum_{j=1}^{J}U_n^j\|_{L^{5k/4}_xL^{5k/2}_{I^i_n}}+\|D^{s_k}_x(\sum_{j=1}^{J}U_n^j)\|_{L^{5}_xL^{10}_{I^i_n}}\leq \varepsilon,
\end{equation*}
for every $1\leq i \leq p$ and every $J\geq 1$.
\end{lemma}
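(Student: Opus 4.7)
The strategy is a \emph{head-plus-tail} decomposition of the profile sum $\sum_{j=1}^J U_n^j$, where the head consists of finitely many ``significant'' profiles ($1 \leq j < J^*$), and the tail ($j \geq J^*$) is small uniformly in time in the relevant Strichartz norms. I would partition $I_n$ according to the natural critical time-scales of the head profiles, pulled back via the scaling \eqref{scaling}, and exploit the orthogonality estimates \eqref{sumUnj}--\eqref{sumUnj2} together with scale invariance to conclude.

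First I would pick $J^* \geq J(\delta_0)$ large enough that
\begin{equation*}
\sum_{j \geq J^*} \bigl( \|U^j\|^{5k/4}_{L^{5k/4}_x L^{5k/2}_t} + \|D^{s_k}_x U^j\|^{5}_{L^{5}_x L^{10}_t} \bigr) < \min\bigl((\varepsilon/4)^{5k/4}, (\varepsilon/4)^{5}\bigr),
\end{equation*}
which is possible because the proof of Lemma \ref{UnifB} established that the full sum $\sum_{j \geq J(\delta_0)}$ is finite via \eqref{LPYTHA} and Corollary \ref{smallglobal}. Next, for each $j = 1, \ldots, J^*-1$, Remark \ref{AnotherEst2} together with the hypothesis gives $\|U^j\|_{L^{5k/4}_x L^{5k/2}_{\overline{I^j}}} + \|D^{s_k}_x U^j\|_{L^{5}_x L^{10}_{\overline{I^j}}} < \infty$. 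Since both norms are continuous and monotone as a function of the time interval (by dominated convergence, after truncating any unbounded tails using the finiteness of the full norm), I can partition $\overline{I^j}$ into $M_j$ intervals $K_{j,1}, \ldots, K_{j,M_j}$ on each of which $\|U^j\|_{L^{5k/4}_x L^{5k/2}_{K_{j,m}}} + \|D^{s_k}_x U^j\|_{L^{5}_x L^{10}_{K_{j,m}}} \leq \eta$, for a small $\eta = \eta(\varepsilon, J^*)$ to be fixed below.

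Rescaling via $\widetilde{K}_{j,m}^n := (h_n^j)^3 K_{j,m} + t_n^j$ yields, for each $j < J^*$, a partition of $\R$ on the $t$-axis. Taking the common refinement and intersecting with $I_n$ produces a partition of $I_n$ into at most $p := \sum_{j=1}^{J^*-1} M_j$ intervals $I_n^1, \ldots, I_n^p$, a count depending only on $\varepsilon$, not on $n$ or $J$. Each $I_n^i$ is contained in some $\widetilde{K}_{j,m(j,i)}^n$ for every $j < J^*$, so by the critical scale invariance of $L^{5k/4}_x L^{5k/2}_t$ and of $\|D^{s_k}_x \cdot\|_{L^{5}_x L^{10}_t}$ under \eqref{scaling}, both $\|U_n^j\|_{L^{5k/4}_x L^{5k/2}_{I_n^i}} \leq \eta$ and $\|D^{s_k}_x U_n^j\|_{L^{5}_x L^{10}_{I_n^i}} \leq \eta$ for all such $j$ and $i$. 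Applying the orthogonality estimates \eqref{sumUnj}--\eqref{sumUnj2} (whose proofs rely only on \eqref{LXT} and so work on any sub-interval) to the head and tail separately yields a head contribution bounded by $(J^*-1)^{4/5k}\eta$ (with an analogous bound for the derivative norm) and a tail bounded by $\varepsilon/4$ (using $\|U_n^j\|_{L^{5k/4}_x L^{5k/2}_{I_n^i}} \leq \|U^j\|_{L^{5k/4}_x L^{5k/2}_t}$ by scale invariance and the choice of $J^*$). Picking $\eta$ so that $(J^*-1)^{4/5k}\eta \leq \varepsilon/4$ and adding via the triangle inequality gives the desired bound $\leq \varepsilon$.

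The hardest step will be the common refinement combined with the transfer of smallness: one must control each head profile on every piece $I_n^i$ \emph{simultaneously}, even though the different profiles live at wildly different scales. The key point is that each $j < J^*$ contributes only $M_j - 1$ breakpoints to the partition of $I_n$ regardless of $n$, keeping $p$ uniform; and a related subtlety is justifying that \eqref{sumUnj}--\eqref{sumUnj2} apply on each small sub-interval $I_n^i$ exactly as on $I_n$, which follows from the proof of those estimates (cf.\ \cite{FV15}) since it uses only the pairwise-divergence property \eqref{LXT}.
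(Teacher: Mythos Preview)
Your approach is essentially the same as the paper's: a head--tail split at some index $J^*=J(\varepsilon)$, a partition of each head profile's time interval into pieces on which that profile is small, a rescaling of those pieces back to the original variables, and a common refinement intersected with $I_n$. The only substantive difference is how you bound the head on each piece $I_n^i$: you invoke the orthogonality estimates \eqref{sumUnj}--\eqref{sumUnj2} to get $(J^*-1)^{4/5k}\eta$, whereas the paper simply uses the triangle inequality to get $(J^*-1)\,\varepsilon/(2J(\varepsilon))\leq\varepsilon/2$ after choosing the per-piece bound to be $\varepsilon/(2J(\varepsilon))$. The triangle inequality is both sufficient and simpler here, and it sidesteps the subtlety you flag about transferring \eqref{sumUnj}--\eqref{sumUnj2} to the $n$-dependent sub-intervals $I_n^i$; since only finitely many head profiles are involved, the sharper orthogonality bound buys nothing.
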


We postpone the proofs of these lemmas for a moment, and continue with the argument for Theorem \ref{Nprofdec}. Without loss of generality, let us assume $I_n\subseteq \R_+$ (since $0\in I_n$ the general case can be completed by considering the positive $I_n\cap\R_+$ and negative $I_n\cap\R_-$ parts of $I_n$). Let $\varepsilon>0$ and consider the partition of $I_n$ given by Lemma \ref{PartIn}, that is
$$
\displaystyle I_n=\bigcup_{i=1}^p I^i_n=[0,a_n^1]\cup[0,a_n^2]\cup \cdots \cup [0,a_n^p).
$$
By inequality \eqref{rnJI2}, for all interval $I_n^i$, we have for $n$ and $J$ large
\begin{equation}\label{rnJI3}
\begin{split}
\vertiii{r_n^J}_{I_n^i}
\leq c(\|r_n^J(a_n^{i-1})\|_{\dot{H}^{s_k}}+I_n^J+II_n^J+\varepsilon\vertiii{r_n^J}_{I_n^i} +\sum_{l=2,k,k+1}\vertiii{r_n^J}^l_{I_n^i},
\end{split}
\end{equation}
where we have set $a_n^{0}=0$.

Choosing $\varepsilon>0$ such that $c\varepsilon<1/2$ we can absorb the linear term in the right hand side of \eqref{rnJI3} to obtain
\begin{equation*}
\begin{split}
\vertiii{r_n^J}_{I_n^i}
\leq c(\|r_n^J(a_n^{i-1})\|_{\dot{H}^{s_k}}+I_n^J+II_n^J+\sum_{l=2,k,k+1}\vertiii{r_n^J}^l_{I_n^i}.
\end{split}
\end{equation*}
In particular, for the interval $I^1_n=[0,a_n^1]$
\begin{equation*}
\begin{split}
\vertiii{r_n^J}_{I_n^1}
\leq c(\|r_n^J(0)\|_{\dot{H}^{s_k}}+I_n^J+II_n^J+\sum_{l=2,k,k+1}\vertiii{r_n^J}^l_{I_n^1}.
\end{split}
\end{equation*}

Since $r_n^J(0)=\sum_{j=i}^{J}(V_n^j-U_n^j)(0)$, by definition of the nonlinear profiles $U_n^i$, we have
$$
\lim_{n\rightarrow \infty} \|r_n^J(0)\|_{\dot{H}^{s_k}} \rightarrow 0, \peq \textrm{ as } \peq J\rightarrow \infty.
$$
Therefore, in view of Lemma \ref{LimInJ} and a bootstrap argument we deduce
$$
\lim_{n\rightarrow \infty} \vertiii{r_n^J}_{I_n^1} \rightarrow 0, \peq \textrm{ as } \peq J\rightarrow \infty.
$$
In particular,
$$
\lim_{n\rightarrow \infty} \|r_n^J(a_n^1)\|_{\dot{H}^{s_k}} \rightarrow 0, \peq \textrm{ as } \peq J\rightarrow \infty
$$

This last limit allows us to repeat the same argument on $I_n^2$. By iterating this process, we obtain
$$
\lim_{n\rightarrow \infty} \vertiii{r_n^J}_{I_n^i} \rightarrow 0, \peq \textrm{ as } \peq J\rightarrow \infty,
$$
for every $1\leq i \leq p$.

Since $p\in\N$ does not depend on $n$ and $J$ we deduce the limit \eqref{rnJ3}.

\smallskip 

\noindent{\textbf{Step 2.}} We now turn to the proof of \eqref{(ii)} implies \eqref{(i)}. Suppose that
\begin{equation*}
\lim_{n \to \infty} \left ( \| D^{s_{k}}_{x} u_{n} \|_{L^{5}_{x} L^{10}_{I_{n}}} + \| u_{n} \|_{L^{5k/4}{x} L^{5k/2}_{I_{n}}} \right ) < \infty
\end{equation*}
and yet \eqref{(i)} fails. This means that there is a smallest $j_{0} \geq 1$ such that
\begin{equation*}
\lim_{n \to \infty} \left ( \| D^{s_{k}}_{x} U_{n}^{j_{0}} \|_{L^{5}_{x} L^{10}_{I_{n}}} + \| U_{n}^{j_{0}} \|_{L^{5k/4}_{x} L^{5k/2}_{I_{n}}} \right ) = \infty. 
\end{equation*}
It follows that
\begin{equation*}
\lim_{n \to \infty} \| D^{s_{k}}_{x} U_{n}^{j_{0}} \|_{L^{5}_{x} L^{10}_{I_{n}}} = \infty \qquad \text{or} \qquad \lim_{n \to \infty} \| U_{n}^{j_{0}} \|_{L^{5k/4}_{x} L^{5k/2}_{I_{n}}} = \infty. 
\end{equation*}

Assume first that
\begin{equation*}
\lim_{n \to \infty} \| U_{n}^{j_{0}} \|_{L^{5k/4}_{x} L^{5k/2}_{I_{n}}} = \infty. 
\end{equation*}
Since $j_{0}$ is the smallest positive integer for which this is true, we obtain
\begin{equation*}
\lim_{n \to \infty} \left \| \sum_{j=1}^{j_{0}} U_{n}^{j} + V(t)R_{n}^{j_{0}} + r_{n}^{j_{0}} \right \|_{L^{5k/4}_{x} L^{5k/2}_{I_{n}}} = \infty.
\end{equation*}
In the last line we have used
\begin{equation*}
\sup_{n} \| V(t) R_{n}^{j_{0}} \|_{L^{5k/4}_{x} L^{5k/2}_{I_{n}}} < \infty \qquad \text{and} \qquad \sup_{n} \| r_{n}^{j_{0}} \|_{L^{5k/4}_{x} L^{5k/2}_{I_{n}}} < \infty,
\end{equation*}
which is true by \eqref{LSPWNL2} and \eqref{NSPWNL}.

Moreover, by \eqref{Decomp} we deduce
\begin{equation*}
u_{n} = \sum_{j=1}^{j_{0}} U_{n}^{j_{0}} + V(t) R_{n}^{j_{0}} + r_{n}^{j_{0}},
\end{equation*}
which implies
\begin{equation*}
\lim_{n \to \infty} \| u_{n} \|_{L^{5k/4}_{x} L^{5k/2}_{I_{n}}} = \infty,
\end{equation*}
a contradiction.

In case
\begin{equation*}
\lim_{n \to \infty} \| D^{s_{k}}_{x} U_{n}^{j_{0}} \|_{L^{5}_{x} L^{10}_{I_{n}}} = \infty,
\end{equation*}
we can repeat the preceding argument, recalling inequalities \eqref{NSPWNL} and \eqref{RnJ}. This completes the proof of Theorem \ref{Nprofdec}.
\end{proof}

Now, we prove Lemmas \ref{LimInJ} and \ref{PartIn}.

\begin{proof}[Proof of Lemma \ref{PartIn}] 
By the proof of Lemma \ref{UnifB}, for every $\varepsilon>0$ sufficiently small, there exists $J(\varepsilon)\geq 1$ such that
\begin{equation*}
\limsup_{n\rightarrow \infty} \left ( \|\sum_{j> J(\varepsilon)}U_n^j\|_{L^{5k/4}_xL^{5k/2}_{t}}+\
\|D^{s_k}_x(\sum_{j> J(\varepsilon)}U_n^j)\|_{L^{5}_xL^{10}_{t}} \right ) \leq \varepsilon/2.
\end{equation*}

It remains to consider a finite number of nonlinear profiles $\{U^j\}_{1\leq j\leq J(\varepsilon)}$. Let $I^j$ denotes the maximal time of existence of $U^j$. By a change of variables and assumption \eqref{(i)} of Theorem \ref{Nprofdec}, we have
$$
\|D_x^{s_k}U^j\|_{L^{5}_xL^{10}_{\frac{I_n-t_n^j}{(h_n^j)^3}}}+\|U^j\|_{L_x^{5k/4}L^{5k/2}_{\frac{I_n-t_n^j}{(h_n^j)^3}}} = \|D_x^{s_k}U_n^j\|_{L^{5}_xL^{10}_{I_n}}+\|U_n^j\|_{L_x^{5k/4}L^{5k/2}_{I_n}} <\infty.
$$

Therefore, there exists a closed interval $\widetilde{I}^j\subseteq I^j$ such that
\begin{equation}\label{Uj}
\|D_x^{s_k}U^j\|_{L^{5}_xL^{10}_{\widetilde{I}^j}}+\|U^j\|_{L_x^{5k/4}L^{5k/2}_{\widetilde{I}^j}}  <\infty.
\end{equation}
and 
$$
\dfrac{I_n-t_n^j}{(h_n^j)^3} \subseteq \widetilde{I}^j,
$$ 
for $n$ large.

By \eqref{Uj}, we can construct a partition of $\widetilde{I}^j= \bigcup_{i=1}^{p_j} \widetilde{I}^j_i$ satisfying
\begin{equation}\label{Uj2}
\|D_x^{s_k}U^j\|_{L^{5}_xL^{10}_{\widetilde{I}_i^j}}+\|U^j\|_{L_x^{5k/4}L^{5k/2}_{\widetilde{I}_i^j}}  < \varepsilon/2J(\varepsilon),
\end{equation}
for every $1\leq i\leq p_j$.

Writing \eqref{Uj2} in terms of $U^j_n$ we have
$$
\|D_x^{s_k}U_n^j\|_{L^{5}_xL^{10}_{\widetilde{I}_{n,i}^j}}+\|U_n^j\|_{L_x^{5k/4}L^{5k/2}_{\widetilde{I}_{n,i}^j}}  < \varepsilon/2J(\varepsilon),
$$
where $\widetilde{I}_{n,i}^j=(h_n^j)^3\widetilde{I}_{i}^j+t_n^j$.

Taking ${I}_{n,i}^j=I_n\cap \widetilde{I}_{n,i}^j$ we construct the partial partition for all $1\leq j\leq J(\varepsilon)$. Finally, intersecting all the partial partitions we obtain the desired final partition, which is independent of $n$ and $J$.
\end{proof}

\begin{proof}[Proof of Lemma \ref{LimInJ}] 
Recall that
\begin{equation*}
I_n^J\leq \left\|D^{s_k}_x\left(\left(\sum_{j=1}^{J}U_n^j\right)^{k+1}-\sum_{j=1}^{J}\left(U_n^j\right)^{k+1}\right)\right\|_{L^1_xL^{2}_{I_n}}
\end{equation*}
and
\begin{equation*}
II_n^J\leq \left\|D^{s_k}_x\left(\left(\sum_{j=1}^{J}U_n^j + V(t)R_n^J\right)^{k+1}-\left(\sum_{j=1}^{J}U_n^j\right)^{k+1}\right)\right\|_{L^1_xL^{2}_{I_n}}.
\end{equation*}

We first consider the term $I_n^J$, which is bounded by a sum of terms of the form (the quantity of terms depends only on $J$ and $k$, but not on $n$)
$$
T_n=\left\|D^{s_k}_x\left(U_n^{j_1}\dots U_n^{j_{k+1}}\right)\right\|_{L^1_xL^{2}_{I_n}},
$$
where not all $j_l$ are equal, say $j_k\neq j_{k+1}$. Using the fractional derivative rule (Lemma \ref{Leibniz}), the above expression can be bounded by
\begin{equation*}
\begin{split}
T_n&\leq \sum_{l=1}^{k-1}\left\|D^{s_k}_xU_n^{j_l}\right\|_{L^5_xL^{10}_{I_n}}\prod_{r\neq l}^{k-1}\left\|U_n^{j_r}\right\|_{L^{5k/4}_xL^{5k/2}_{I_n}}\left\|U_n^{j_k}U_n^{j_{k+1}}\right\|_{L^{5k/8}_xL^{5k/4}_{I_n}}\\
&\qquad +\prod_{r=1}^{k-1}\left\|U_n^{j_r}\right\|_{L^{5k/4}_xL^{5k/2}_{I_n}}\left\|D^{s_k}_x(U_n^{j_k}U_n^{j_{k+1}})\right\|_{L^{p_k}_xL^{q_k}_{I_n}},
\end{split}
\end{equation*}
where 
\begin{equation*}
\frac{1}{p_k}=1-(k-1)\frac{4}{5k}
\qquad \text{and} \qquad  \frac{1}{q_k}=\frac{1}{2}-(k-1)\frac{2}{5k}.
\end{equation*}

To obtain the desired result we need to prove (recall assumption $(i)$ in Theorem \ref{Nprofdec})
\begin{equation}\label{Prodjk}
\left\|U_n^{j_k}U_n^{j_{k+1}}\right\|_{L^{5k/8}_xL^{5k/4}_{I_n}}+\left\|D^{s_k}_x(U_n^{j_k}U_n^{j_{k+1}})\right\|_{L^{p_k}_xL^{q_k}_{I_n}} \rightarrow 0, \peq \textrm{ as } \peq  n\rightarrow \infty.
\end{equation}

Next, we prove that the first term in the left hand side of the last relation goes to zero as $n\rightarrow \infty$. Indeed, since $((h_n^j)_{n\in \N}, (x_n^j)_{n\in \N},  (t_n^j)_{n\in \N})_{j\in \N}$ are pairwise orthogonal (see relation \eqref{LXT}), we have either
\begin{equation}\label{hnxntn}
\lim_{n\rightarrow \infty}
\frac{h^{j_k}_n}{h^{j_{k+1}}_n} + \frac{h^{j_{k+1}}_n}{h^{j_k}_n} =+\infty
\end{equation}
or
\begin{equation}\label{hnxntn2}
h^{j_k}_n=h^{j_{k+1}}_n \,\, \textrm{ and } \,\, \lim_{n\rightarrow \infty} \left|\dfrac{x_n^{j_k}-x_n^{j_{k+1}}}{h_n^{j_k}}\right|+\left|\dfrac{t_n^{j_k}-t_n^{j_{k+1}}}{(h_n^{j_k})^{3}}\right|= \infty.
\end{equation}
By density we can suppose $U^{j_{k}}$ and $U^{j_{k+1}}$ are continuous and compactly supported.  If \eqref{hnxntn} holds,  without loss of generality, we assume
\begin{equation}\label{hnhl}
\frac{h^{j_k}_n}{h^{j_{k+1}}_n} \rightarrow \infty,
\end{equation}
as $n\rightarrow \infty$ (the other case is similar).
Using the change of variables $x=h_n^{j_k}y+x_n^{j_k}$ and $t=(h_n^{j_k})^3s+t_n^{j_k}$ we can rewrite the first term in the right hand side of \eqref{Prodjk} as 
\begin{align}\label{Orthj}
\nonumber
&\left\|U_n^{j_k}U_n^{j_{k+1}}\right\|_{L^{5k/8}_{x}L^{5k/4}_{I_n}}\\
\nonumber
= &\left(\int_{-\infty}^{\infty} \left(\int_{I_n}  
|U^{j_k}_n(x,t)U^{j_{k+1}}_n(x,t)|^{5k/4}dt\right)^{1/2}\!\!\!\!\! dx\right)^{8/5k} \\
= &\left(\frac{h_n^{j_k}}{h_n^{j_{k+1}}}\right)^{2/k}\!\!\left(\int_{-\infty}^{\infty} \left(\int_{I_n^k}  \left|U^{j_k}(y,s)U^{j_{k+1}}\left(y_k, s_k\right)\right|^{5k/4}ds\right)^{1/2}\!\!\!\!\!dy\right)^{8/5k}\!\!\!\!\!\!\!\!\!\!,
\end{align}
where 
\begin{equation*}
y_k=\frac{h_n^{j_{k}}}{h_n^{j_{k+1}}}y+\dfrac{x_n^{j_{k}}-x_n^{j_{k+1}}}{h_n^{j_{k}}}, \qquad  s_k=\left(\frac{h_n^{j_{k}}}{h_n^{j_{k+1}}}\right)^3s+\dfrac{t_n^{j_{k}}-t_n^{j_{k+1}}}{(h_n^{j_{k+1}})^3},
\end{equation*}
\text{and}
\begin{equation*}
I_n^k=\frac{I_n-t_n^{j_k}}{(h_n^{j_k})^3} \subset I^{j_k}\cap I^{j_{k+1}}
\end{equation*}
(here $I^l$ denotes the maximal interval of existence for $U^l$).
Since $U^{j_{k}}$ and $U^{j_{k+1}}$ are compactly supported, we obtain
$$
\left\|U_n^{j_k}U_n^{j_{k+1}}\right\|_{L^{5k/8}_{x}L^{5k/4}_{I_n}}\leq c \left(\frac{h_n^{j_k}}{h_n^{j_{k+1}}}\right)^{2/k},
$$
which implies the desired result by the assumption \eqref{hnhl}.

Now, assume that \eqref{hnxntn2} holds. Since $U^{j_{k}}$ and $U^{j_{k+1}}$ are continuous and compactly supported, in view of \eqref{Orthj} with $h^{j_k}_n=h^{j_{k+1}}_n$, we can apply the Lebesgue's Dominated Convergence to conclude that \eqref{Orthj} goes to zero as $n\rightarrow \infty$. Therefore, we have proved
\begin{equation}\label{LR}
\left\|U_n^{j_k}U_n^{j_{k+1}}\right\|_{L^{5k/8}_xL^{5k/4}_{I_n}} \rightarrow 0, \peq \textrm{ as } \peq  n\rightarrow \infty.
\end{equation}

Next, we treat the other norm in the left hand side of \eqref{Prodjk}. First note, by Remark \ref{AnotherEst2}, that
$$
\|D_x^{1+s_k}U_n^j\|_{L^{\infty}_xL^{2}_{I_n}}+\|U_n^j\|_{L_x^{\frac{k(3k-2)}{3k-4}}L^{3k-2}_{I_n}}<\infty,
$$
for every $j\geq 1$ and independent of $n\in \N$.
Therefore, setting $\sigma_k=\frac{k-4}{k(3k-2)}<1$ (for $k>4$) we obtain by interpolation
$$
\|D_x^{\sigma_k+s_k}U_n^j\|_{L^{a}_xL^{b}_{t}}\leq \|D_x^{1+s_k}U_n^j\|^{\sigma_k}_{L^{\infty}_xL^{2}_{I_n}}\|D_x^{s_k}U_n^j\|^{1-\sigma_k}_{L^{5}_xL^{10}_{I_n}},
$$
where 
\begin{equation*}
\dfrac{1}{a}=\dfrac{1-\sigma_k}{5} \quad \text{and} \quad \dfrac{1}{b}=\dfrac{\sigma_k}{2}+\dfrac{1-\sigma_k}{10}.
\end{equation*}
In view of the fractional Leibniz rule, Lemma \ref{Leibniz}, we deduce
\begin{equation}\label{LR1}
\begin{split}
\left\|D^{\sigma_k+s_k}_x(U_n^{j_k}U_n^{j_{k+1}})\right\|_{L^{\alpha}_xL^{\beta}_{I_n}}&\leq c 
\left\|D^{\sigma_k+s_k}_xU_n^{j_k}\right\|_{L^{a}_xL^{b}_{I_n}}
\left\|U_n^{j_{k+1}}\right\|_{L^{\frac{k(3k-2)}{3k-4}}_xL^{3k-2}_{I_n}} \\
&+c
\left\|D^{\sigma_k+s_k}_xU_n^{j_{k+1}}\right\|_{L^{a}_xL^{b}_{I_n}}
\left\|U_n^{j_{k}}\right\|_{L^{\frac{k(3k-2)}{3k-4}}_xL^{3k-2}_{I_n}}\\
&<\infty
\end{split}
\end{equation}
for every $j\geq 1$ and independent of $n\in \N$, where $$\dfrac{1}{\alpha}=\dfrac{1}{a}+\dfrac{3k-4}{k(3k-2)} \quad \text{and} \quad \dfrac{1}{\beta}=\dfrac{1}{b}+\dfrac{1}{3k-2}.$$

Finally, by interpolation we deduce
\begin{equation}\label{LR2}
\left\|D^{s_k}_x(U_n^{j_k}U_n^{j_{k+1}})\right\|_{L^{p_k}_xL^{q_k}_{I_n}}\leq \left\|U_n^{j_k}U_n^{j_{k+1}}\right\|^{\frac{\sigma_k}{\sigma_k+s_k}}_{L^{5k/8}_xL^{5k/4}_{I_n}} \left\|D^{\sigma_k+s_k}_x(U_n^{j_k}U_n^{j_{k+1}})\right\|^{\frac{s_k}{\sigma_k+s_k}}_{L^{\alpha}_xL^{\beta}_{I_n}},
\end{equation}
since 
$$
\dfrac{1}{p_k}=\left(\dfrac{\sigma_k}{\sigma_k+s_k}\right)\dfrac{8}{5k}+\left(\dfrac{s_k}{\sigma_k+s_k}\right)\dfrac{1}{\alpha}
$$ 
and
$$
\dfrac{1}{q_k}=\left(\dfrac{\sigma_k}{\sigma_k+s_k}\right)\dfrac{4}{5k}+\left(\dfrac{s_k}{\sigma_k+s_k}\right)\dfrac{1}{\beta}.
$$
Thus, in view of \eqref{LR2}, \eqref{LR1} and \eqref{LR} we conclude the proof of \eqref{Prodjk}.

Now, we consider the term $II_n^J$. To simplify the notation denote 
\begin{equation*}
W_n^J=\sum_{j=1}^JU_n^j. 
\end{equation*}
It is clear that
$$
(V(t)R_n^J+W_n^J)^{k+1}-(W_n^J)^{k+1}=\sum_{j=0}^kc_j(V(t)R_n^J)^{k+1-j}(W_n^J)^{j}.
$$
Therefore the term $II_n^J$ can be bounded by a sum of terms of the form (the quantity of terms depends only on $k$, but not on $n$ and $J$)
$$
S_n=\left\|D^{s_k}_x\left(f_n^1\dots f_n^{k}(V(t)R_n^J)\right)\right\|_{L^1_xL^{2}_{I_n}},
$$
where $f_n^l$ are equal to $V(t)R_n^J$ or $W_n^J$.

Again, using the fractional derivative rule (Lemma \ref{Leibniz}), we have the bound
\begin{equation*}
\begin{split}
S_n&\leq \sum_{l=1}^{k-1}\left\|D^{s_k}_xf_n^{l}\right\|_{L^5_xL^{10}_{I_n}}\prod_{r\neq l}^{k-1}\left\|f_n^{r}\right\|_{L^{5k/4}_xL^{5k/2}_{I_n}}\left\|f_n^{k}(V(t)R_n^J)\right\|_{L^{5k/8}_xL^{5k/4}_{I_n}}\\
&+\prod_{r=1}^{k-1}\left\|f_n^{r}\right\|_{L^{5k/4}_xL^{5k/2}_{I_n}}\left\|D^{s_k}_x(f_n^{k}(V(t)R_n^J))\right\|_{L^{p_k}_xL^{q_k}_{I_n}},
\end{split}
\end{equation*}
where 
\begin{equation*}
\frac{1}{p_k}=1-(k-1)\frac{4}{5k} \qquad \text{and} \qquad \frac{1}{q_k}=\frac{1}{2}-(k-1)\frac{2}{5k}
\end{equation*}

By relations \eqref{RnJ} and \eqref{LimUnj}, Holder's inequality and fractional derivative rule (Lemma \ref{Leibniz})
$$
S_n \leq c \left\|V(t)R_n^J\right\|_{L^{5k/4}_xL^{5k/2}_{I_n}} +c \left\|f_n^{k}D^{s_k}_x(V(t)R_n^J)\right\|_{L^{p_k}_xL^{q_k}_{I_n}}.
$$
In view of \eqref{LSPWNL2}, the desired result follows if we prove
\begin{equation}\label{fnkD}
\limsup_{n\rightarrow \infty} \left\|f_n^{k}D^{s_k}_x(V(t)R_n^J)\right\|_{L^{p_k}_xL^{q_k}_{I_n}} \rightarrow 0,  \peqq \textrm{as} \peqq J\rightarrow \infty.
\end{equation}

If $f_n^{k}=V(t)R_n^J$ it is true by Holder's inequality and \eqref{LSPWNL2}. So the interesting case is when $f_n^{k}=W_n^J$.
For all $J>J_0\geq 1$, by inequality \eqref{RnJ} we have
\begin{equation}\label{WnJD}
\left\|W_n^JD^{s_k}_x(V(t)R_n^J)\right\|_{L^{p_k}_xL^{q_k}_{I_n}} \leq \sum_{j=1}^{J_0}\left\|U_n^jD^{s_k}_x(V(t)R_n^J)\right\|_{L^{p_k}_xL^{q_k}_{I_n}} + c \left\|\sum_{j>J_0}^{J}U_n^j\right\|_{L^{5k/4}_xL^{5k/2}_{I_n}}
\end{equation}

In \eqref{sumUj}, we proved that $U^j$ is globally defined for every $j$ sufficiently large and moreover, for every $\varepsilon>0$ there exists $J(\varepsilon)\geq 1$ such that
$$
\sum_{j\geq J(\varepsilon)}\left\|U^j\right\|^{5k/4}_{L^{5k/4}_xL^{5k/2}_{t}}\leq \varepsilon^{5k/4}.
$$

By \eqref{sumUnj} and the fact that $\left\|U_n^j\right\|^{5k/4}_{L^{5k/4}_xL^{5k/2}_{t}}=\left\|U^j\right\|^{5k/4}_{L^{5k/4}_xL^{5k/2}_{t}}$, for every $J\geq J(\varepsilon)$ we deduce
$$
\limsup_{n\rightarrow \infty} \left\|\sum_{j>J(\varepsilon)}^{J}U_n^j\right\|_{L^{5k/4}_xL^{5k/2}_{I_n}}\leq \sum_{j\geq J(\varepsilon)}\left\|U^j\right\|^{5k/4}_{L^{5k/4}_xL^{5k/2}_{t}}\leq \varepsilon^{5k/4}.
$$
Thus, by \eqref{WnJD}, we have to prove that
\begin{equation}\label{UnjD}
\limsup_{n\rightarrow \infty} \left\|U_n^{j}D^{s_k}_x(V(t)R_n^J)\right\|_{L^{p_k}_xL^{q_k}_{I_n}} = 0,
\end{equation}
for every $1\leq j\leq J(\varepsilon)$ and since $\varepsilon>0$ is arbitrary we conclude \eqref{fnkD}.
Applying the change of variables $x=h_n^{j}y+x_n^{j}$ and $t=(h_n^{j})^3s+t_n^{j}$ we have
\begin{equation}\label{UnjD2}
\left\|U_n^{j} D^{s_k}_x(V(t)R_n^J)\right\|_{L^{p_k}_xL^{q_k}_{I_n}}=\left\|U^{j}D^{s_k}_xw_n^J\right\|_{L^{p_k}_xL^{q_k}_{I^j_n}},
\end{equation}
where $\displaystyle I_n^j=\frac{I_n-t_n^{j}}{(h_n^{j})^3} $ and $w_n^J(s,y) = (h_n^j)^{2/k}V((h_n^{j})^3s+t_n^{j})R_n^J(h_n^{j}y+x_n^{j})$.

A simple computation reveals
\begin{equation}\label{wnj}
\left\|w_n^J\right\|_{L^{5k/4}_xL^{5k/2}_{t}}=\left\|V(t)R_n^J\right\|_{L^{5k/4}_xL^{5k/2}_{t}}
\end{equation}
and 
\begin{equation}\label{Dwnj}
\left\|D^{s_k}_xw_n^J\right\|_{L^{5}_xL^{10}_{t}}=\left\|D^{s_k}_x(V(t)R_n^J)\right\|_{L^{5}_xL^{10}_{t}}.
\end{equation}
We claim that
\begin{equation}\label{LimDwnj}
\limsup_{n\rightarrow \infty} \left\|D^{s_k}_xw_n^J\right\|_{L^{\widetilde{p}}_xL^{\widetilde{q}}_{t}} \rightarrow 0,  \peqq \textrm{as} \peqq J\rightarrow \infty,
\end{equation}
for some $\widetilde{p}$ close to $5$ and $\widetilde{q}$ close to $10$.

Assuming the limit \eqref{LimDwnj} for a moment, let us conclude the proof of \eqref{UnjD}. Given $\varepsilon>0$, let $U^j_{\varepsilon}\in C_0^{\infty}(\R^2)$ such that
$$
\left\|U^j-U^j_{\varepsilon}\right\|_{L^{5k/4}_xL^{5k/2}_{\bar{I}^j}}<\varepsilon,
$$
where $\displaystyle \bar{I}^j=\lim_{n\rightarrow \infty} \frac{I_n-t_n^{j}}{(h_n^{j})^3}$.

Therefore, using relations \eqref{UnjD2}, \eqref{wnj} and \eqref{Dwnj} we have
\begin{align*}
&\left\|U_n^{j} D^{s_k}_x(V(t)R_n^J)\right\|_{L^{p_k}_xL^{q_k}_{I_n}}=\left\|U^{j}D^{s_k}_xw_n^J\right\|_{L^{p_k}_xL^{q_k}_{I^j_n}}\\
\leq &\left\|U^j-U^j_{\varepsilon}\right\|_{L^{5k/4}_xL^{5k/2}_{\bar{I}^j}}\left\|D^{s_k}_xw_n^J\right\|_{L^{5}_xL^{10}_{t}}+\left\|U_{\varepsilon}^{j}D^{s_k}_xw_n^J\right\|_{L^{p_k}_xL^{q_k}_{I^j_n}}\\
< &\varepsilon\left\|D^{s_k}_x(V(t)R_n^J)\right\|_{L^{5}_xL^{10}_{t}}+C_\varepsilon\left\|D^{s_k}_xw_n^J\right\|_{L^{\widetilde{p}}_xL^{\widetilde{q}}_{t}},
\end{align*}
where $C_\varepsilon=\left\|U^j_{\varepsilon}\right\|_{L^{\bar{p}}_xL^{\bar{q}}_{\bar{I}^j}}$, with 
\begin{equation*} \frac{1}{p_k}=\frac{1}{\widetilde{p}}+\frac{1}{\bar{p}} 
\qquad \text{and} \qquad
\frac{1}{q_k}=\frac{1}{\widetilde{q}}+\frac{1}{\bar{q}}.
\end{equation*}
Since $\varepsilon>0$ is arbitrary, by \eqref{RnJ} and \eqref{LimDwnj} we obtain \eqref{UnjD}.

To complete the proof we need to deduce \eqref{LimDwnj}. Let us first consider the case $k\geq 6$. Recall the sharp version of Kato's smoothing effect given in Lemma \ref{lemma12} $(i)$
\begin{equation}\label{KatoS}
\left\|D^{1+s_k}_xV(t)u_0\right\|_{L^{\infty}_xL^{2}_{t}}\leq c\|u_0\|_{\dot{H}^{s_k}}.
\end{equation}
Since $2/3k<s_k<1+s_k$ (for $k\geq 6$) interpolating inequalities \eqref{KatoS} and \eqref{STR} we can find $\theta \in (0,1)$, $a,b \in (1, \infty)$ such that
\begin{equation}\label{Interp}
\left\|D^{s_k}_xV(t)u_0\right\|_{L^{a}_xL^{b}_{t}}\leq c\|D^{2/3k}_xV(t)u_0\|_{L_{x,t}^{3k/2}}^{\theta}\left\|D^{1+s_k}_xV(t)u_0\right\|_{L^{\infty}_xL^{2}_{t}}^{1-\theta},
\end{equation}
where
\begin{equation*}
 \frac{2}{3k}\theta+(1+s_k)(1-\theta)=s_k, \quad \frac{1}{a}=\frac{2\theta}{3k}, \quad \text{and} \quad  \frac{1}{b}=\frac{2\theta}{3k}+\frac{1-\theta}{2}.
\end{equation*}

On the other hand, by Lemma \ref{lemma1}, we also have the Strichartz estimate 
$$
\|D^{s_k}_xV(t)u_0\|_{L^{5}_xL^{10}_t}\leq c\|u_0\|_{\dot{H}^{s_k}}.
$$
Interpolating again, we obtain for all $\delta\in (0,1)$
\begin{equation}\label{STR3}
\begin{split}
\left\|D^{s_k}_xV(t)u_0\right\|_{L^{\widetilde{p}_\delta}_xL^{\widetilde{q}_\delta}_{t}}&\leq c\|D^{s_k}_xV(t)u_0\|_{L^{5}_xL^{10}_t}^{\delta}\left\|D^{s_k}_xV(t)u_0\right\|_{L^{a}_xL^{b}_{t}}^{1-\delta}\\
&\leq c\|u_0\|_{\dot{H}^{s_k}}^{\delta + (1-\delta)(1-\theta)}\|D^{2/3k}_xV(t)u_0\|_{L_{x,t}^{3k/2}}^{\theta(1-\delta)},
\end{split}
\end{equation}
 where 
\begin{equation*} 
\frac{1}{\widetilde{p}_\delta}=\frac{\delta}{5}+\frac{1-\delta}{a}=\frac{\delta}{5}+\frac{2\theta(1-\delta)}{3k} \quad \text{and} \quad  \frac{1}{\widetilde{q}_\delta}=\frac{\delta}{10}+\frac{2\theta(1-\delta)}{3k}+\frac{(1-\theta)(1-\delta)}{2}.
\end{equation*}
A direct calculation yields
\begin{equation}\label{wnj2}
\|D^{2/3k}_xw_n^J\|_{L_{x,t}^{3k/2}}=\left\|D^{2/3k}_xV(t)R_n^J\right\|_{L_{x,t}^{3k/2}}
\end{equation}
and
\begin{equation}\label{wnj3}
\left\|D^{1+s_k}_xw_n^J\right\|_{L^{\infty}_xL^{2}_{t}}=\left\|D^{1+s_k}_x(V(t)R_n^J)\right\|_{L^{\infty}_xL^{2}_{t}}.
\end{equation}
Hence, combining \eqref{Interp}, \eqref{STR3}, \eqref{Dwnj}, \eqref{wnj2} and \eqref{wnj3} we obtain
$$
\left\|D^{s_k}_xw_n^J\right\|_{L^{\widetilde{p}_\delta}_xL^{\widetilde{q}_\delta}_{t}}\leq c\|R_n^J\|_{\dot{H}^{s_k}}^{\delta + (1-\delta)(1-\theta)}\|D^{2/3k}_xV(t)R_n^J\|_{L_{x,t}^{3k/2}}^{\theta(1-\delta)},
$$
for all $\delta\in (0,1)$.
By \eqref{RnJ} and \eqref{LSPWNL} (with $\displaystyle p=q=\frac{3k}{2}$), we deduce
$$
\limsup_{n\rightarrow \infty} \left\|D^{s_k}_xw_n^J\right\|_{L^{\widetilde{p}_\delta}_xL^{\widetilde{q}_\delta}_{t}} \rightarrow 0,  \peqq \textrm{as} \peqq J\rightarrow \infty.
$$
Finally, taking $\delta$ close to 1 we obtain that $\widetilde{p}_\delta$ is close to $5$ and $\widetilde{q}_\delta$ is close to $10$ which implies the claim \eqref{LimDwnj} in the case $k\geq 6$.

When $k=5$ we cannot apply the previous argument since $s_k< 2/3k$ in this case. However, if we replace the inequality \eqref{KatoS} by (see Lemma \ref{lemma12} $(i)$)
$$
\left\|D_x^{-1/k}V(t)u_0\right\|_{L^{k}_xL^{\infty}_{t}}\leq c\|u_0\|_{\dot{H}^{s_k}}
$$
we can carry out the same computations as before, observing that $-1/k <s_k< 2/3k$ for $k=5$, and also obtain the limit \eqref{LimDwnj}. This completes the proof of Lemma \ref{LimInJ}.

\end{proof}

\section{Concentration} \label{conc}

In this section we prove Theorem \ref{Concentration}.

\begin{proof}[Proof of Theorem \ref{Concentration}]
Let $u$ be a blowing up solution for \eqref{gkdv} at finite time $T^{\ast}<\infty$, and let $\{t_n\}_{n\in \N}$ be a sequence of times such that $t_{n} \rightarrow T^{\ast}$. Set
$$
u_n(x,t)=u(x, t_n+t).
$$
Since $u$ is defined in $[0,T^{\ast})$, $u_n$ is defined in $[-t_n,T^{\ast}-t_n)$. Also, the finite time blow-up criteria in Theorem \ref{BUR} yields
\begin{equation}\label{unblow}
\lim_{n\rightarrow \infty}\|u_n\|_{L^{5k/4}_xL^{5k/2}_{[0,T^{*}-t_n)}}=\lim_{n\rightarrow \infty}\|u_n\|_{L^{5k/4}_xL^{5k/2}_{[-t_n,0)}}=\infty.
\end{equation}

By the assumption \eqref{TypeII1}, the sequence $\{u_n(\cdot,0)\}_{n\in \N}=\{u(\cdot,t_n)\}_{n\in \N}$ is bounded in $\dot{H}^{s_k}(\R)$. Applying the Linear Profile Decomposition, Theorem \ref{Lprofdec}, for this sequence, we obtain (up to a subsequence) a sequence of functions $\{\psi^j\}_{j\in \mathbb{N}}\subset \dot{H}^{s_k}(\R)$ and sequences of parameters $(h_n^j, x_n^j, t_n^j)_{n\in \mathbb{N}, j\in \mathbb{N}}$ such that for every $J\geq1$ there exists $\{R^J_n\}_{n,J\in\mathbb{N}}\subset \dot{H}^{s_k}(\R)$ satisfying \eqref{SUM}, \eqref{LSPWNL2} and \eqref{LPYTHA}.

Considering the sequence of intervals $I_n=[0,T^{*}-t_n)$. In view of \eqref{unblow}, the Nonlinear Profile Decomposition, Theorem \ref{Nprofdec}, implies that there exists some $j_0\in \N$ such that $U^{j_0}$ (the nonlinear profile associated with $(\psi^{j_0}, \{-t^{j_0}_n/(h_n^{j_0})^3\}_{n\in \N})$) satisfies
\begin{equation}\label{Unblow}
\lim_{n\rightarrow \infty} \|D_x^{s_k}U_n^{j_0}\|_{L^{5}_xL^{10}_{I_n}}+\|U_n^{j_0}\|_{L_x^{5k/4}L^{5k/2}_{I_n}} =\infty.
\end{equation}

We claim that if $u$ is a solution for the gKdV equation \eqref{gkdv} such that, for some interval $S$, $\|u\|_{L_x^{5k/4}L^{5k/2}_{S}}<\infty$ then we must have $\|D_x^{s_k}u\|_{L^{5}_xL^{10}_{S}}<\infty$. Indeed, let $\varepsilon_0>0$ be an arbitrary small number. Since the norm $\|u\|_{L^{5k/4}_xL^{5k/2}_{S}}$ is finite we can find a partition of the interval $S$, namely $t_0<t_1<\ldots<t_\ell$, such that
$\|u\|_{L^{5k/4}_xL^{5k/2}_{S_n}}<\varepsilon_0$, where $S_n=[t_n,t_{n+1}]$,
$n=0,\dots,\ell-1$. Since $u$ is a solution of the integral equation
\eqref{IntEq}, from Lemma \ref{lemma1} $(ii)$ we deduce
\begin{equation*}
\begin{split}
\|D_x^{s_k}u\|_{L^5_xL^{10}_{S_n}}&\leq
c\| u_{0} \|_{\dot{H}^{s_{k}}}+c \left \|D_x^{s_k}\int_0^tU(t-t')\partial_x(u^{k+1})(t')dt' \right \|_{L^5_xL^{10}_{S_n}}\\
&\leq c\| u_{0} \|_{\dot{H}^{s_{k}}}+c\sum_{j=0}^n \left \|D_x^{s_k}\int_{t_j}^{t_{j+1}}U(t-t')\partial_x(u^{k+1})(t')dt' \right \|_{L^5_xL^{10}_{S_j}}\\
&\leq c\| u_{0} \|_{\dot{H}^{s_{k}}}+c\sum_{j=0}^n \left \|D_x^{s_k}\int_{0}^{t}U(t-t')\partial_x(u^{k+1})(t')\chi_{I_j}(t')dt' \right \|_{L^5_xL^{10}_{t}},\\
\end{split}
\end{equation*}
where $\chi_{S_j}$ denotes the characteristic function of the interval $S_j$. From Lemma \ref{lemma1} $(iii)$ and similar computations as the ones in the proof of Theorem \ref{local} we obtain
\begin{equation*}
\begin{split}
\|D_x^{s_k}u\|_{L^5_xL^{10}_{S_n}}&\leq
c\| u_{0} \|_{\dot{H}^{s_{k}}}+c\sum_{j=0}^n\|D_x^{s_k}u\|_{L^5_xL^{10}_{S_j}}\|u\|_{L^{5k/4}_xL^{5k/2}_{S_j}}^k\\
&\leq
c\| u_{0} \|_{\dot{H}^{s_{k}}}+c\varepsilon_0^k\sum_{j=0}^n\|D_x^{s_k}u\|_{L^5_xL^{10}_{S_j}}.
\end{split}
\end{equation*}
Therefore, choosing $c\varepsilon_0^k<1/2$, we conclude
\begin{equation}\label{eq9.3a}
\begin{split}
\|D_x^{s_k}u\|_{L^5_xL^{10}_{S_n}}\leq
2c\| u_{0} \|_{\dot{H}^{s_{k}}}+2\sum_{j=0}^{n-1}\|D_x^{s_k}u\|_{L^5_xL^{10}_{S_j}}.
\end{split}
\end{equation}
Inequality \eqref{eq9.3a} and an induction argument implies that
$\|D_x^{s_k}u\|_{L^5_xL^{10}_{S_n}}<\infty$ for $n=0,\dots,\ell-1$. By summing
over the $\ell$ intervals we conclude the claim.

Now, since
$$
\|D_x^{s_k}U_n^{j_0}\|_{L^{5}_xL^{10}_{I_n}}=\|D_x^{s_k}U^{j_0}\|_{L^{5}_xL^{10}_{I^{j_0}_n}}
$$
and 
$$
\|U_n^{j_0}\|_{L_x^{5k/4}L^{5k/2}_{I_n}}=\|U^{j_0}\|_{L_x^{5k/4}L^{5k/2}_{I^{j_0}_n}}
$$
where $I^{j_0}_n=\left[-\frac{t^{j_0}_n}{(h_n^{j_0})^3}, \frac{T^{\ast}-t_n}{(h_n^{j_0})^3}-\frac{t^{j_0}_n}{(h_n^{j_0})^3} \right)$, the condition \eqref{Unblow} implies
\begin{equation}\label{Unblow2}
\lim_{n\rightarrow \infty}\|U^{j_0}\|_{L_x^{5k/4}L^{5k/2}_{I^{j_0}_n}}=\infty
\end{equation}
In particular $\|\psi^{j_0}\|_{\dot{H}^{s_k}}\geq \delta_0$, otherwise $U^{j_0}$ is globally defined and then the above limit is false.

Let $t^{j_0}=\lim_{n\rightarrow \infty} -\frac{t^{j_0}_n}{(h_n^{j_0})^3}$. We have that $t^{j_0} \neq +\infty$, otherwise $I_n^{j_0}\rightarrow \emptyset$ (recall that $\{h_n^{j_0}\}_{n\in \N}$ is a sequence of positive numbers) and the limit \eqref{Unblow2} cannot be true in this case.

By the asymptotic Pythagorean expansion \eqref{LPYTHA} it follows that
\begin{equation}\label{PYTHALim}
\liminf_{n\rightarrow \infty} \|u(t_n)\|^2_{\dot{H}^{s_k}}\geq \sum_{j=1}^{\infty}\|\psi^j\|^2_{\dot{H}^{s_k}}.
\end{equation}

If there exists another $j_1\neq j_0$ such that \eqref{Unblow} holds then $\|\psi^{j_1}\|_{\dot{H}^{s_k}}\geq \delta_0$ and the inequality \eqref{PYTHALim} implies $\sup_{n\in \N} \|u(t_n)\|_{\dot{H}^{s_k}}\geq \sqrt{2}\delta_0$, which is a contradiction since we have assumed that  $u(t)\in C= \{ f \in \dot{H}^{s_{k}}(\R) \  \vert \ \delta_{0} \leq \| f \|_{\dot{H}^{s_{k}}} \leq (3\sqrt{2}/4) \delta_{0} \}$ for all $t\in [0,T^{\ast})$. Therefore, the profile $U^{j_0}$ obtained above is the only blowing up nonlinear profile.

Now, considering the sequence of intervals $I_n=[-t_n,0]$ and applying the same ideas we obtain
$$
\lim_{n\rightarrow \infty}\|U^{j_0}\|_{L_x^{5k/4}L^{5k/2}_{\bar{I}^{j_0}_n}}=\infty,
$$
where $\bar{I}^{j_0}_n=\left[-\frac{t_n}{(h_n^{j_0})^3}-\frac{t^{j_0}_n}{(h_n^{j_0})^3}, -\frac{t^{j_0}_n}{(h_n^{j_0})^3} \right)$.

Therefore, $t^{j_0} \neq -\infty$ and without loss of generality we can assume $t^{j_0} =0$. Thus, $U^{j_0}$ is a solution of the gKdV equation \eqref{gkdv} with initial data $\psi^{j_0}$ which blows up in positive and negative times. Moreover, denoting by $T^{\ast}_{j_0}>0$ the positive finite blow-up time of $U^{j_0}$, the limit \eqref{Unblow2} implies
\begin{equation}\label{Blowuptime}
\lim_{n\rightarrow \infty}\frac{T^{\ast}-t_n}{(h_n^{j_0})^3}\geq T^{\ast}_{j_0}.
\end{equation}

Returning to the linear profile decomposition for the sequence $\{u(\cdot,t_n)\}_{n\in \N}$, relation \eqref{SUM} and the change of variables $x=h_n^{j_0}y+x_n^{j_0}$ and $t=(h_n^{j_0})^3s+t_n^{j_0}$, imply that for every $J>j_0$
\begin{equation*}
(h_n^{j_0})^{2/k}V((h_n^{j_0})^3s+t_n^{j_0})u(h_n^{j_0}y+x_n^{j_0},t_n)=V\left(s\right)\psi^{j_0}\left(y\right)+\sum_{j\neq j_0}^{J}\widetilde{V}_n^j(y,s) + \widetilde{R}_n^J(y,s),
\end{equation*}
where
$$
\widetilde{V}_n^j(y,s)=\left(\dfrac{h_n^{j_0}}{h_n^j}\right)^{2/k}V\left(\left(\dfrac{h_n^{j_0}}{h_n^j}\right)^{3}s+\dfrac{t_n^{j_0}-t_n^j}{(h_n^j)^{3}}\right)\psi^j\left(\dfrac{h_n^{j_0}}{h_n^j}y+\dfrac{x_n^{j_0}-x_n^j}{h_n^j}\right)
$$
and 
$$
\widetilde{R}_n^J(y,s)=(h_n^{j_0})^{2/k}V((h_n^{j_0})^3s+t_n^{j_0})R_n^J(h_n^{j_0}y+x_n^{j_0}).
$$
Using the orthogonality of $(h_n^{j_0}, x_n^{j_0}, t_n^{j_0})_{n\in \N}$ and $(h_n^j, x_n^j, t_n^j)_{n\in \N}$ for $j\neq j_0$ it is easy to see that 
$$
\widetilde{V}_n^j \rightharpoonup 0 \textrm{ weakly in } L_x^{5k/4}L^{5k/2}_{t}.
$$
Moreover, since 
$$
\|\widetilde{R}_n^J\|_{L_x^{5k/4}L^{5k/2}_{t}}=\|V(t){R}_n^J\|_{L_x^{5k/4}L^{5k/2}_{t}}\leq c \|R_n^J\|_{\dot{H}^{s_k}}\leq c
$$
there exists $R^J \in \dot{H}^{s_k}(\R)$ such that
$$
\widetilde{R}_n^J \rightharpoonup R^J \textrm{ weakly in } L_x^{5k/4}L^{5k/2}_{t}.
$$
Therefore, for every $J>j_0$
$$
(h_n^{j_0})^{2/k}V((h_n^{j_0})^3s+t_n^{j_0})u(h_n^{j_0}y+x_n^{j_0},t_n)\rightharpoonup V\left(s\right)\psi^{j_0}\left(y\right)+R^J(y,s)
$$
weakly in $L_x^{5k/4}L^{5k/2}_{t}$.

On the other hand, 
$$
\|{R}^J\|_{L_x^{5k/4}L^{5k/2}_{t}}\leq \limsup_{n\rightarrow \infty} \|\widetilde{R}_n^J\|_{L_x^{5k/4}L^{5k/2}_{t}}=\|V(t){R}_n^J\|_{L_x^{5k/4}L^{5k/2}_{t}}\rightarrow 0,
$$
as $J\rightarrow \infty$. Therefore, by the uniqueness of the weak limit we conclude $R^J=0$ for every $J>j_0$ and
$$
(h_n^{j_0})^{2/k}V((h_n^{j_0})^3s+t_n^{j_0})u(h_n^{j_0}y+x_n^{j_0},t_n)\rightharpoonup V\left(s\right)\psi^{j_0}\left(y\right)\textrm{ weakly in } L_x^{5k/4}L^{5k/2}_{t}.
$$
A simple computation reveals
$$
(h_n^{j_0})^{2/k}V((h_n^{j_0})^3s+t_n^{j_0})u(h_n^{j_0}y+x_n^{j_0},t_n)=V(s)V\left(\frac{t^{j_0}_n}{(h_n^{j_0})^3}\right)f_n(y),
$$
where $f_n(y)=(h_n^{j_0})^{2/k}u(h_n^{j_0}y+x_n^{j_0},t_n)$.

At this point, we will make use of the following result.

\begin{lemma}\label{Vweak}
Let $\{\phi_n\}_{n\in \N}$ and $\phi$ be in $\dot{H}^{s_k}(\R)$. The following statements are equivalent:
\begin{itemize}
\item[$(i)$] $ \phi_n \rightharpoonup \phi \textrm{ weakly in } \dot{H}^{s_k}(\R).$
\item[$(ii)$] $ V(t)\phi_n \rightharpoonup V(t)\phi \textrm{ weakly in } L_x^{5k/4}L^{5k/2}_{t}.$
\end{itemize}
\end{lemma}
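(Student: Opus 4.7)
The strategy is to establish the two implications separately: $(i) \Rightarrow (ii)$ is a direct consequence of the Strichartz estimate, while $(ii) \Rightarrow (i)$ is handled by extracting a weakly convergent subsequence in the preimage and invoking uniqueness of weak limits together with the forward direction.

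For $(i) \Rightarrow (ii)$, I would invoke Lemma \ref{lemma1}(i), which gives that the Airy propagator defines a bounded linear operator $V:\dot{H}^{s_k}(\R)\to L_x^{5k/4}L_t^{5k/2}$. Any such bounded linear operator between Banach spaces is weak-to-weak continuous, and hence $\phi_n\rightharpoonup\phi$ in $\dot{H}^{s_k}(\R)$ immediately yields $V(t)\phi_n\rightharpoonup V(t)\phi$ in $L_x^{5k/4}L_t^{5k/2}$.

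For the reverse implication $(ii)\Rightarrow(i)$, I would argue as follows. In the setting in which the lemma is used, the sequence $\{\phi_n\}$ is bounded in $\dot{H}^{s_k}(\R)$ (the relevant $f_n$ arise from a rescaling of $u(t_n)$, and the scaling identity combined with the type II hypothesis \eqref{TypeII1} provides the bound). Granting such boundedness, reflexivity of $\dot{H}^{s_k}(\R)$ permits us to extract from any given subsequence a further subsequence $\phi_{n_k}\rightharpoonup\widetilde{\phi}$ in $\dot{H}^{s_k}(\R)$. Applying the forward direction already proved, $V(t)\phi_{n_k}\rightharpoonup V(t)\widetilde{\phi}$ in $L_x^{5k/4}L_t^{5k/2}$, while by hypothesis $V(t)\phi_{n_k}\rightharpoonup V(t)\phi$ in the same space. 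Uniqueness of weak limits forces $V(t)\widetilde{\phi}=V(t)\phi$, and invertibility of $V(t)$ as a unitary on $\dot{H}^{s_k}(\R)$ (see \eqref{HSP}) gives $\widetilde{\phi}=\phi$. Since every weakly convergent subsequence of $\{\phi_n\}$ must have limit $\phi$, Urysohn's subsequence principle delivers $\phi_n\rightharpoonup\phi$ in $\dot{H}^{s_k}(\R)$.

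The only delicate point is securing boundedness of $\{\phi_n\}$ in $\dot{H}^{s_k}(\R)$ from the hypothesis alone: the Strichartz estimate controls the Strichartz norm by the $\dot{H}^{s_k}$ norm but not conversely, so mere weak convergence of $\{V(t)\phi_n\}$ in $L_x^{5k/4}L_t^{5k/2}$ (which via Banach--Steinhaus supplies only boundedness of the images in the Strichartz norm) does not a priori yield boundedness of $\{\phi_n\}$ in $\dot{H}^{s_k}(\R)$. This is the main structural obstacle; it is bypassed in every application in this paper since boundedness of $\{\phi_n\}$ comes for free from the type II assumption and scaling invariance.
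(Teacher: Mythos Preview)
Your forward direction $(i)\Rightarrow(ii)$ is correct and is in fact the abstract form of what the paper does: the paper writes the pairing $\int\int V(t)\phi_n\cdot\psi\,dx\,dt$ as an $\dot H^{s_k}$ pairing of $\phi_n$ against $D_x^{-s_k}\int V(t)\psi\,dt$, and then verifies via duality and $TT^\ast$ from Lemma~\ref{lemma1}(i) that this last function lies in $L^2$. This is exactly the statement that $V$ is bounded $\dot H^{s_k}\to L_x^{5k/4}L_t^{5k/2}$, hence weak-to-weak continuous.

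For $(ii)\Rightarrow(i)$ your argument is genuinely different, and the gap you flag is real: your subsequence strategy needs $\{\phi_n\}$ bounded in $\dot H^{s_k}$, which does not follow from weak convergence of $\{V(t)\phi_n\}$ in the Strichartz space. The paper bypasses this entirely with a direct argument. Given $\psi\in\mathcal S(\R)$, set
\[
F(x,t)=\chi_{[-1,1]}(t)\,V(t)D_x^{2s_k}\psi(x),
\]
which lies in the dual space $L_x^{5k/(5k-4)}L_t^{5k/(5k-2)}$ (the time cutoff localizes $t$, and $V(t)D_x^{2s_k}\psi$ has enough integrability in $x$). A short computation using unitarity of $V(t)$ gives
\[
\int_{\R}\int_{\R} V(t)\phi_n(x)\,F(x,t)\,dx\,dt \;=\; 2\int_{\R} D_x^{s_k}\phi_n(x)\,D_x^{s_k}\psi(x)\,dx,
\]
so weak convergence of $V(t)\phi_n$ tested against $F$ recovers the $\dot H^{s_k}$ pairing of $\phi_n$ against $\psi$ directly. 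No a priori $\dot H^{s_k}$ bound on $\phi_n$ is needed. This proves the lemma as stated, not merely under the extra hypothesis that suffices for the application.
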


\begin{proof}[Proof of Lemma \ref{Vweak}]
Our argument follows that of Lemma 3.63 in \cite{MV1998}.
First suppose that $V(t) \phi_{n} \rightharpoonup V(t) \phi$ weakly in $L^{5k/4}_{x} L^{5k/2}_{t}$, let $\chi_{[-1,1]}$ be the characteristic function of the unit interval, and let $\psi \in \mathcal{S}(\R)$.
Set $$F(x,t) = \chi_{[-1,1]}(t) V(t) D^{2s_{k}}_{x} \psi(x).$$ Then an easy calculation reveals
\begin{equation*}
\int_{-\infty}^{\infty} \int_{-\infty}^{\infty} \Big ( V(t) \phi_{n}(x) \Big ) F(x,t) dx dt = 2 \int_{-\infty}^{\infty} \Big ( D^{s_{k}}_{x}\phi_{n}(x) \Big ) \Big ( D^{s_{k}}_{x} \psi(x) \Big ) dx.
\end{equation*}
It follows that $D^{s_{k}}_{x} \phi_{n} \rightharpoonup D^{s_{k}}_{x} \phi$ weakly in $L^{2}(\R)$, as required.

To prove the converse, we suppose that $\psi \in L^{5k/(5k-4)}_{x} L^{5k/(5k-2)}_{t}$ and observe that
\begin{align*}
\int_{-\infty}^{\infty} \int_{-\infty}^{\infty} \Big ( V(t) \phi_{n}(x) \Big ) \psi(x,t) dx dt &=\!\! \int_{-\infty}^{\infty} \phi_{n}(x) \left ( \int_{\R} V(t) \psi(x,t) dt \right ) dx\\
&=\!\! \int_{-\infty}^{\infty} \Big ( D^{s_{k}}_{x} \phi_{n}(x) \Big ) \left ( D^{-s_{k}}_{x} \int_{\R} V(t) \psi(x,t) dt \right ) dx.
\end{align*}
The result follows once we show that
\begin{equation*}
\left \| D^{-s_{k}}_{x} \int_{-\infty}^{\infty} V(t) \psi(\cdot,t) dt \right \|_{L^{2}} \leq c \| \psi \|_{L^{5k/(5k-4)}_{x} L^{5k/(5k-2)}_{t}}.
\end{equation*}
Indeed, by duality and a $TT^{\ast}$ argument we deduce from Lemma \ref{lemma1} $(i)$
\begin{equation}
\label{idualestimate}
\left \| \int_{-\infty}^{\infty} D^{-2s_{k}}_{x} V(t-s) \psi(\cdot,t) ds \right \|_{L^{5k/4}_{x} L^{5k/2}_{t}} \leq c \| \psi \|_{L^{5k/(5k-4)}_{x} L^{5k/(5k-2)}_{t}}.
\end{equation}
Finally, the desired estimate then follows from \eqref{idualestimate} by way of duality and a $TT^{\ast}$ argument.
\end{proof}

Using the previous lemma and the fact that $t^{j_0}=\lim_{n\rightarrow \infty} -\frac{t^{j_0}_n}{(h_n^{j_0})^3}=0$ we finally conclude
$$
(h_n^{j_0})^{2/k}u(h_n^{j_0}\cdot+x_n^{j_0},t_n)\rightharpoonup \psi^{j_0}\left(\cdot\right)\textrm{ weakly in } \dot{H}^{s_k}(\R).
$$
where $\|\psi^{j_0}\|_{\dot{H}^{s_k}}\geq \delta_0$ and $\{h_n^{j_0}\}_{n\in \N}$ satisfies \eqref{Blowuptime}.
Therefore, for every $R>0$ it follows that
$$
\lim_{n\rightarrow \infty} (h_n^{j_0})^{2s_k+4/k}\int_{|x|\leq R}|D^{s_k}_xu(h_n^{j_0}x+x_n^{j_0},t_n)|^2dx\geq \int_{|x|\leq R}|D^{s_k}_x\psi^{j_0}(x)|^2dx.
$$
The change of variables $y=h_n^{j_0}x+x_n^{j_0}$ yields
$$
\lim_{n\rightarrow \infty} \left(\sup_{z\in \R} \int_{|x-z|\leq h_n^{j_0}R}|D^{s_k}_xu(x,t_n)|^2dx\right)\geq \int_{|x|\leq R}|D^{s_k}_x\psi^{j_0}(x)|^2dx.
$$
Taking $\lambda(t)$ such that $\lambda(t)^{-1} (T^{\ast}-t)^{1/3}\rightarrow 0$ as $t\rightarrow T^{\ast}$, from the relation \eqref{Blowuptime} we deduce $\lambda(t_n)^{-1} h_n^{j_0} \rightarrow 0$ as $n\rightarrow \infty$, which implies 
$$
\lim_{n\rightarrow \infty} \left(\sup_{z\in \R} \int_{|x-z|\leq \lambda(t_n)}|D^{s_k}_xu(x,t_n)|^2dx\right)\geq \int_{|x|\leq R}|D^{s_k}_x\psi^{j_0}(x)|^2dx
$$
for every $R>0$. 
Since $\|\psi^{j_0}\|_{\dot{H}^{s_k}}\geq \delta_0$, we deduce
$$
\liminf_{t\rightarrow T^{\ast}} \left(\sup_{z\in \R} \int_{|x-z|\leq \lambda(t)}|D^{s_k}_xu(x,t_n)|^2dx\right)\geq \delta^2_0.
$$
Finally, using a continuity argument, we can find a function $x(t)\in \R$ such that \eqref{Concentration2} holds.
\end{proof}

%

\bibliographystyle{mrl}

\end{document}